\documentclass{amsart}

\usepackage[letterpaper,margin=1.2in]{geometry}

\usepackage{amsmath,amssymb,amsthm}
\usepackage{MnSymbol}
\usepackage{enumerate}
\usepackage{hyperref}
\usepackage{mathtools}
\usepackage{bbm}
\usepackage{xcolor}
\usepackage{tikz}
\usepackage{tikz-cd}

\usepackage{comment}

\newtheorem{theorem}{Theorem}[section]
\newtheorem{lemma}[theorem]{Lemma}
\newtheorem{proposition}[theorem]{Proposition}
\newtheorem{corollary}[theorem]{Corollary}

\newtheorem*{theorem*}{Theorem}

\numberwithin{equation}{section}
\numberwithin{figure}{section}

\theoremstyle{definition}
\newtheorem{observation}[theorem]{Observation}
\newtheorem{definition}[theorem]{Definition}
\newtheorem{notation}[theorem]{Notation}
\newtheorem{assumption}[theorem]{Assumption}

\theoremstyle{remark}
\newtheorem{remark}[theorem]{Remark}
\newtheorem{example}[theorem]{Example}

\newcommand\R{\mathbb{R}}
\newcommand\C{\mathbb{C}}

\newcommand\N{\mathbb{N}}

\DeclareMathOperator{\id}{id}

\DeclareMathOperator{\re}{Re}

\DeclareMathOperator{\Tr}{Tr}
\DeclareMathOperator{\tr}{tr}

\DeclareMathOperator{\sa}{sa}

\DeclareMathOperator{\Lip}{Lip}
\DeclareMathOperator{\NC}{NC}

\DeclarePairedDelimiter{\norm}{\lVert}{\rVert}
\DeclarePairedDelimiter{\ip}{\langle}{\rangle}

\begin{document}
	
	\title{On counterexamples to the unification of free entropy}
	\author{David Jekel}
	
	\address{Department of Mathematics and Statistics, University of Ottawa, 150 Louis-Pasteur Pvt, Ottawa, ON, K1N 6N5, Canada}
	\email{djekel@uottawa.ca}
	
	\maketitle
	
	\begin{abstract}
		It has been a long-standing open problem to determine when two versions of Voiculescu's free entropy, $\chi$ and $\chi^*$, agree.  OpenAI recently generated the first example with $-\infty < \chi < \chi^*$.  We give a detailed presentation of a similar example, and explicitly show the separation between free Fisher information and the large-$n$ limit of classical Fisher information through the relationship with Wasserstein distance.  We thus situate the example in the context of probability theory and information geometry.
	\end{abstract}
	
	\section{Introduction}
	
	\subsection{Historical motivation}
	
	Voiculescu's work on free probability provided a wonderful link between operator algebras and random matrix theory \cite{Voiculescu1991}.  A von Neumann algebra $M$ with an appropriate tracial state $\tau$ can be viewed as a noncommutative analog of $L^\infty(\Omega,\mathbb{P})$ for a probability space $(\Omega,\mathbb{P})$ with $\mathbb{E}$ as the analog of the expectation.  The behavior of free products of von Neumann algebras is described by free independence, just as tensor products are described by classical independence.  Voiculescu showed that many random $n \times n$ matrix models become freely independent in the large-$n$ limit, which enabled surprising applications to operator algebras (e.g. \cite{VoiculescuFE3}, \cite{Hayes2022PT}).
	
	Free probability has developed many analogs of concepts from classical probability.  In particular, a series of papers investigated analogs of the differential entropy $h(\mu)$ of a probability measure.  While free entropy can be defined explicitly in the case of a single self-adjoint variable using logarithmic energy \cite{VoiculescuFE1}, the multivariate setting is much more challenging since there is not a straighforward analog of the probability density of some tuple $(X_1,\dots,X_m)$ of noncommutative random variables.  The \emph{microstates approach} of \cite{VoiculescuFE2} follows Boltzmann's mantra that the entropy of a macrostate is the log of the number of microstates; $\chi$ is thus defined as the $\limsup$ of normalized log-volumes of spaces of matrix approximations.  Another convenient characterization is that $\chi$ is the largest possible limit of normalized classical entropies of random matrix models for the given law \cite[Appendix B]{ST2022fractional}.
	
	Meanwhile, another version of free entropy $\chi^*$ is defined via the relationship between entropy and Fisher information \cite{VoiculescuFE5}.  It is well known in classical probability that if $X$ is a random variable in $\R^m$ and $Z$ is an independent standard Gaussian, then $(d/dt) h(X + t^{1/2}Z)$ is given by half the $L^2$-norm of the score function (i.e.\ the Fisher information).  In the free setting, score functions (also known as \emph{conjugate variables}) cannot be expressed directly since there is no density, but they can be defined via an integration-by-parts relation.  Voiculescu thus defined $\chi^*$ using the integral of the free Fisher information of $X + t^{1/2} Z$ where $Z$ is a freely independent semicircular family.
	
	It remained an open problem for a long time whether $\chi = \chi^*$ in general.  The first major progress was the inequality $\chi \leq \chi^*$ due to Biane, Capitaine, and Guionnet \cite{BCG2003}.  This follows from a large-deviations upper bound for free Brownian motion.  The quantity $\chi$ fits naturally with large deviations theory, as it is defined as the limiting exponential growth rate of probabilities of events involving the noncommutative moments.  Thus, a complete large deviations theory for the Gaussian unitary ensemble amounts to showing that the $\limsup$ and $\liminf$ versions of $\chi$ agree and describing them using some analytically tractable rate function (for instance, $\chi^*$).  Biane, Capitaine, and Guionnet used an important technique in large deviations theory, namely evaluating the rate function as the solution of a stochastic optimization problem, an insight that was powerfully expressed for classical Brownian motion in the formula of Bou{\'e} and Dupuis \cite{BoueDupuis1998variational} around the same time.
	
	The relationship between $\chi$ and $\chi^*$ is well understood in the setting of free Gibbs laws  \cite{GMS2006,GS2009,JLS2022,GMS2026}, which are analogs of measures with density $e^{-V}$, provided that the potential is convex.  These convex free Gibbs laws are thus analogs of log-concave measures; this gives us access to powerful tools such as Langevin diffusion, the log-Sobolev and Talagrand inequalities, and concentration of measure (see \cite{OttoVillani2000} \cite{AGZ2009}).  Dabrowski \cite{Dabrowski2017Laplace} showed that $\chi(X) = \chi^*(X)$ when $X$ is an $m$-tuple associated to a convex potential $V$ with suitable hypotheses; I gave another similar result in \cite{JekelEntropy}.  Beyond the convex setting, the only potentials where significant progress has been made are the \emph{trapping} potentials of \cite{GMS2026}; an explicit asymptotic expansion of the free entropy in powers of $n$ was given by Parraud and Schnelli \cite{ParraudSchnelli2025}, although the problem of showing $\chi = \chi^*$ in this case has not been explicitly addressed.
	
	The first negative result on $\chi$ versus $\chi^*$ is a corollary of the resolution of the Connes embedding problem in \cite{JNVWY2020}.  Namely, there exist noncommutative random variables $(X_1,\dots,X_m)$ which are not \emph{Connes-embeddable}, i.e.\ they do not admit \emph{any} matrix approximations, and therefore have $\chi(X) = -\infty$ trivially.  They can also be arranged to satisfy $\chi^*(X) > -\infty$ using a free semicircular perturbation.  While strictly speaking this does not say anything about whether $\chi = \chi^*$ for all Connes-embeddable noncommutative random variables, the complexity-theoretic content does perhaps give a hint that free entropy could be very badly behaved.  Indeed, \cite{JNVWY2020} shows that various optimization problems involving families of projections cannot be solved algorithmically.  From this, it is plausible that stochastic optimization problems related to large deviations theory might also behave wildly in the large-$n$ limit.
	
	Further hints come from the study of optimal transport in the free setting.  Entropy resides naturally in the habitat of Wasserstein geometry, which looks at the space of probability measures on $\R^m$ (or more general spaces) equipped with the $L^2$-Wasserstein distance coming from optimal transport theory.  The Wasserstein distance can also be viewed as the geodesic distance associated to a kind of Riemannian structure on $\mathbb{P}(\R^m)$, while various functional inequalities relating entropy and Wasserstein distance can be viewed as statements about convexity or concavity of entropy near a point in the Wasserstein space.  Jordan, Kinderleher, and Otto \cite{JKO1998,Otto2001} showed that the gradient flow of entropy on the Wasserstein space is given by the heat evolution of the measure; here the heat evolution $\mu_t$ of an initial measure $\mu$ refers the distribution of $X + t^{1/2} Z$ where $X \sim \mu$ and $Z$ is an independent standard Gaussian.  The score function is exactly the Wasserstein gradient of entropy, which is why the Fisher information describes the rate of change of entropy along this flow.  For more background on Wasserstein gradient flows, see \cite{AGS2008gradient,BGL2014}.

	In the free probability setting, Biane and Voiculescu defined an analog of Wasserstein distance \cite{BV2001}.  Subsequently free analogs of the Gaussian Talagrand inequality were proved in \cite{HPU2004,HU2006}.  In the study of free Gibbs laws, analogs of optimal transport and diffusion constructions were used to prove free transport results \cite{GS2014,JekelExpectation,DGS2021,JLS2022} which established a smooth change of variables between the original tuple and a free semicircular family (and in particular isomorphism of the associated von Neumann and $\mathrm{C}^*$-algebras).
	
	The joint work \cite{GJNS2021} undertook a more systematic study of optimal transport in the noncommutative setting, including an analog of Monge-Kantorovich duality.  At the same time, it showed several new subtleties in the noncommutative setting.  In particular, there exist matrix tuples for which an optimal coupling in the sense of Biane and Voiculescu \cite{BV2001} can only exist in a non-Connes-embeddable von Neumann algebra.  Hence, it may be impossible to realize the abstract free Wasserstein distance by any matrix approximations at all.
	
	This was one of the motivations for my approach in \cite{JekelModelEntropy} of integrating continuous model theory with free probability.  It seemed that the noncommutative law giving the moments of the tuple $(X_1,\dots,X_m)$ did not provide enough information about $X$ for entropy and Wasserstein distance to have the correct relationship.  Indeed, the value of optimization problems such as $\inf_Y \re \tr(p(X,Y))$ is not uniquely determined by the law of $X$, which in model-theoretic terms is exactly the fact that tracial von Neumann algebras usually do not admit quantifier elimination \cite{Farah2024,FJP2026}.  However, we could try to replace the noncommutative law of $X$ by the full \emph{type} of $X$ which captures the value of optimization problems with iterated suprema and infima (which are the quantifiers in continuous model theory).  This led to a Monge--Kantorovich duality for types in \cite{JekelTypeCoupling}, an analysis relating entropy and Wasserstein distance for types in \cite{Jekel2025InfoGeom}, and the Wasserstein gradient flow of entropy in the metric sense in \cite{Jekel2026FIG}.
	
	In \cite[\S 4.4]{Jekel2025InfoGeom}, I gave an example showing another obstruction to na{\"i}ve conjectures about free Wasserstein distance and entropy.  Namely, there are some noncommutative variables $X = (X_1,X_2,X_3)$ and $Y = (Y_1,Y_2,Y_3)$ such that there do not exist any random matrix models $X^{(n)}$ and $Y^{(n)}$ satisfying \emph{both} that classical entropy of $X^{(n)}$ converges to $\chi(X)$ \emph{and} that the classical Wasserstein distance converges to the free Wasserstein distance restricted to Connes-embeddable couplings.  Hence, using only the noncommutative law rather than the type, it is not possible to study free information geometry directly as the limit of classical information geometry on the matrix space.  This gives an even stronger hint that the Wasserstein gradient flow of the microstate entropy $\chi$ might not be adequately described using only the noncommutative law (or in particular, in terms of $\chi^*$).
	
	OpenAI recently posted the first known counterexample where $-\infty < \chi < \chi^*$ \cite{OpenAI2026entropy}, conclusively dashing the original hope that $\chi$ might agree with $\chi^*$ in a sufficiently general setting and give a simple description of the large deviations problem for random matrices.

	\subsection{Main results}
	
	This paper discusses a variant of the example given by \cite{OpenAI2026entropy}, with the following setup.  Note our convention of using $e^{-t/2} X + (1 - e^{-t})^{1/2}S$ rather than $X + t^{1/2}S$ is based on the gradient flow for the entropy relative to the Gaussian measure, rather than the Lebesgue measure.  These are related by a simple change of variables, but the Gaussian convention is convenient for some purposes because the variance does not grow as $t \to \infty$.
	
	\begin{assumption} \label{ass: main example setup}
		Fix $m \geq 2$.  Let $X = (X_1,\dots,X_{m+1})$ where $(X_1,\dots,X_m)$ is a standard free semicircular family, $X_{m+1}$ is a standard semicircular operator, and $(X_1,\dots,X_m)$ and $X_{m+1}$ are tensor independent.  Let $S = (S_1,\dots,S_{m+1})$ be a standard semicircular family freely independent from $X$, and set
		\begin{equation} \label{eq: main construction of X t}
			X(t) = e^{-t/2} X + (1 - e^{-t})^{1/2} S.
		\end{equation}
	\end{assumption}
	
	\begin{remark}
	The proof of OpenAI takes $X_{m+1}$, instead of a semicircular, to be an operator with discrete spectrum evenly distributed on equally spaced points in $[-1,1]$.  The spectral projections of $X_{m+1}$ are then used to exhibit a block structure for $X_1$, \dots, $X_m$ since $X_{m+1}$ commutes with them.  However, it is reasonable to replace $X_{m+1}$ with an operator with diffuse spectral measure, such as a semicircular operator, since from it one can form any desired number of spectral projections to produce a block structure.
	\end{remark}
	
	\begin{remark}
	The example above also has some similarity with \cite[\S 4.4]{Jekel2025InfoGeom}, which looked a $3$-tuple $(X_1,X_2,X_3)$ where $X_1$ and $X_2$ commute with $X_3$, and $X_3$ has variance $\varepsilon$.  In both cases, the last variable contributes very little to the variance or the Wasserstein distance compared to the size of the overall tuple; in the instance of \cite{Jekel2025InfoGeom}, this is arranged by making the variance of the last variable $\varepsilon$, and in the case of \cite{OpenAI2026entropy}, it is arranged by making the number of variables $X_1$, \dots, $X_m$ large, so that the total variance $\sqrt{m}$ is much larger than the contribution of $X_{m+1}$.  Despite having a small contribution to the variance, the last variable still has a very large effect on the behavior of the tuple overall; indeed, the commutation and resulting block structure affect all the previous coordinates $X_1$, \dots, $X_m$ in a way that is not detectable from the distribution of $(X_1,\dots,X_m)$ alone, which is simply free semicircular.
	\end{remark}
	
	We will then prove the following variant of \cite[Theorem 1.1]{OpenAI2026entropy}.
	
	\begin{theorem} \label{thm: main entropy}
		Under Assumption \ref{ass: main example setup}, let $m$ be sufficiently large\footnote{According to Wolfram Alpha, this threshold for $m$ is reached when $m \approx 5.4694 \times 10^{15}$.} that
		\begin{equation} \label{eq: main condition on m}
			\frac{1}{4} (m+1)^{1/8} \geq 17 + \sqrt{2 + \log(m+1)},
		\end{equation}
		and let
		\begin{equation} \label{eq: main condition on t}
			0 < t < \frac{1}{(m+1)^3}.
		\end{equation}
		Then
		\begin{equation} \label{eq: main conclusion}
		-\infty < \chi(X(t)) < \chi^*(X(t)).
		\end{equation}
	\end{theorem}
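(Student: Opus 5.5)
The plan has three parts: a finite lower bound for $\chi(X(t))$ from explicit random matrix models, a lower bound for $\chi^*(X(t))$ from Fisher information estimates, and an upper bound for $\chi(X(t))$ that exploits the block structure forced by $X_{m+1}$. The three bounds share the same leading term $\frac{m+1}{2}\log(1-e^{-t})$, so the whole proof is a comparison of constants and lower-order terms.

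\emph{Step 1: finiteness.} To show $\chi(X(t)) > -\infty$, I will use the characterization of $\chi$ as the largest limit of normalized classical entropies of random matrix models (\cite[Appendix B]{ST2022fractional}). Since $X$ is Connes-embeddable, take deterministic microstates $x^{(n)} = (A^{(n)}_1 \otimes I, \dots, A^{(n)}_m \otimes I, I \otimes B^{(n)})$, built from tensor products of matrix approximations of the free semicircular family and of a semicircular. Set $Y^{(n)} = e^{-t/2} x^{(n)} + (1-e^{-t})^{1/2} G^{(n)}$, where $G^{(n)}$ is an independent GUE tuple. Asymptotic freeness of GUE from deterministic matrices shows that $Y^{(n)}$ converges in law to $X(t)$. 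Classically $h(x + G) \geq h(G)$ for independent summands, so the normalized entropy is at least $\frac{m+1}{2}\log(2\pi e(1-e^{-t}))$ up to the standard normalization. This gives $\chi(X(t)) \geq \frac{m+1}{2}\log(1-e^{-t}) + C_0$.

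\emph{Step 2: lower bound for $\chi^*$.} Write $\chi^*(X(t))$ as an integral of the free Fisher information along the Ornstein--Uhlenbeck flow. The conjugate variables of $X(s)$ are $(1-e^{-s})^{-1/2}E[S \mid X(s)]$ corrected by the Gaussian drift. This yields both $\Phi^*(X(s)) \leq (m+1)/(1-e^{-s})$ and a sharper estimate for the first $m$ coordinates, because each $X_i(s)$ for $i \leq m$ is itself semicircular and free from the others. From these I will obtain $\chi^*(X(t)) \geq \frac{m+1}{2}\log(1-e^{-t}) + C_1$, with an explicit constant $C_1$.

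\emph{Step 3: upper bound for $\chi$ (the main obstacle).} I must show that microstates of $X(t)$ have strictly smaller volume than $\chi^*$ predicts, by a margin that beats the error terms; this is where conditions \eqref{eq: main condition on m} and \eqref{eq: main condition on t} should enter.
\begin{enumerate}[(a)]
\item A microstate $(y_1,\dots,y_{m+1})$ for $X(t)$ lies within normalized Hilbert--Schmidt distance about $\sqrt{(m+1)(1-e^{-t})}$ of a tuple $x$ in which $x_1,\dots,x_m$ approximately commute with $x_{m+1}$.
\item Cutting the spectrum of $x_{m+1}$ into $k$ spectral projections, as in the remark after Assumption~\ref{ass: main example setup}, forces each $x_i$ to be nearly block diagonal. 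The set of such $x$ therefore has metric entropy, at scale $\delta$, of dimension only about $n^2 m/k$ in the first $m$ coordinates rather than $n^2 m$.
\item A covering argument then bounds the log-volume of the $\sqrt{t}$-neighborhood. Off the block-diagonal directions, the Gaussian part alone must account for the volume, and the log-concavity of the Gaussian shows it cannot fill the region at full efficiency.
\end{enumerate}
Granted (a)--(c), I expect $\chi(X(t)) \leq \frac{m+1}{2}\log(1-e^{-t}) + C_1 - c\,m^{1/8} + O\bigl(\sqrt{\log(m+1)}\bigr)$. The constants $17$ and $(m+1)^{1/8}/4$ in \eqref{eq: main condition on m} suggest this balance between the block count $k$ and concentration errors, with $k$ chosen as a small power of $m$.

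The delicate points in Step 3 are controlling near-commutation quantitatively (a Lin-type or spectral-projection perturbation estimate in Hilbert--Schmidt norm) and choosing $k$ and $\delta$ against $t < (m+1)^{-3}$. I would first try to work with the classical entropy of an arbitrary random matrix model $Y^{(n)}$ converging to $X(t)$. The idea is to decompose $h(Y^{(n)})$ via conditioning on $Y^{(n)}_{m+1}$, then bound the conditional entropy of the off-block parts of $Y^{(n)}_1,\dots,Y^{(n)}_m$ by Gaussian entropy with variance $O(t)$ plus a transport error. Comparing this with Step 2 gives the strict inequality in \eqref{eq: main conclusion}.
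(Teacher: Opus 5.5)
There is a genuine gap, and it already appears in Step~2. The only estimate you actually justify there is the Stam-type bound $\Phi^*(X(s))\le (m+1)/(1-e^{-s})$, i.e.\ $\Phi_g^*(X(s))\le (m+1)e^{-s}/(1-e^{-s})$. Integrating it via Lemma~\ref{lem: entropy conversion 2} gives exactly $\chi^*(X(t))\ge\frac{m+1}{2}\log(2\pi e(1-e^{-t}))$. That is the same Gaussian-noise bound you obtain for $\chi(X(t))$ in Step~1. So, with what you have, $C_1=C_0$, and the inequality targeted in Step~3 would contradict Step~1.

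The ``sharper estimate for the first $m$ coordinates'' does not follow from the reason you give. The variables $X_1(s),\dots,X_m(s)$ are free from one another but not from $X_{m+1}(s)$. The conjugate variable $\xi_i$ for the full $(m+1)$-tuple is not determined by the law of the subfamily. Its conditional expectation onto $\mathrm{W}^*(X_1(s),\dots,X_m(s))$ equals $X_i(s)$, which only gives $\norm{\xi_i}_2\ge 1$, a bound in the wrong direction.

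The missing idea is what actually makes $\chi^*$ large here. The cumulants of $X$ of order at most $3$ coincide with those of a semicircular family. Expanding $Y_i$ in higher-order conjugate variables (Propositions~\ref{prop: Stein identity}, \ref{prop: main conjugate variable bound}, \ref{prop: graph independent bound}) then yields $\Phi_g^*(X(s))\lesssim (m+1)^2e^{-4s}$ once $e^s$ exceeds a constant multiple of $1+m^{1/2}$. This is far faster than the $e^{-s}$ decay that a persistent block structure forces on matrix models.

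Step~3 is aimed at an unattainable target. A bound $\chi(X(t))\le\frac{m+1}{2}\log(1-e^{-t})+K$ with $K$ independent of $t$ cannot hold on all of $(0,(m+1)^{-3})$. Since $\chi(X(t))=\chi(X+\epsilon S)-\frac{(m+1)t}{2}$ with $\epsilon=(e^t-1)^{1/2}$, it would give $\chi(X+\epsilon S)\le(m+1)\log\epsilon+K$. Hence $\delta(X):=(m+1)+\limsup_{\epsilon\to0}\chi(X+\epsilon S)/|\log\epsilon|\le 0$. But $\mathrm{W}^*(X)$ is diffuse and Connes-embeddable, so Jung's lemma (together with $\delta\ge\delta_0$) gives $\delta(X)\ge1$. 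Your own covering count in (b) reflects this. Carried out, it bounds $\chi(X(t))$ by roughly $\frac{m(1-1/k)}{2}\log t$ plus a constant. That improves the coefficient of $\log t$, but it is much weaker than $\frac{m+1}{2}\log(1-e^{-t})$ for small $t$. Comparing absolute bounds would therefore require controlling both $\chi^*(X(t))$ and $\chi(X(t))$ as $t\to0$ to within the size of the separation, which nothing in your plan does.

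The paper needs no absolute upper bound on $\chi$ at all. Its argument runs as follows.
\begin{itemize}
\item It takes matrix models $Y^{(n)}(0)$ attaining $\chi(X(t))$ and runs the GUE Ornstein--Uhlenbeck flow.
\item It uses the $1$-Lipschitz block-detecting function $\psi_k^{(n)}$ to get $\liminf_n W_2(Y^{(n)}(s),S^{(n)})\gtrsim(m+1)^{1/2}e^{-s/2}-O(\sqrt{\log m})$ on a window of times $s$.
\item From the information-cost inequality it deduces $\liminf_n\mathcal{I}^{(n)}>\Phi^*$ on that window.
\item Proposition~\ref{prop: classical versus free entropy strict} converts this comparison of increments, together with $\chi\le\chi^*$ at the later time, into $\chi(X(t))<\chi^*(X(t))$.
\end{itemize}
Your Step~1 is fine; the paper simply cites Voiculescu for it.
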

	
	One of the facts from \cite{JekelPi2024} used in \cite{OpenAI2026entropy} is that the difference $\chi(X(t)) - \chi(X(t'))$ can be estimated using the large-$n$ $\liminf$ of classical Fisher information of random matrix models $X^{(n)}(t)$ for $X(t)$ obtained as $e^{-t/2} X^{(n)}(0) + (1 - e^{-t})^{1/2} S^{(n)}$ for a GUE matrix tuple $S^{(n)}$.
	This $\liminf$ of the classical Fisher information gives an upper bound for the free Fisher information $X(t)$.  Hence, separation of $\chi$ and $\chi^*$ must correspond to a separation between the free Fisher information $\Phi^*(X(t))$ and the $\liminf$ of the normalized classical Fisher information $\mathcal{I}^{(n)}(X^{(n)}(t))$.  However, the upper bound for $\chi$ was proved by OpenAI using volume estimation and the data processing inequality, and did not directly give a bound on the classical Fisher information of the random matrix models.  We aim here to exhibit the separation of Fisher information explicitly.
	
	To state this result carefully, we need start the process at some small positive time $t_0$ rather than at time zero.  The reason is that $X = X(0)$ has entropy $-\infty$ due to nontrivial commutation, and hence would not be a valid counterexample.  So the initial input needs to be a matrix model for $X(t_0)$ rather than $X(0)$.  We therefore set up the following notation and objects.
	
	\begin{assumption} \label{ass: setup for Y}
		Furthermore, fix $t_0 > 0$ and another free semicircular family $Z$ freely independent from $X$ and $S$, and let
		\begin{equation}
		Y(t) = e^{-t/2} X(t_0) + (1 - e^{-t})^{1/2} Z.
		\end{equation}
		Note that since $e^{-t/2} (1 - e^{-t_0})^{1/2} S + (1 - e^{-t})^{1/2} \tilde{S}$ is free from $X$ and has the same distribution as $(1 - e^{-t_0-t})^{1/2} S$, we have
		\begin{equation}
		Y(t) \sim X(t_0 + t).
		\end{equation}
	\end{assumption}
	
	\begin{assumption} \label{ass: matrix models for Y}
		Let $Y^{(n)}(0)$ be a sequence of random matrix $m+1$-tuples such that
		\begin{enumerate}[(1)]
			\item For all noncommutative polynomials $f$, we have $\tr_n[f(Y^{(n)}(0))] \to \tau[f(Y(0))]$ in probability.
			\item We have $\norm{Y^{(n)}(0)}_\infty \leq 3$.
		\end{enumerate}
		Let $Z^{(n)}$ be an $(m+1)$-tuple with the multivariate GUE distribution, independent from $Y^{(n)}(0)$, and set
		\begin{equation}
		Y^{(n)}(t) = e^{-t/2} Y^{(n)}(0) + (1 - e^{-t})^{1/2} Z^{(n)}.
		\end{equation}
	\end{assumption}
	
	Then our main result comparing classical and free Fisher information is the following.
	
	\begin{theorem} \label{thm: main Fisher}
		Under Assumptions \ref{ass: main example setup}, \ref{ass: setup for Y}, \ref{ass: matrix models for Y}, suppose that \eqref{eq: main condition on m} holds, and let
		\begin{equation}
			0 < t_0 \leq (m+1)^{-3}, \qquad t = 3 \log 2 + \log(1+m^{1/2}) + t' \text{ where } 0 \leq t' \leq \frac{1}{4} \log (m+1).
		\end{equation}
		Then
		\begin{equation} \label{eq: main Fisher}
		\liminf_{n \to \infty} \mathcal{I}^{(n)}(Y^{(n)}(t)) > \Phi^*(Y(t))
		\end{equation}
		where $\mathcal{I}^{(n)}$ is the normalized classical Fisher information (see Notation \ref{not: normalized entropy and Fisher}).
	\end{theorem}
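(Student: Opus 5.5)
We compare an upper bound for $\Phi^*(Y(t))$ with a lower bound for $\liminf_n \mathcal{I}^{(n)}(Y^{(n)}(t))$, and show the two bounds are separated in the stated window of $t$.

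\textbf{Upper bound on the free side.} Since $Y(t) \sim X(t_0+t)$ and $X(s) = e^{-s/2}X + (1-e^{-s})^{1/2}S$ with $S$ free from $X$, the conjugate variables of $X(s)$ are the conditional expectations of the conjugate variables of the semicircular part onto $W^*(X(s))$. This gives the standard bound
\[
\Phi^*(X(s)) \leq \frac{m+1}{1-e^{-s}}.
\]
Relative to the Gaussian normalization, the excess Fisher information $\Phi^*(X(s)) - (m+1)$ is therefore at most $(m+1)e^{-s}/(1-e^{-s})$. At our times $e^{-s}$ is of order $1/(64(1+m^{1/2}))$, so this excess is $O(m^{1/2})$ with an explicit constant. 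I would sharpen it using the structure of $X$: the free semicircular part $(X_1,\dots,X_m)$ contributes nothing beyond a semicircular family. Only the interaction through the commutation with $X_{m+1}$ matters, and its conditional expectation should be controlled by a quantity of order $e^{-s}$ times something much smaller than $m$.

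\textbf{Lower bound on the classical side.} I would use the Wasserstein characterization of Fisher information along the Ornstein--Uhlenbeck flow, via the HWI/Talagrand-type inequality. For $\mu_t$ the law of $Y^{(n)}(t)$ and $\gamma$ the GUE law, the relative Fisher information satisfies
\[
I(\mu_t|\gamma) \geq \frac{\bigl(H(\mu_t|\gamma)\bigr)^2}{W_2(\mu_t,\gamma)^2},
\]
or one uses the exponential decay $W_2(\mu_t,\gamma)\le e^{-t/2}W_2(\mu_0,\gamma)$ together with the integrated form $\tfrac{d}{dt}H = -\tfrac12 I$. The key point, adapted from the OpenAI volume argument, is a lower bound on the normalized relative entropy $H(\mu_{t}|\gamma)/n^2$ that is much larger than what the free side allows. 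The argument runs as follows. Any matrix model $Y^{(n)}(0)$ approximating $X(t_0)$ with $t_0 \leq (m+1)^{-3}$ has $(X_1,\dots,X_m)$ nearly commuting with a matrix whose spectral projections split $\C^n$ into $k$ blocks. Covering-number and volume estimates then show that microstates are confined to a set of normalized log-volume roughly $-c\,m\log k$ relative to the Gaussian. The data processing inequality transports this entropy deficit forward to time $t$, where it is attenuated but still of size comparable to $m e^{-t}$ times a logarithmic factor. The threshold \eqref{eq: main condition on m} (with its $(m+1)^{1/8}$ and $\sqrt{2+\log(m+1)}$) looks exactly like the balance between this entropy deficit and a concentration error term. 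The window $t = 3\log 2 + \log(1+m^{1/2}) + t'$ is chosen so that $e^{-t}m^{1/2}$ is of order one, which makes the classical deficit dominate the free excess.

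\textbf{Combining.} The Fisher information is minus twice the derivative of entropy, and $\chi^*$ is obtained by integrating $\Phi^*$ while $\chi$ is controlled by integrating $\liminf \mathcal{I}^{(n)}$. A pointwise comparison therefore requires converting the entropy gap into a Fisher gap at each time in the window. I would do this through monotonicity: along the OU flow, $I(\mu_t|\gamma)$ is nonincreasing (Bakry--\'Emery), so
\[
I(\mu_t|\gamma) \geq \frac{2}{T}\,\bigl[H(\mu_t|\gamma)-H(\mu_{t+T}|\gamma)\bigr].
\]
For any later time $t+T$ one has $H(\mu_{t+T}|\gamma) \le e^{-T}H(\mu_t|\gamma)$ by the log-Sobolev inequality, so the right-hand side is bounded below by a fixed fraction of $H(\mu_t|\gamma)$. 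Choosing $T$ of order one (the slack $t' \le \tfrac14\log(m+1)$ allows this) gives $\liminf_n \mathcal{I}^{(n)}(Y^{(n)}(t)) - (m+1) \gtrsim$ (entropy deficit at time $t$), which then exceeds the free bound.

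\textbf{Main obstacle.} The hardest step is the quantitative entropy-deficit lower bound for arbitrary matrix models near the commuting law, with constants good enough to beat the free upper bound. This requires making the block-structure volume estimate explicit, including the number of spectral projections and the error from approximate commutation. I would first try to reproduce the OpenAI volume bound with a semicircular $X_{m+1}$ by choosing $k \approx (m+1)^{1/8}$ spectral projections.
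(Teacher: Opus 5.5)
Your outline has the right architecture: an upper bound on $\Phi^*(Y(t))$, a lower bound on $\liminf_n\mathcal{I}^{(n)}(Y^{(n)}(t))$, and $t$ chosen so that $e^{-t}m^{1/2}$ is of order one. But each half is missing the idea that actually creates the separation.

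\textbf{Free side.} The bound $\Phi^*(X(s))\le (m+1)/(1-e^{-s})$ cannot separate the two sides. The identical conditional-expectation argument gives $\mathcal{I}^{(n)}(Y^{(n)}(t))\le (m+1)/(1-e^{-t})$, which differs from your free bound only by $O((m+1)t_0e^{-t})$. Also, $e^{3\log 2}=8$, not $64$, so $e^{-(t+t_0)}\approx e^{-t'}/(8(1+m^{1/2}))$. The trivial bound then gives $\Phi_g^*(Y(t))^{1/2}\approx m^{1/4}e^{-t'/2}/(2\sqrt2)$, which exceeds the $\tfrac14(m+1)^{1/4}e^{-t'/2}$ that a block-detection argument delivers.

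Everything therefore rests on your ``sharpening'', which is only asserted. The heuristic ``order $e^{-s}$ times something smaller than $m$'' is not the mechanism; you need $\Phi_g^*(X(s))$ to decay at a faster exponential \emph{rate}. The paper gets this from the free moment--cumulant formula. It writes $Y_i$ as a series in the higher-order conjugate variables $\xi_{\vec\imath}=b^{-k}E_{\mathrm{W}^*(Y)}[U_{\vec\imath}(S)]$, where $U_{\vec\imath}$ are the noncommutative Chebyshev polynomials, orthonormal for $S$. This yields
\[
\norm{Y_i-\xi_i}_2^2\le\sum_{k\ge 0}\sum_{\vec\imath}a^{2k+2}b^{-2k}\bigl|K_{k+1}(X_i,X_{i_1},\dots,X_{i_k})-K_{k+1}(S_i,S_{i_1},\dots,S_{i_k})\bigr|^2
\]
with $a=e^{-s/2}$ and $b=(1-e^{-s})^{1/2}$. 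The tuple $X$ has mean zero, identity covariance and vanishing odd cumulants, which the Speicher--Wysocza{\'n}ski formula for the free/tensor mixture shows and which also controls the even ones. So the series starts at order $4$, giving $\Phi_g^*(X(s))\lesssim (m+1)^2e^{-4s}$ and hence $\Phi_g^*(Y(t))\le\tfrac17 e^{-4t'}$ in the window.

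\textbf{Classical side.} Your mechanism for carrying the block structure from time $0$ to time $t$ runs the wrong way. Data processing along the Ornstein--Uhlenbeck semigroup gives $H(\mu_t|\gamma)\le H(\mu_0|\gamma)$, which is an \emph{upper} bound. A microstate-volume deficit for $Y^{(n)}(0)$ says nothing by itself about $\mu_t$, since fine-scale constraints are washed out by the noise.

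What you need is a statistic evaluated at time $t$ that is robust to the noise and separates $\mu_t$ from GUE, with GUE concentration uniform over all block decompositions. The paper uses the $1$-Lipschitz function $\psi_k^{(n)}(x)=\sup_p\norm{\sum_j p_jxp_j}_2$ over balanced projection families.
\begin{itemize}
\item \emph{On the model.} Projections built from Lipschitz cutoffs $f_{L,j}(Y^{(n)}_{m+1}(0))$, rounded to exact projections with the right traces, give $\liminf_n\mathbb{E}\psi_k^{(n)}(Y^{(n)}(t))\gtrsim\sqrt{m+1}\,e^{-(t+t_0)/2}$. This holds because $Y^{(n)}(0)$ is nearly block diagonal for them, so the compression retains $e^{-t/2}Y^{(n)}(0)$.
\item \emph{Under GUE.} Herbst's inequality plus a $\delta$-net of $\mathbb{U}_n$ with $((2+\delta)/\delta)^{2n^2}$ points gives $\mathbb{E}\psi_k^{(n)}(S^{(n)})\le\sqrt{m+1}(1/k+1/n)^{1/2}+1+\sqrt{2+\log(m+1)}$. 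The crucial point is that the exponent $2n^2$ is independent of $m$.
\end{itemize}
The difference of these two expectations lower-bounds $W_2(Y^{(n)}(t),S^{(n)})$, and the information-cost inequality $W_2\le(\mathcal{I}_g^{(n)})^{1/2}$ finishes. No HWI or Bakry--\'Emery monotonicity is needed.

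If you prefer your entropy route, apply data processing to the coarse-graining by the event $\{\psi_k^{(n)}\ge\theta\}$ \emph{at time $t$}, then use the log-Sobolev inequality $\mathcal{I}_g^{(n)}\ge 2h_g^{(n)}$ directly. That still requires exactly this statistic and its uniform concentration, which your proposal does not supply.
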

	
	\subsection{Proof techniques}
	
	The proof has two parts:
	\begin{enumerate}[(1)]
		\item A lower bound on $\chi^*$ corresponding to upper bounds on the free Fisher information $\Phi^*$.
		\item An upper bound on $\chi$ correspond to lower bounds on the limiting classical Fisher information.
	\end{enumerate}
	
	For item (1), the proof of OpenAI is based on combinatorial free probability.  It gives an upper bound for the Fisher information of any tuple $X$ using a power series involving free cumulants of $X$ \cite[Proposition 3.2]{OpenAI2026entropy} (see Proposition \ref{prop: main conjugate variable bound}).  This is a consequence of a formula expressing $X$ as a power series in the higher-order conjugate variables of Voiculescu \cite{VoiculescuFE5} (see Proposition \ref{prop: conjugate variable identity}) based on an infinite-order Stein identity or integration-by-parts relation (see Proposition \ref{prop: Stein identity}); the classical analog of this result is \cite[Corollary 1]{Barbour1986} \cite[Proposition 2.2]{NP2010cumulants} in the single-variable case, and the same idea is used in the multivariate setting in \cite[\S 3-4]{BKL2026KLS} to address the KLS conjecture in convex geometry.  These computations are closely related to the theory of higher-order Stein kernels due to Fathi \cite{Fathi2019Stein,Fathi2021higher} (see especially \cite[Lemma 3.2]{Fathi2021higher}), which was adapted to the free setting by Diez \cite[\S 8]{Diez2023}, as both computations boil down to Taylor expansion and integration by parts.  For previous work on free Stein kernels and related ideas, see \cite{FathiNelson2017,Diez2023,CFM2020,Cebron2021,CN2021,CN2025}.
	
	Just as in \cite[\S 8.1, Lemma 4]{Diez2023} and \cite[Lemma 3.2]{Fathi2021higher}, the outcome of the computation based on \cite{OpenAI2026entropy} is that if the moments up to order $k$ of $X$ agree with those of the semicircular variable, then the Fisher information should be bounded by a constant times $e^{-(k+1)t}$ for large $t$.  OpenAI performed the computation using an infinite series rather than the finite truncated series used in the Stein kernel setting.  The infinite-series approach requires a lot more work to implement in the classical setting since it requires estimating the higher-order cumulants, while in the free case the cumulants grow at most geometrically in the degree, so power series methods are even more natural.
	
	The resulting estimate (\cite[Proposition 3.2]{OpenAI2026entropy}, Proposition \ref{prop: main conjugate variable bound}) implies that if $X(t) = e^{-t/2}X + (1 - e^{-t})^{1/2} S$ for a freely independent semicircular family, and if the moments of order up to $k$ agree with those of the semcircular family, then
	\begin{equation}
		\Phi_g^*(X(t)) \lesssim e^{-(k+1)t} \text{ for large } t,
	\end{equation}
	where $\Phi_g^*$ is the ``Gaussian'' version of Fisher information (see Definition \ref{def: Gaussian free Fisher}).
	After giving a general estimate for free Fisher information using free cumulants, OpenAI estimated the free cumulants of its example model \cite[Lemma 4.1]{OpenAI2026entropy}; in particular showing that many of the cumulants were zero due to the free independence of $X_1$, \dots, $X_m$ and tensor independence with $X_{m+1}$.  Whereas its argument was based on M{\"o}bius inversion, we can apply existing formulas for free cumulants in the setting of graph independence, which allows mixtures of free and tensor independence \cite{Mlotkowski2004,SpWys2016}.  This is especially convenient as it allows explicit computation for our choice of $X_{m+1}$ being a standard semicircular; see Proposition \ref{prop: graph independent bound}.  This results in very good bounds on the free Fisher information for large enough $t$, namely, letting $\Phi_g^*$ be the free Fisher information relative to the Gaussian, or $\Phi_g^*(Y) = \Phi^*(Y) + \norm{Y}_2^2 - 2(m+1)$, we have
	\begin{equation}
	\Phi_g^*(Y(t)) \lesssim (m+1)^2 e^{-4t} \text{ when } e^{-t} \lesssim (m+1)^{-1/2};
	\end{equation}
	see \eqref{eq: example free Fisher upper bound 2} for more precise statement.
	
	For item (2), OpenAI argued that $Y^{(n)}(t)$ must have a ``persistent block structure'' that prevents it from having large enough classical entropy.  Indeed, $Y_{m+1}(0)$ approximately commutes with $Y(0)$ when $t_0$ is small (recall $Y(0)$ corresponds to $X(t_0)$ in our notation), and hence for large $n$, $Y_{m+1}^{(n)}(0)$ must also approximately commute with $Y^{(n)}(0)$ with probability tending to $1$.  Hence, spectral projections of $Y_{m+1}^{(n)}(0)$ can be used to give an approximate block decomposition of $Y^{(n)}(0)$ that still sees most of the mass of $Y^{(n)}(0)$.  Namely, there exist projections $(p_j)_{j=1}^k$ of trace $\approx 1/k$ such that $\sum_{j=1}^k p_j = 1$ and
	\begin{equation}
		\sum_{j=1}^k p_j Y^{(n)}(0) p_j \approx Y^{(n)}(0).
	\end{equation}
	On the other hand, the GUE matrix $Z^{(n)}$ cannot have any such block structure, or
	\begin{equation}
		\sum_{j=1}^k p_j Z^{(n)} p_j \approx 0.
	\end{equation}
	This means that
	\begin{equation}
		\sum_{j=1}^k p_j Y^{(n)}(t) p_j \approx e^{-t/2} Y^{(n)}(0).
	\end{equation}
	This allows one to show that the distribution of $Y^{(n)}(t)$ cannot be too close to the distribution of $Z^{(n)}$ since the variance of $\sum_j p_j Y^{(n)}(t) p_j$ is about $(m+1)e^{-t}$ while the variance of $\sum_j p_j Z^{(n)} p_j$ is close to zero.
	
	To make these approximate statements rigorous, we must first of all account for the fact that the projections $p_j$ vary with $Y^{(n)}(0)$, and they cannot a priori be expressed as a function of $Y^{(n)}(t)$.  Thus, we consider the following function
	\begin{equation}
	\psi_k^{(n)}(x) = \sup_{p} \norm*{ \sum_{j=1}^k p_jxp_j }_2,
	\end{equation}
	for $x \in \mathbb{M}_n)_{\sa}^m$, where $\norm{\cdot}_2$ is the normalized Hilbert--Schmidt norm of a matrix tuple and $P$ ranges of choices of projections with trace about $1/k$ (see Notation \ref{not: partition test} for the precise definition).
	
	OpenAI considers the Borel event where $\psi_k^{(n)}$ is larger than a certain threshold, and then compares its probability under the Gaussian distribution versus the distribution of $Y^{(n)}(t)$.  To show that the value of $\psi_k^{(n)}(S^{(n)})$ is small with high probability, it uses standard concentration of measure arguments \cite[Lemma 6.1]{OpenAI2026entropy}.  Then it uses the data processing inequality describing the behavior of relative entropy (i.e.\ KL divergence) under conditioning onto a finite partition of the space \cite[p. 13, proof of Theorem 1.1]{OpenAI2026entropy}.  We remark that the idea of estimating the volume of spaces of matrices satisfying an approximate commutation was employed by von Neumann in 1942 \cite{vonNeumann1942}. Voiculescu's first von Neumann algebraic application of free entropy techniques in \cite{VoiculescuFE3} was based on the same principle, together with covering estimates for spaces of projections (or Grassmannians) proved by Szarek \cite{Szarek}.  These have become standard techniques in the study of microstates free entropy, free entropy dimension, and $1$-bounded entropy;  more generally, one can leverage not only commutation, but other properties of the bimodule structure of $L^2(M)$ over a subalgebra $A$; see e.g. \cite{Jung2007S1B,DimaSDE,Shl2015,Hayes2018}.  The idea is that volume of matrix microstate spaces in a neighborhood of a point can be analyzed using the approximate kernel or approximate image of some left-right multiplication operator $T(x) = \sum_{j=1}^k a_kxb_k$, and the projection-paving operator $\sum_{j=1}^k p_j x p_j$ used here is an instance of this.
	
	Instead of using the data processing inequality, we will here take a different approach that demonstrates a stronger claim about Wasserstein distance.  Namely, we will give a lower bound for the expectations
	\begin{equation} \label{eq: intro rough lower bound}
		\mathbb{E} \psi_k^{(n)}(Y^{(n)}) - \mathbb{E} \psi_k^{(n)}(S^{(n)}) \gtrsim (m+1)^{1/2} e^{-t/2} - \operatorname{Error}.
	\end{equation}
	Since $\psi_k^{(n)}$ is $1$-Lipschitz, this in turn produces a lower bound for the $L^2$ (or even $L^1$) Wasserstein distance $W_2(Y^{(n)}(t),S^{(n)})$.  In turn, the Wasserstein distance gives a lower bound for the square root of the classical Fisher information (relative to Gaussian) using the information-cost inequality (see Theorem \ref{thm: Gaussian functional inequalities} below).  We remark that the log-Sobolev inequality would also allow us to lower-bound the Gaussian Fisher information by the Gaussian entropy.
	
	This means that our approximate upper bound on the Gaussian free Fisher information is $(m+1)^2 e^{-4t}$ while the approximate lower bound on the Gaussian classical Fisher information is $(m+1)^{1/2} e^{-t/2}$.  Thus, the free Fisher information decays at a faster exponential rate than the classical Fisher information, provided that $t$ is not so large that the error term in \eqref{eq: intro rough lower bound} dominates, which requires careful estimates.
	
	The study of higher-order Stein kernels in \cite{Fathi2019Stein} gives a heuristic explanation for these differing rates.  If a random variable has moments up to order $k$ that agree with the Gaussian and the distribution is sufficiently regular, one can expect the Wasserstein distance of the Langevin diffusion to be bounded by a constant times $e^{-(k+1)t/2}$ as $t \to \infty$.  For the ideal random variables $X(t)$ in our example, the moments up to order $3$ agree with those of a free semicircular family, hence the Wasserstein distance decays as $e^{-2t}$ (as it is bounded by the square root of the Fisher information \cite{HU2006}).  However, if we consider random matrix models that have block structure relative to a given family of projections $P_j$ (pretending that the $p_j$'s are constant), the covariances of the matrix entries do not agree with those of the GUE ensemble, so the Wasserstein distance cannot decay faster than $e^{-t}$ (in this discussion, I focus only on the decay in $t$ and disregard factors of $m$).  However, the block structure of the random matrix ensembles cannot be detected by the noncommutative moments alone.
	
	However, there is significant subtlety here.  We proved the lower bound $e^{-t/2}$ holds for $t$ up to a certain threshold.  Based on a constant block structure, if the ensemble has mean zero, one would expect the decay rate to be $e^{-t}$ times the differences in variance with the Gaussian, for sufficiently large $t$ and up to some constants depending on $m$ and possibly $n$.  In fact, if we arrange that the distribution of $Y^{(n)}(t)$ is unitarily invariant, it is not immediately clear how to distinguish its covariances from those of GUE tuple.  However, even though $p_j$ is not literally constant, the number of degrees of freedom in choosing $p_j$ is a constant times $n^2$ (independent of $m$); thus, the possible choices of $p_j$ range in a much lower-dimensional space, so as long as $m$ is large and $t$ is not too large, we can use concentration of measure to obtain behavior similar to the constant-$p_j$ case.  The comparison of different decay rates also suggests an interesting question for further research, namely whether there is some phase transition in the behavior of $W_2(Y^{(n)}(t),S^{(n)})$ for large $t$.

	The ideas here also provide additional motivation for the continuous model theory framework for free entropy and large matrix models in \cite{Jekel2025InfoGeom,Jekel2026FIG}.  The method of proof illustrates the important role of variational formulas with suprema and infima.  The test function $\psi_k^{(n)}$ is defined using a supremum over a family of projections $p$, and this indeed corresponds to a definable predicate in continuous model theory, because the set of projection families $(p_j)_{j=1}^k$ with trace $1/k$ and $\sum_{j=1}^k p_j = 1$ is a definable.  We thus obtain a lower bound for the Wasserstein distance using a sup-formula.  And by contrast, it would not be possible to obtain this lower bound using any dimension-independent test function that only depends on the noncommutative moments (i.e.\ a quantifier-free formula).  Indeed, since the abstract free Wasserstein distance of $Y(t)$ and $S$ is bounded by the square root of the free Fisher information $\Phi_g^*(Y(t))$ by \cite{HU2006}, the difference $|\tau(f(Y(t))) - \tau(f(S))|$ (which also agrees with the large-$n$ limit of $|\mathbb{E} \tr_n(f(Y^{(n)}(t))) - \mathbb{E} \tr_n(f(S^{(n)}))|$) will be bounded by this free Wasserstein distance times the Lipschitz norm of $f$ on an appropriate operator-norm ball, for any noncommutative polynomial $f$.
	
	\subsection{Organization}
	
	The rest of the paper is organized as follows:
	\begin{itemize}
		\item \S \ref{sec: preliminaries} reviews prerequisites on free probability as well as classical and free entropy.
		\item \S \ref{sec: main upper bound} proves the upper bound for free Fisher information in terms of cumulants in general and estimates the cumulants in the main example.
		\item \S \ref{sec: block detection} studies the function $\psi$ that detects approximate block structure and shows a lower bound on the Wasserstein distance of the main example from Gaussian.
		\item \S \ref{sec: conclusion} concludes the proofs of the main results.
	\end{itemize}
	
	\subsection*{Acknowledgements and AI Declaration}
	
	This paper is of course inspired by the preprint generated and posted online by OpenAI.  I am also grateful to Jonathan Shi for sharing a Claude-generated assessment of missing attributions in OpenAI's preprint, which in particular pointed out the relevance of literature on Stein kernels.  However, other than working from these sources, I did not use any AI models in any way for the preparation of this paper.  I thank Charles-Philippe Diez for advice on free and classical Stein kernels.  I thank Diez and F{\'e}lix Parraud for finding typos in an earlier draft.
	
	\section{Preliminaries} \label{sec: preliminaries}
	
	\subsection{Free probability and random matrices}
	
	We assume familiarity with noncommutative probability spaces as presented in \cite[\S 5.2]{AGZ2009}. We recall the following definitions, conventions, and facts.
	
	\begin{notation}[Noncommutative probability spaces and $L^p$ norms] ~
		\begin{enumerate}[(1)]
			\item The noncommutative analog of a probability space used in this paper is a \emph{tracial von Neumann algebra}, namely a von Neumann algebra $M$ with a faithful normal tracial state $\tau: M \to \mathbb{C}$.  However, if the reader wants to minimize technical background, a $\mathrm{C}^*$-algebraic noncommutative probability space is also sufficient.
			\item For $X \in M_{\sa}^m$, we denote by $\mathrm{W}^*(X)$ the von Neumann subalgebra generated by $X$.
			\item $M_{\sa}$ denotes the set of self-adjoint elements in $M$.
			\item For tuples $X = (X_1,\dots,X_m)$ and $Y = (Y_1,\dots,Y_m)$ from $M$, the inner product with respect to the trace is given by
			\begin{equation}
			\ip{X,Y}_{\tau} = \sum_{j=1}^m \tau(X_j^*Y_j).
			\end{equation}
			The completion of $M$ with respect to this inner product (also known as the GNS construction) is denoted $L^2(M,\tau)$.
			\item Likewise, the $L^p$ norm with respect to the trace is
			\begin{equation}
			\norm{X}_p = \left( \sum_{j=1}^m \tau(|X_j|^p)\right)^{1/p} \text{ for } p \in [1,\infty),
			\end{equation}
			where $|X_j| = (X_j^*X_j)^{1/2}$, and
			\begin{equation}
			\norm{X}_\infty = \max(\norm{X_1},\dots,\norm{X_m}),
			\end{equation}
			where $\norm{\cdot}$ denotes the operator norm.  It is well known that for $X$, $Y \in M$, we have the noncommutative H{\"o}lder's inequality
			\begin{equation}
			\norm{XY}_p \leq \norm{X}_{p_1} \norm{Y}_{p_2}, \text{ when } \frac{1}{p_1} + \frac{1}{p_2} = \frac{1}{p},
			\end{equation}
			but we will only need this for the cases of $1$, $2$, and $\infty$.
		\end{enumerate}
	\end{notation}
	
	\begin{notation}[Noncommutative polynomials and laws] ~
	\begin{enumerate}[(1)]
		\item We denote by $\C\ip{x_1,\dots,x_m}$ the noncommutative polynomial algebra on $m$ variables, equipped with the unique $*$-algebra structure for which $x_j^* = x_j$.
		\item Given a self-adjoint tuple $X = (X_1,\dots,X_m)$ in a tracial von Neumann algebra $(M,\tau)$, the \emph{evaluation} $f(X)$ of a noncommutative polynomial $f$ is given by the unique homomorphism $\C\ip{x_1,\dots,x_m} \to M$ sending $x_j$ to $X_j$.
		\item The \emph{noncommutative law} of $X$ is the linear functional $\lambda_X: \C\ip{x_1,\dots,x_m} \to \C$ given by
		\begin{equation}
		\lambda_X(f) = \tau(f(X)).
		\end{equation}
		\item Given tracial von Neumann algebras $(M^{(n)},\tau^{(n)})$ for $n \in \N$ and $(M,\tau)$, given $X^{(n)} \in (M^{(n)})_{\sa}^m$ and $X \in M_{\sa}^m$, we say that $X^{(n)}$ \emph{converges to $X$ in noncommutative law} if
		\begin{equation}
		\lim_{n \to \infty} \tau^{(n)}(f(X^{(n)})) = \tau(f(X)) \text{ for all } f \in \C\ip{x_1,\dots,x_m}.
		\end{equation}
	\end{enumerate}
	\end{notation}
	
	\begin{notation}[Matrix algebras] \label{not: matrix algebras}
	$\mathbb{M}_n$ denotes the $n \times n$ complex matrix algebra and $\tr_n$ denotes the normalized trace $\tr_n = (1/n) \Tr_n$.  Note that $(\mathbb{M}_n,\tr_n)$ is an example of a tracial von Neumann algebra.
	
	We view $(\mathbb{M}_n)^m$ as a complex inner product space with respect to $\ip{\cdot,\cdot}_{\tr_n}$.  Note that this restricts to a real inner product on $(\mathbb{M}_n)_{\sa}^m$.  As a real inner product space, it is isomorphic to $\R^{mn^2}$, and we use this identification to define Lebesgue measure on $(\mathbb{M}_n)_{\sa}^m$ (as well as other concepts such as the gradient of a function).
	
	It is important to note that the resulting normalizations are different from those obtained by looking at matrices entrywise; namely, for us an orthonormal basis for $\mathbb{M}_n$ is given by
	\begin{equation}
	\{\sqrt{n} E_{j,k} \}_{j,k=1}^n
	\end{equation}
	or by
	\begin{equation}
	\{ \sqrt{n} E_{j,j} \}_{j=1}^n \cup \{\sqrt{n/2} (E_{j,k} + E_{k,j}) \}_{j < k} \cup \{\sqrt{n/2}(iE_{j,k} - i E_{j,k})\}_{j < k}.
	\end{equation}
	\end{notation}

	We recall the definition of free independence, GUE random matrix models, and Voiculescu's asymptotic freeness theorem.
	
	\begin{definition}[Free independence; see {\cite{VDN1992}}]
	Let $(M,\tau)$ be a tracial von Neumann algebra.
	\begin{enumerate}[(1)]
		\item $*$-subalgebras $A_1$, \dots, $A_m \subseteq M$ are \emph{freely independent} if for any $\ell \in \N$, for any $i_1$, \dots, $i_\ell \in [m]$ with $i_1 \neq i_2 \neq \dots \neq i_\ell$, for any $x_1 \in A_{i_1}$, \dots, $x_\ell \in A_{i_\ell}$, we have
		\begin{equation}
		\tau[(x_1 - \tau(x_1)) \dots (x_\ell - \tau(x_\ell))] = 0.
		\end{equation}
		\item Families of random variables $X_1 = (X_{1,1},\dots,X_{1,t_1})$, \dots, $X_m = (X_{m,1},\dots,X_{m,t_m})$ are said to be freely independent if the $*$-subalgebras generated by $X_1$, \dots, $X_m$ respectively are freely independent.
	\end{enumerate}
	\end{definition}
	
	We recall that given any list of tracial von Neumann algebras, one can form their free product, which contains canonical freely independent copies of the original algebras; see \cite{VDN1992}.
	
	\begin{definition}[Free semicircular family]
		A \emph{standard semicircular operator} is an element $X$ in a tracial von Neumann algebra $(M,\tau)$ with spectral distribution given by Wigner's semicircular measure, i.e., such that
		\begin{equation}
		\tau(f(X)) = \frac{1}{2\pi} \int_{-2}^2 f(t) \sqrt{4 - t^2}\,dt \text{ for } f \in \C[x].
		\end{equation}
		A \emph{standard semicircular family (or $m$-tuple)} is a family $(X_1,\dots,X_m)$ of standard semicircular operators such that $X_1$, \dots, $X_m$ are freely independent.
	\end{definition}
	
	\begin{notation}[GUE matrix tuple] \label{not: GUE tuple}
	A \emph{GUE matrix} is a random matrix $S^{(n)}$ with probability distribution $\gamma_1^{(n)}$ given by
	\begin{equation}
	d\gamma_1^{(n)}(x) = (2 \pi n^2)^{-n^2/2} \exp(-n^2 \tr_n(x^2))\,dx.
	\end{equation}
	Here $dx$ is Lebesgue measure on $(\mathbb{M}_n)_{\sa}$ (with the normalization as in Notation \ref{not: matrix algebras}).  The matrix $S^{(n)}$ corresponds to $1/n^2$ times a standard Gaussian vector in the inner product space $((\mathbb{M}_n)_{\sa},\ip{\cdot,\cdot}_{\tr_n})$.
	
	Similarly, a \emph{GUE matrix tuple} is a family $(S_1^{(n)},\dots,S_m^{(n)})$ of independent GUE matrices, and their probability distribution is denoted $\gamma_m^{(n)}$. 
	\end{notation}
	
	\begin{notation}[Convergence of random matrix models] \label{not: random matrix convergence}
	Let $X = (X_1,\dots,X_m)$ be a tuple of self-adjoint random variables in a tracial von Neumann algebra.  For $n \in \N$, let $X^{(n)} = (X_1^{(n)},\dots,X_m^{(n)})$ be a tuple of $n \times n$ random self-adjoint matrices, or random variables in $(\mathbb{M}_n)_{\sa}^m$.  We say that \emph{$X^{(n)}$ converges to $X$ in noncommutative law in probability} if
	\begin{equation}
	\lim_{n \to \infty} \tr_n(f(X^{(n)})) = \tr(f(X)) \text{ in probability for } f \in \C\ip{x_1,\dots,x_m}.
	\end{equation}
	\end{notation}
	
	\begin{theorem}[Voiculescu's asymptotic freeness; see {\cite[Theorem 2.2]{Voiculescu1991}, \cite[\S 5.4]{AGZ2009}}] \label{thm: asymptotic freeness}
	Let $X$ and $X^{(n)}$ be as in Notation \ref{not: random matrix convergence} with $X^{(n)}$ converging to $X$ in noncommutative law in probability, and assume that
	\begin{equation}
	\limsup_{n \to \infty} \norm{X^{(n)}}_\infty < \infty \text{ in probability.}
	\end{equation}
	Let $S^{(n)}$ be a GUE $m$-tuple independent of $X^{(n)}$, and let $S$ be a free semicircular family free from $X$.  Then $(X^{(n)},S^{(n)})$ converges to $(X,S)$ in noncommutative law in probability.
	\end{theorem}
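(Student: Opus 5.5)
This is Voiculescu's asymptotic freeness theorem, cited from the literature. A proof would follow the standard moment-method argument. The idea is to reduce to deterministic matrices by conditioning on $X^{(n)}$, compute expected mixed moments of GUE with deterministic matrices via Wick calculus, and then upgrade to convergence in probability via concentration.

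\emph{Step 1 (reduction).} Since $S^{(n)}$ is independent of $X^{(n)}$, we condition on $X^{(n)}$. By the operator-norm hypothesis and the convergence in probability, it suffices to prove the following deterministic statement. Let $D^{(n)}$ be deterministic self-adjoint tuples with $\sup_n \norm{D^{(n)}}_\infty \le C$ and $\tr_n(f(D^{(n)})) \to \tau(f(X))$ for all $f$. Then for every monomial $w$ in $(x,s)$, we have $\tr_n(w(D^{(n)},S^{(n)})) \to \tau(w(X,S))$ in probability. The passage back to random $X^{(n)}$ uses a subsequence argument: convergence in probability is equivalent to every subsequence having a further subsequence converging almost surely. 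Alternatively, one can directly bound $\mathbb{P}$ of deviations uniformly over $D$ in a set where the relevant finitely many moments are close.

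\emph{Step 2 (expectation).} Write a monomial as $\tr_n(A_1 S_{i_1} A_2 S_{i_2}\cdots A_\ell S_{i_\ell})$ with $A_j$ monomials in $D^{(n)}$. Expand with the Wick formula for Gaussian entries, using the covariance $\mathbb{E}[(S_i)_{ab}(S_j)_{cd}] = \delta_{ij}\delta_{ad}\delta_{bc}/n$ of the standard GUE. The expectation becomes a sum over pairings $\pi$ of $[\ell]$ respecting the colors $i_j$. Each term is a product of normalized traces of products of the $A_j$ along the cycles of $\gamma\pi$, where $\gamma$ is the cyclic shift, weighted by $n^{\#(\gamma\pi)-1-\ell/2}$. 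The genus expansion shows this exponent is $\le 0$, with equality exactly for noncrossing $\pi$. Bounded operator norms control the nonplanar terms by $O(n^{-2})$. In the limit, only noncrossing pairings survive, yielding $\sum_{\pi\in \NC_2} \prod \tau(\text{products of } A\text{'s along the Kreweras complement})$. This is precisely the known formula for mixed moments of a free semicircular family free from $X$ (free cumulants of $S$ being only second-order, $\kappa_2(S_i,S_j)=\delta_{ij}$). Hence $\mathbb{E}\tr_n(w(D^{(n)},S^{(n)})) \to \tau(w(X,S))$.

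\emph{Step 3 (concentration).} We show that $\operatorname{Var}\tr_n(w(D^{(n)},S^{(n)})) \to 0$. One route is the same Wick expansion applied to the product of two traces: the connected contributions are $O(n^{-2})$. A cleaner route is the Gaussian Poincaré inequality, with the Lipschitz estimate done on the event $\norm{S^{(n)}}_\infty \le 3$. In our normalization, the gradient of $\tr_n(w)$ has Hilbert--Schmidt size $O(1)$, while Poincaré carries a factor $n^{-2}$. Off the good event, we use the exponential tail bound for $\norm{S^{(n)}}$ together with polynomial moment bounds. Chebyshev's inequality then gives convergence in probability.

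The main obstacle is the combinatorial bookkeeping in Step 2. Specifically, one must verify that the Euler-characteristic bound $\#(\gamma\pi) \le \ell/2+1$ holds, with equality iff $\pi$ is noncrossing, and match the surviving terms with the free moment–cumulant formula.
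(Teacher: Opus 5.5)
Your outline is correct. The paper gives no proof of its own and only quotes the result from Voiculescu and Anderson--Guionnet--Zeitouni, and your argument (Wick/genus expansion for the expected mixed moments with deterministic inputs, a Poincar\'e/variance bound to upgrade to convergence in probability, and conditioning on $X^{(n)}$ with a subsequence argument) is the standard moment-method proof behind those citations; note also that the uniform operator-norm bound on $D^{(n)}$ is a convenience rather than a necessity, since each genus-expansion term is a product of normalized traces of words in $D^{(n)}$, which converge by hypothesis.
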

	
	We also recall that GUE matrices satisfying strong tail bounds with respect to operator norm.
	
	\begin{theorem}[{\cite{Ledoux2003remark}}] \label{thm: GUE operator norm bound}
	Let $S^{(n)}$ be a GUE $m$-tuple.  Then for some positive constants $C$ and $c$,
	\begin{equation}
	\mathbb{P}(\norm{S^{(n)}} \geq 2 + \delta) \leq C m e^{-cn \delta^{3/2}}
	\end{equation}
	\end{theorem}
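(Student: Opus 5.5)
The plan is to reduce to a single GUE matrix and then bound the largest eigenvalue by a high-moment Markov argument, with the number of moments tuned to $\delta$. This is essentially Ledoux's approach in \cite{Ledoux2003remark}. The main obstacle is the sharp moment estimate.

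\emph{Reduction.} Here $\norm{S^{(n)}}$ means $\norm{S^{(n)}}_\infty = \max_j \norm{S_j^{(n)}}$. A union bound therefore produces the factor $m$, and it suffices to treat $m=1$. Since $-S^{(n)}$ has the same law as $S^{(n)}$, it also suffices to bound $\mathbb{P}(\lambda_{\max}(S^{(n)}) \geq 2+\delta)$, at the cost of another factor $2$.

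\emph{Three regimes in $\delta$.}
\begin{itemize}
\item \emph{Small $\delta$.} When $n\delta^{3/2} \leq 1$, the bound is trivial once $C \geq e^{c}$.
\item \emph{Large $\delta$, say $\delta \geq 1$.} I would use Gaussian concentration. In the normalization of Notation \ref{not: GUE tuple}, $x \mapsto \norm{x}$ is $\sqrt{n}$-Lipschitz for $\norm{\cdot}_2$, since $\norm{x} \leq \sqrt{n}\,\norm{x}_2$. The GUE is a Gaussian with per-coordinate variance of order $1/n^2$ in an orthonormal basis. Hence $\mathbb{P}(\norm{S^{(n)}} \geq \mathbb{E}\norm{S^{(n)}} + s) \leq e^{-c n s^2}$. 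Together with $\mathbb{E}\norm{S^{(n)}} \leq 2 + o(1)$ (or simply $\leq 3$, e.g.\ from the moment bound below with $p \sim n^{2/3}$), this gives $e^{-cn\delta^2} \leq e^{-cn\delta^{3/2}}$ for $\delta$ bounded below, after adjusting constants.
\item \emph{Intermediate regime, $n^{-2/3} \leq \delta \leq 1$.} This is the core case, handled by the moment argument below.
\end{itemize}

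\emph{Moment argument for the intermediate regime.} I would use the sharp moment estimate for the GUE,
\begin{equation}
\mathbb{E}\tr_n\bigl[(S^{(n)})^{2p}\bigr] \leq C_p\, \exp\Bigl(\frac{c_0\, p^3}{n^2}\Bigr),
\end{equation}
where $C_p \sim 4^p p^{-3/2}/\sqrt{\pi}$ is the Catalan number. Following Ledoux, this comes from the Harer--Zagier three-term recursion for $\mathbb{E}\tr_n(S^{2p})$ by induction on $p$. Establishing it is the main obstacle, and I would take it essentially from \cite{Ledoux2003remark}; the rest is optimization.

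Markov's inequality applied to $\Tr = n\tr_n$ gives
\begin{equation}
\mathbb{P}(\lambda_{\max} \geq 2+\delta) \leq \frac{n\,\mathbb{E}\tr_n(S^{2p})}{(2+\delta)^{2p}} \lesssim n\,p^{-3/2} \exp\Bigl(\frac{c_0 p^3}{n^2} - 2p\log(1+\delta/2)\Bigr).
\end{equation}
Since $\log(1+\delta/2) \geq \delta/3$ for $\delta \leq 1$, I would choose $p = \lfloor \epsilon\, n\delta^{1/2}\rfloor$ with $\epsilon$ small. Then the exponent is at most $-c\,n\delta^{3/2}$.

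The prefactor is
\begin{equation}
n\,p^{-3/2} \approx \epsilon^{-3/2}\, n^{-1/2}\delta^{-3/4} = \epsilon^{-3/2}\,(n\delta^{3/2})^{-1/2} \leq \epsilon^{-3/2},
\end{equation}
which is bounded in this regime. So the Catalan decay $p^{-3/2}$ exactly absorbs the dimension factor $n$, and this is why the sharp form of the moment bound matters.

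Combining the three regimes and the union bound over $j$ gives the stated inequality $C m e^{-cn\delta^{3/2}}$.
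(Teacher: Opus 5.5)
Your proposal is correct and is essentially Ledoux's Harer--Zagier moment argument, which is all the paper relies on here, since it cites \cite{Ledoux2003remark} without giving a proof. (With $a_p = \mathbb{E}\tr_n[(S^{(n)})^{2p}]/C_p$, the recursion becomes $a_{p+1} = a_p + \frac{p(p+1)}{4n^2}a_{p-1}$, which gives your moment bound with $c_0 = 1/12$.)

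One thing needs tidying: $p = \lfloor \epsilon n \delta^{1/2} \rfloor$ can be $0$ when $n\delta^{3/2}$ is only $\geq 1$, so enlarge the trivial regime to $n\delta^{3/2} \leq K$ with $K$ of order $1/\epsilon$. Also use $\mathbb{E}\norm{S^{(n)}} \leq 2 + o(1)$ rather than $\leq 3$ if the concentration regime is to start at $\delta = 1$.
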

	
	See also \cite[\S 2.1.6, 2.6.2]{AGZ2009} for other estimates and proofs.  Note in particular that this implies by the Borel--Cantelli lemma that $\lim_{n \to \infty} \norm{S^{(n)}}_\infty = 2$ almost surely.  Moreover, $\mathbb{E} \exp(\norm{S}_\infty)$ is bounded by a constant independent of $n$.
	
	\subsection{Non-crossing partitions and cumulants}
	
	Next we recall the combinatorial machinery of non-crossing partitions, which is extremely useful for moment computations in free probability.  For background and details,  see \cite{Speicher1994,Speicher1998,NS2006}.
	
	\begin{notation}[Partitions] \label{not: partitions} ~
	\begin{enumerate}[(1)]
		\item We use the notation $[\ell] = \{1,\dots,\ell\}$.
		\item A \emph{partition} of $[\ell]$ is a collection $\pi$ of non-empty subsets of $[\ell]$, called \emph{blocks}, such that $[\ell] = \bigsqcup_{B \in \pi} B$.
		\item A \emph{pair partition} is one in which each block has exactly two elements.
		\item For a partition $\pi$ of $[\ell]$, a \emph{crossing} is a sequence of indices $i < i' < j < j'$ such that $i$ and $j$ are in some block $B$ and $i'$ and $j'$ are in some block $B' \neq B$.
		\item We say that $\pi$ is \emph{non-crossing} if it has no crossings.
		\item We denote by $\operatorname{NC}(\ell)$ the set of non-crossing partitions of $[\ell]$.
	\end{enumerate}
	\end{notation}
	
	\begin{notation}[Free cumulants] ~
		\begin{enumerate}[(1)]
			\item Given a vector space $V$, a sequence $\Lambda_k: V^k \to \C$ of multilinear maps, and a partition $\pi$ of $[\ell]$, let $\Lambda_\pi: V^\ell \to \mathbb{C}$ be given by
			\begin{equation}
			\Lambda_\pi(v_1,\dots,v_\ell) = \prod_{B \in \pi} \Lambda_{|B|}(v_j: j \in B),
			\end{equation}
			where $(v_j: j \in B)$ is understood to be ordered from left to right.
			\item For a noncommutative probability space $(M,\tau)$, the \emph{free cumulants} are the unique multilinear maps $K_k: M^k \to \C$ satisfying
			\begin{equation}
			\tau(X_1 \dots X_\ell) = \sum_{\pi \in \operatorname{NC}(\ell)} K_\pi(X_1,\dots,X_\ell).
			\end{equation}
			The existence and uniqueness of the free cumulants follows from M{\"o}bius inversion.
		\end{enumerate}
	\end{notation}
	
	Free independence can be expressed in terms of cumulants as follows.
	
	\begin{theorem}[{\cite[Theorem 11.15]{NS2006}}] \label{thm: freeness and cumulants}
	Let $A_1$, \dots, $A_m$ be $*$-subalgebras of $(M,\tau)$.  The following are equivalent:
	\begin{enumerate}[(1)]
		\item $A_1$, \dots, $A_m$ are freely independent.
		\item If $i_1$, \dots, $i_\ell \in [m]$ are not all equal to each other, and $X_1 \in A_{i_1}$, \dots, $X_\ell \in A_{i_\ell}$, then
		\begin{equation}
		K_\ell(X_1,\dots,X_\ell) = 0.
		\end{equation}
	\end{enumerate}
	\end{theorem}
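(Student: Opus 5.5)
The plan is to prove (2)$\Rightarrow$(1) by a direct combinatorial argument on non-crossing partitions. I will then deduce (1)$\Rightarrow$(2) from it via the fact that freeness determines all mixed moments from the restrictions $\tau|_{A_i}$. A standing convention: an index $i_j \in [m]$ is called the \emph{color} of position $j$.

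\emph{(2)$\Rightarrow$(1).} Fix alternating colors $i_1 \neq i_2 \neq \dots \neq i_\ell$ and $x_j \in A_{i_j}$. Put $y_j = x_j - \tau(x_j) \in A_{i_j}$; this uses that $A_{i_j}$ is unital, which I take as part of the setup. By the moment--cumulant formula,
\[
\tau(y_1\cdots y_\ell) = \sum_{\pi \in \NC(\ell)} K_\pi(y_1,\dots,y_\ell).
\]
\begin{enumerate}[(a)]
\item By (2), $K_\pi$ vanishes unless every block of $\pi$ is monochromatic.
\item Every non-crossing partition has a block that is an interval of consecutive integers. This is seen by taking a block whose convex hull contains no other block, which exists by the non-crossing property.
\item Since neighbouring colors differ, a monochromatic interval block must be a singleton $\{j\}$.
\item The singleton contributes the factor $K_1(y_j) = \tau(y_j) = 0$.
\end{enumerate}
Hence every term vanishes, and $A_1,\dots,A_m$ are free.

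\emph{(1)$\Rightarrow$(2).} Let $A$ be the (unital) algebra generated by $A_1,\dots,A_m$. The first step is the standard uniqueness lemma: if $A_1,\dots,A_m$ are free with respect to a state, then its values on $A$ are determined by its restrictions to the $A_i$. The proof is by induction on the length of a word $a_1\cdots a_\ell$ with $a_j \in A_{i_j}$. Consecutive letters of equal color are merged, giving an alternating word. Each letter is then written as $(a_j - \tau(a_j)) + \tau(a_j)$. By freeness the fully centered term vanishes, and all other terms are shorter words.

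The second step is to build a competitor functional. On words in elements of $\bigcup_i A_i$, define
\[
\varphi(a_1\cdots a_\ell) = \sum_{\pi \in \NC(\ell)} \kappa_\pi(a_1,\dots,a_\ell),
\]
where $\kappa_k(a_1,\dots,a_k) = K_k(a_1,\dots,a_k)$ if all $a_j$ lie in a common $A_i$, and $\kappa_k = 0$ otherwise.

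The main obstacle is that this does not obviously define a well-defined linear functional on $A$ itself. The relations inside each $A_i$ must be respected, for instance $a\cdot b$ versus the single element $ab \in A_i$. To handle this, I would work on the algebraic free product $\ast_i A_i$ with amalgamated units, where $A$ is a quotient and such words are honest spanning elements. For compatibility with merging equal-colored neighbours, I would invoke the standard formula for cumulants with products as arguments, namely a sum over $\pi \vee \sigma = 1$. This formula shows that grouping same-color adjacent letters is consistent with using $K|_{A_i}$. Alternatively, I would avoid the issue by realizing $\varphi$ concretely as the free product state $\ast_i(\tau|_{A_i})$. In that realization the $\kappa$'s are its cumulants by (2)$\Rightarrow$(1) applied there, combined with a uniqueness of cumulants argument.

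Granting $\varphi$, the argument closes as follows.
\begin{enumerate}[(a)]
\item By the computation in (2)$\Rightarrow$(1), the $A_i$ are free with respect to $\varphi$.
\item For words in a single $A_i$, only pure blocks occur, so $\varphi|_{A_i} = \tau|_{A_i}$.
\item By the uniqueness lemma, $\varphi = \tau$ on all words.
\item Möbius inversion recovers cumulants from moments of words, so $K_\ell(X_1,\dots,X_\ell) = \kappa_\ell(X_1,\dots,X_\ell)$.
\end{enumerate}
This equals $0$ whenever the colors are not all equal, which is (2).
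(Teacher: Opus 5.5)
The paper gives no proof of this statement; it is quoted from Nica--Speicher.

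\textbf{The direction (2)$\Rightarrow$(1).} This is the standard argument: a non-crossing partition with monochromatic blocks has an interval block, which must be a singleton for an alternating word, and $K_1$ of a centered element vanishes. It is fine.

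\textbf{The direction (1)$\Rightarrow$(2).} You take a different route, and it has a gap exactly where you flag it. Everything after ``Granting $\varphi$'' requires $\varphi$ to be a well-defined functional on the algebraic free product $P=\ast_i A_i$. Your uniqueness lemma merges adjacent same-colored letters and splits off units, so it uses the relations of $P$.
\begin{itemize}
\item Your second fix does not close the gap. Knowing that the cumulants of $\ast_i(\tau|_{A_i})$ are the $\kappa$'s \emph{is} the implication (1)$\Rightarrow$(2) for that state. The implication (2)$\Rightarrow$(1) runs the other way, so this argument is circular.
\item Your first fix is the right idea, but it is not yet an argument. The product formula concerns the cumulants of a given functional, and $\kappa$ is not yet known to be the cumulant family of anything on $P$. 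It has to be applied blockwise to $K|_{A_i}$ and then propagated through the sum over $\NC(\ell)$.
\end{itemize}

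\textbf{The missing compatibility lemma.} Define $\varphi$ on the tensor algebra over $\bigoplus_i A_i$; there each letter carries a label $i$, and multilinearity makes $\varphi$ well defined. Then show that $\varphi$ kills the ideal generated by $a\otimes b-ab$ for $a,b\in A_i$, and by $1_{A_i}-1$.

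For the product relation, let $\sigma_j$ be the partition of $[r]$ whose only non-singleton block is $\{j,j+1\}$, and let $1_r$ be the one-block partition. For a tuple with all labels equal to $i$, the product formula for the genuine cumulants $K|_{A_i}$ of $(A_i,\tau|_{A_i})$ gives
\[
\kappa_{r-1}(a_1,\dots,a_ja_{j+1},\dots,a_r)=\sum_{\pi\in\NC(r),\ \pi\vee\sigma_j=1_r}\kappa_\pi(a_1,\dots,a_r).
\]
For a mixed tuple both sides vanish. Indeed, $\sigma_j$ only joins two positions of the same color, so no $\pi$ with monochromatic blocks satisfies $\pi\vee\sigma_j=1_r$.

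Now group $\pi\in\NC(\ell)$ according to $\pi\vee\sigma_j$. Merging the blocks of two adjacent points keeps a partition non-crossing. Hence $\pi\vee\sigma_j$ ranges over the partitions obtained from some $\rho\in\NC(\ell-1)$ by doubling the point $j$. The fibre over such a partition consists exactly of the splittings of its block $B\ni j,j+1$ into some $\pi_B\in\NC(B)$ with $\pi_B\vee\sigma_j=1_B$, and these are precisely the terms of the display above. This yields $\sum_{\pi\in\NC(\ell)}\kappa_\pi(\dots,a_j,a_{j+1},\dots)=\sum_{\rho\in\NC(\ell-1)}\kappa_\rho(\dots,a_ja_{j+1},\dots)$.

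For the unit relation you need that cumulants of order $\geq 2$ with an entry equal to $1$ vanish, which you never mention. The same fact is what makes your original definition of $\kappa$ consistent for scalar arguments, since scalars lie in every $A_i$. Also, state the uniqueness lemma for unital linear functionals rather than states: $\varphi$ is not known to be positive, although the lemma's proof never uses positivity.

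\textbf{Comparison with the cited proof.} With this lemma your argument is complete. As a by-product it constructs the free product of the functionals $\tau|_{A_i}$ on $P$ purely combinatorially, without using the ambient $M$. The proof in Nica--Speicher uses the same two ingredients more economically, by induction on $\ell$ directly for $\tau$:
\begin{itemize}
\item The product formula reduces to the case where neighbours come from different algebras. The correction terms $\pi\neq 1_\ell$ with $\pi\vee\sigma_j=1_\ell$ have exactly two blocks, at least one of them mixed, so they vanish by induction.
\item Vanishing of cumulants with an entry $1$ reduces to centered entries $y_1,\dots,y_\ell$.
\item Then
\[
0=\tau(y_1\cdots y_\ell)=K_\ell(y_1,\dots,y_\ell)+\sum_{\pi\neq 1_\ell}K_\pi(y_1,\dots,y_\ell),
\]
and your interval-block argument kills the sum.
\end{itemize}
This route avoids the free product, the uniqueness lemma, and the fibre bijection altogether.
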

	
	\begin{proposition} \label{prop: cumulants of semicircular}
	Let $(S_1,\dots,S_m)$ be a standard free semicircular family.  Let $i_1$, \dots, $i_\ell \in [m]$.  Then $K(X_{i_1},\dots,X_{i_\ell}) = \delta_{\ell=2, i_1=i_2}$.
	\end{proposition}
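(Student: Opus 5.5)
The plan is to compute the moments of the family directly and match them against the moment–cumulant formula. Since cumulants are determined uniquely by moments through Möbius inversion, it is enough to exhibit multilinear maps that reproduce all mixed moments of the $S_i$. (The statement writes $X_{i_j}$ for $S_{i_j}$; I read it that way.)

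First, treat a single standard semicircular $S_1$. Its moments are the Catalan numbers: $\tau(S_1^{\ell})$ equals the number of non-crossing pair partitions of $[\ell]$. Because $K_\pi$ is a product over blocks, setting $K_2(S_1,S_1)=1$ and all other $K_k(S_1,\dots,S_1)=0$ gives
\[
\sum_{\pi\in\NC(\ell)}K_\pi(S_1,\dots,S_1)=\#\{\pi\in\NC(\ell):\pi\text{ a pair partition}\}=\tau(S_1^\ell).
\]
By uniqueness, these are the free cumulants of $S_1$.

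Next, handle mixed indices $i_1,\dots,i_\ell$ that are not all equal. Take $A_i$ to be the $*$-subalgebra generated by $S_i$. These are freely independent, so Theorem \ref{thm: freeness and cumulants} gives $K_\ell(S_{i_1},\dots,S_{i_\ell})=0$. When all indices are equal, the single-variable computation applies to $S_i$. Combining the two cases, $K_\ell(S_{i_1},\dots,S_{i_\ell})$ equals $1$ exactly when $\ell=2$ and $i_1=i_2$, and $0$ otherwise.

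The only real work is verifying the Catalan identity $\tau(S^{2k})=\frac{1}{k+1}\binom{2k}{k}$ for the semicircle density. This is a standard integral, done either by the substitution $t=2\cos\theta$ or by checking the Catalan recursion via integration by parts. Odd moments vanish by symmetry, and there are no non-crossing pair partitions of an odd set, so the odd case matches as well. I do not expect any genuine obstacle.
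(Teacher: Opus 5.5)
Your proof is correct. The paper states this proposition without proof as standard background from \cite{NS2006}, and your argument is exactly the standard one: Catalan moments plus uniqueness of cumulants for a single variable, then vanishing of mixed cumulants via Theorem \ref{thm: freeness and cumulants}. Uniqueness does apply on the diagonal alone, since the moment--cumulant relation there is triangular with $K_\ell(S_1,\dots,S_1)$ entering with coefficient one.
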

	
	\subsection{Classical entropy, Fisher information, and Wasserstein distance}
	
	Before delving in to the definition of free entropy, we recall some background on classical entropy and the dimensionally normalized version associated to $n \times n$ matrix tuples.  The material in this section is classical and well known.  Some of the exposition overlaps with \cite[\S 3]{JekelPi2024}, to which we refer for additional detail and citations.
	
	\begin{definition}[Differential entropy]
		Let $\mu$ be a probability measure on $\R^m$ with absolutely continuous density $\rho$ with respect to Lebesgue measure, i.e. $d\mu = \rho \, dx$. The \textit{entropy} of $\mu$ is defined to be
		\begin{equation}
		h(\mu) := \int_{\R^m} - \rho \log \rho \, dx,
		\end{equation}
		whenever the integral is defined. If $\mu$ does not have a density $\rho$, then we set $h(\mu) := - \infty$. 
		If $X$ is a random variable with distribution $\mu$, then we also write $h(X)$ in place of $h(\mu)$.
	\end{definition}
	
	\begin{definition}[Fisher information]
		Suppose that $\mu$ is a probability measure on $\R^m$ with $C^1$ density $\rho$.  The \emph{Fisher information} of $\mu$ is defined as
		\begin{equation}
		\mathcal{I}(\mu) := \int_{\R^m} |\nabla \rho/ \rho|^2\,d\mu.
		\end{equation}
		If $X \sim \mu$ is a random variable and $\Xi = -(\nabla \rho / \rho)(X)$ is called its \emph{score function} and
		\begin{equation}
		\mathcal{I}(X) = \mathbb{E} |\Xi|^2.
		\end{equation}
	\end{definition}
	
	\begin{proposition}[de Bruijn identity]
		Let $X$ be a random variable in $\R^m$ with finite second moment and distribution $\mu$.  Let $X_t = X + Z_t$ where $Z_t$ is a standard Brownian motion on $\R^m$ (equivalently, its distribution $\mu_t$ is the convolution of $\mu$ with the Gaussian measure of covariance $tI$).  Then $X_t$ has a smooth density, and
		\begin{equation} \label{eq: de Bruijn finite}
		\frac{d}{dt} h(X_t) = \frac{1}{2} \mathcal{I}(X_t) \text{ for } t > 0.
		\end{equation}
		Moreover,
		\begin{equation} \label{eq: de Bruijn infinite}
			h(X) = \frac{1}{2} \int_0^\infty \left[ \frac{m}{1+t} - \mathcal{I}(X_t) \right] \,dt + \frac{m}{2} \log(2\pi e).
		\end{equation}
	\end{proposition}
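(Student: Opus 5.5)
We prove the de Bruijn identity in two steps: first the differential identity \eqref{eq: de Bruijn finite} via the heat equation and integration by parts, then the integrated identity \eqref{eq: de Bruijn infinite} by comparing with a Gaussian of the same variance growth.

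\textbf{Step 1: regularity and the heat equation.} For $t > 0$, the law $\mu_t = \mu * \gamma_t$ (with $\gamma_t$ the centered Gaussian of covariance $tI$) has density $\rho_t(x) = \int \varphi_t(x-y)\,d\mu(y)$. This density is smooth and strictly positive. It solves $\partial_t \rho_t = \frac12 \Delta \rho_t$, and derivatives can be taken under the integral because $\varphi_t$ and its derivatives are uniformly bounded on compact $t$-intervals in $(0,\infty)$.

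\textbf{Step 2: the derivative of the entropy.} Formally,
\[
\frac{d}{dt} h(X_t) = -\int (1+\log\rho_t)\,\partial_t\rho_t\,dx = -\frac12\int \log\rho_t\, \Delta\rho_t\,dx = \frac12\int \frac{|\nabla\rho_t|^2}{\rho_t}\,dx = \frac12 \mathcal{I}(X_t).
\]
This uses that $\int \Delta\rho_t\,dx = 0$, followed by one integration by parts.

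The main obstacle is justifying this computation: differentiating under the integral and discarding boundary terms. My plan is to use the following bounds.
\begin{itemize}
\item A lower bound $\rho_t(x) \ge c\, e^{-(|x|^2+C)/t}$, obtained by applying Jensen or restricting to a ball carrying half the mass of $\mu$. This makes $|\log\rho_t| \lesssim 1+|x|^2$.
\item The representation $\nabla\rho_t/\rho_t = \mathbb{E}[(Y-x)/t \mid X_t = x]$, which gives $\mathcal{I}(X_t) \le \mathbb{E}|Z_t|^2/t^2 = m/t < \infty$.
\item Gaussian decay of $\rho_t$ and its derivatives, which holds because $\mu$ has finite second moment.
\end{itemize}
With these, I would integrate by parts over a ball $B_R$ and let $R \to \infty$, and the boundary terms vanish. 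I would also verify the formula in its integrated form $h(X_{t_2}) - h(X_{t_1}) = \frac12\int_{t_1}^{t_2}\mathcal{I}(X_s)\,ds$ via dominated convergence, which avoids any pointwise issues.

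\textbf{Step 3: the integrated identity \eqref{eq: de Bruijn infinite}.} Write $G_s$ for a Gaussian with covariance $sI$, so that $h(G_s) = \frac m2\log(2\pi e s)$ and $\mathcal{I}(G_s) = m/s$.

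Integrating Step 2 from $\epsilon$ to $T$ gives
\[
h(X_T) - h(X_\epsilon) = \frac12\int_\epsilon^T \mathcal{I}(X_s)\,ds.
\]
Adding and subtracting the Gaussian reference $\frac12\int_\epsilon^T \frac{m}{1+s}\,ds = \frac m2\log\frac{1+T}{1+\epsilon}$, this becomes
\[
h(X_\epsilon) = h(X_T) - \frac m2\log\frac{1+T}{1+\epsilon} + \frac12\int_\epsilon^T\Big[\frac{m}{1+s} - \mathcal{I}(X_s)\Big]\,ds.
\]

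For the limit $T\to\infty$, I would use two facts.
\begin{itemize}
\item \emph{Upper bound.} Since $X_T$ has covariance $\Sigma + TI$, where $\Sigma$ is the covariance of $X$, the Gaussian maximizes entropy at fixed covariance, so $h(X_T) \le \frac m2\log(2\pi e(T+\|\Sigma\|))$.
\item \emph{Lower bound.} Convolution increases entropy ($h(X + G) \ge h(G)$ for independent $G$), so $h(X_T) \ge h(G_T) = \frac m2\log(2\pi e T)$.
\end{itemize}
Together these give $h(X_T) - \frac m2\log(1+T) \to \frac m2\log(2\pi e)$.

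For the limit $\epsilon\to0$, I would use upper semicontinuity together with the same convolution monotonicity, which gives $h(X_\epsilon)\to h(X)$. This covers the case $h(X) = -\infty$ as well.

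Convergence of the integral as $\epsilon \to 0$ is by monotone convergence. Near $\infty$ the integrand has a sign issue, but convergence follows from the existence of the other limits.
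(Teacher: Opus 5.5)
The paper gives no proof of this proposition. It is quoted as classical background, with a pointer to \cite[\S 3]{JekelPi2024}, so there is no route to compare against. Your argument is the standard one, and it is correct in outline: heat equation plus one integration by parts for the derivative; sandwiching $h(X_T)$ between $h(G_T)$ and the Gaussian with covariance $\Sigma+TI$ as $T\to\infty$; and upper semicontinuity plus $h(X_\epsilon)\ge h(X)$ as $\epsilon\to 0$. One supporting claim needs to be fixed.

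\textbf{The third bullet of Step 2 is false as stated.} Finite second moment does not give Gaussian decay of $\rho_t$ or its derivatives. If $\mu$ has density proportional to $(1+|x|)^{-m-3}$, then $\mu$ has finite variance, but restricting the convolution to $|y-x|\le 1$ gives $\rho_t(x)\gtrsim(2+|x|)^{-m-3}$. So $\rho_t$ decays only polynomially.

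You do not need Gaussian decay. What the justification actually uses is
\[
\int_{\R^m}(1+|x|^2)\bigl(|\nabla\rho_t(x)|+|\Delta\rho_t(x)|\bigr)\,dx<\infty,
\]
with a single dominating function on each compact $t$-interval in $(0,\infty)$. This follows directly from $\mathbb{E}|X|^2<\infty$:
\begin{itemize}
\item write $\rho_t=\mu*\varphi_t$;
\item bound $|\nabla\varphi_t(z)|+|\Delta\varphi_t(z)|\lesssim(|z|/t+|z|^2/t^2+m/t)\varphi_t(z)$;
\item substitute $x=y+z$ and use $1+|y+z|^2\le 1+2|y|^2+2|z|^2$.
\end{itemize}
Combined with your bound $|\log\rho_t(x)|\lesssim 1+|x|^2$, this dominates $(1+\log\rho_t)\Delta\rho_t$, so you may differentiate under the integral at every $t>0$. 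By Fubini in polar coordinates, the boundary terms $\int_{\partial B_R}|\log\rho_t|\,|\nabla\rho_t|\,dS$ tend to $0$ along a sequence $R_k\to\infty$, which is all the integration by parts needs.

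\textbf{Two smaller remarks.}

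For $\epsilon\to0$, justify upper semicontinuity by writing $h(\nu)=-h(\nu\|\gamma)+\frac12\int|x|^2\,d\nu+\frac m2\log(2\pi)$. Then use lower semicontinuity of relative entropy under weak convergence, together with $\mathbb{E}|X_\epsilon|^2=\mathbb{E}|X|^2+m\epsilon\to\mathbb{E}|X|^2$. This also covers the case where $\mu$ has no density.

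At infinity there is no real sign issue. Cram\'er--Rao, i.e.\ Cauchy--Schwarz applied to $\mathbb{E}\langle\Xi,X_s-\mathbb{E}X_s\rangle=m$, combined with your bound $\mathcal{I}(X_s)\le m/s$, gives $m^2/(\operatorname{tr}\Sigma+ms)\le\mathcal{I}(X_s)\le m/s$. Hence the integrand in \eqref{eq: de Bruijn infinite} is $O(s^{-2})$, and the integral over $[1,\infty)$ converges absolutely, not merely as an improper limit.
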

	
	Although $\chi$ and $\chi^*$ are formulated to be analogs of entropy $h$ with respect to Lebesgue measure, it will also be convenient to use entropy with respect to another background probability measure, known as \emph{relative entropy} or \emph{KL-divergence}.  Due to the mixture of sign conventions in the literature, we will use the opposite sign here to arrange that the relative entropy is nonnegative (this well-known fact follows from Jensen's inequality).
	
	\begin{definition}
		For probability measures $\mu$ and $\nu$ on $\R^m$, let
		\begin{equation}
		h(\mu || \nu) = \int_{\R^m} \rho \log \rho \, d\nu \text{ when } d\mu = \rho\,d\nu,
		\end{equation}
		and $+\infty$ when $\mu$ does not have a density with respect to $\nu$.
	\end{definition}
	
	\begin{observation}
		Let $\gamma$ be the standard Gaussian measure on $\R^m$ of covariance $I$.  Let $\mu$ be a probability measure on $\R^m$.  Then
		\begin{equation}
		h(\mu || \gamma) = \frac{1}{2} \int_{\R^m} |x|^2\,d\mu(x) - h(\mu) + \frac{m}{2} \log(2\pi).
		\end{equation}
		We will also denote $h(\mu || \gamma)$ as $h_g(\mu)$ for short (`g' indicating ``Gaussian'').
	\end{observation}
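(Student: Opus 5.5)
We expand the definitions directly. Write $d\mu = \rho_\mu\,dx$ and $d\gamma = \phi\,dx$ with $\phi(x) = (2\pi)^{-m/2} e^{-|x|^2/2}$. If $\mu$ has no Lebesgue density, then it has no density with respect to $\gamma$ either, since $\gamma$ and Lebesgue measure are mutually absolutely continuous. In that case both sides equal $+\infty$, using the convention $h(\mu) = -\infty$ and assuming finite second moment so that the right-hand side is well defined. So we may assume $\mu$ has a Lebesgue density.

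When $\mu \ll dx$, the density of $\mu$ with respect to $\gamma$ is $\rho = \rho_\mu/\phi$. We then compute
\begin{equation*}
h(\mu\|\gamma) = \int \frac{\rho_\mu}{\phi} \log\frac{\rho_\mu}{\phi}\,\phi\,dx = \int \rho_\mu \log \rho_\mu\,dx - \int \rho_\mu \log \phi\,dx.
\end{equation*}
The first term is $-h(\mu)$. Since
\begin{equation*}
-\log\phi(x) = \frac{|x|^2}{2} + \frac{m}{2}\log(2\pi),
\end{equation*}
the second term equals $\frac12\int |x|^2\,d\mu + \frac m2 \log(2\pi)$. Adding the two gives the stated identity.

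The only delicate step is justifying the splitting of the integral, since each piece could a priori be infinite. If $\int |x|^2\,d\mu < \infty$, then $\int \rho_\mu \log\phi\,dx$ is finite, so the splitting is valid whenever either side is defined. Moreover, $h(\mu) = +\infty$ cannot occur, because Gaussian maximization of entropy bounds $h(\mu)$ above by a function of the second moment. Hence the identity holds in $(-\infty,+\infty]$, with both sides equal to $+\infty$ exactly when $h(\mu) = -\infty$.

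If the second moment is infinite, the identity is read with the natural conventions. This edge case is the only real obstacle, and I would simply restrict to measures with finite second moment, which is all that is used later.
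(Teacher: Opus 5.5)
Your proposal is correct. It is exactly the direct computation the paper leaves implicit, since the paper states this observation without proof: take the density $\rho_\mu/\phi$ with respect to $\gamma$, then split off $-\log\phi = \tfrac12|x|^2 + \tfrac m2\log(2\pi)$. Restricting to finite second moment is appropriate, because otherwise the right-hand side can take the form $\infty - \infty$, and the paper only uses the identity in that setting.
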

	
	\begin{definition}
		For a probability measure $\mu$ with $C^1$ density $\rho$ with respect to Gaussian measure, let
		\begin{equation}
		\mathcal{I}_g(\mu) := \int_{\R^m} |\nabla \rho / \rho|^2\,d\mu.
		\end{equation}
	\end{definition}
	
	\begin{observation}
		If $\widehat{\rho}$ is the density with respect to Lebesgue measure, $X \sim \mu$, and $\Xi = -(\nabla \widehat{\rho} / \widehat{\rho})(X)$ is the score function, $\log \rho(x) = \log \widehat{\rho}(x) + \frac{1}{2} |x|^2 + \text{constant}$, and so
		\begin{equation}
		\mathcal{I}_g(\mu) = \mathcal{I}_g(X) = \mathbb{E} |\Xi - X|^2 = \mathbb{E} |\Xi|^2 - 2m + \mathbb{E} |X|^2,
		\end{equation}
		where we have applied the identity $\mathbb{E} \ip{\Xi, X} = m$ from integration by parts.
	\end{observation}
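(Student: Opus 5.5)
The statement just above is the Observation expressing $\mathcal{I}_g(X)$ in terms of the Lebesgue score function $\Xi$. I will prove it by a direct computation.

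\textbf{Step 1: relate the two densities.} Write $\widehat{\rho}$ for the Lebesgue density of $\mu$ and $\rho$ for its density with respect to $\gamma$. Since
\begin{equation*}
d\gamma(x) = (2\pi)^{-m/2} e^{-|x|^2/2}\,dx,
\end{equation*}
we have $\widehat{\rho}(x) = \rho(x)(2\pi)^{-m/2}e^{-|x|^2/2}$. Hence
\begin{equation*}
\log \rho = \log \widehat{\rho} + \tfrac{1}{2}|x|^2 + \tfrac{m}{2}\log(2\pi).
\end{equation*}
Differentiating gives $\nabla\rho/\rho = \nabla\widehat{\rho}/\widehat{\rho} + x = x - \Xi$ at $x = X$. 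Therefore
\begin{equation*}
\mathcal{I}_g(X) = \mathbb{E}|X - \Xi|^2.
\end{equation*}

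\textbf{Step 2: expand the square.} Expanding,
\begin{equation*}
\mathbb{E}|X - \Xi|^2 = \mathbb{E}|\Xi|^2 - 2\,\mathbb{E}\ip{\Xi, X} + \mathbb{E}|X|^2,
\end{equation*}
so it remains to show $\mathbb{E}\ip{\Xi,X} = m$.

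\textbf{Step 3: integration by parts.} Formally,
\begin{equation*}
\mathbb{E}\ip{\Xi,X} = -\int \ip{\nabla\widehat{\rho}, x}\,dx = \int \widehat{\rho}\,\operatorname{div}(x)\,dx = m.
\end{equation*}
This step is the main obstacle, since it needs the boundary terms to vanish and the integrals to be finite.

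To justify it, I would assume (as implicit in the statement) that $\mathbb{E}|X|^2 < \infty$ and $\mathcal{I}(X) < \infty$. Then $\ip{\Xi, X}$ is integrable by Cauchy--Schwarz. Next I would insert a cutoff $\phi_R(x) = \phi(x/R)$ and compute
\begin{equation*}
-\int \ip{\nabla\widehat{\rho}, x}\,\phi_R\,dx = \int \widehat{\rho}\,\bigl(m\phi_R + \ip{x, \nabla\phi_R}\bigr)\,dx.
\end{equation*}
The term $\ip{x,\nabla\phi_R}$ is bounded uniformly in $R$ and supported on $\{|x| \geq R\}$. By dominated convergence, letting $R \to \infty$ yields $m$.

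In the Gaussian-smoothed settings where the observation is applied, $\widehat{\rho}$ is smooth with Gaussian tails, so the argument goes through directly. Combining Steps 1--3 gives the identity.
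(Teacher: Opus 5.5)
Your proposal is correct and follows the same route as the paper's one-line argument: relate $\rho$ and $\widehat{\rho}$ through the Gaussian density, expand $\mathbb{E}|\Xi - X|^2$, and use $\mathbb{E}\ip{\Xi, X} = m$ from integration by parts. Your cutoff argument, with domination via Cauchy--Schwarz under finite second moment and finite Fisher information, supplies the justification the paper leaves implicit.
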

	
	There is a corresponding de Bruijn identity for entropy with respect to Gaussian (and for relative entropy more generally).
	
	\begin{proposition}
		Fix $\mu \in \mathcal{P}(\mathbb{R}^m)$ with finite second moment and $X \sim \mu$.  Let $Z \sim \gamma$ be an independent Gaussian and
		\begin{equation} \label{eq: Gaussian Langevin Xt}
		X_t = e^{-t/2} X + (1 - e^{-t})^{1/2}Z.
		\end{equation}
		Then
		\begin{equation}
			\frac{d}{dt} h_g(X_t) = -\frac{1}{2} \mathcal{I}_g(X_t).
		\end{equation}
		and
		\begin{equation} \label{eq: de Bruijn infinite Gaussian}
			h_g(X) = \frac{1}{2} \int_0^\infty \mathcal{I}_g(X_t).
		\end{equation}
	\end{proposition}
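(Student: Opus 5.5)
The plan is to reduce the Gaussian de Bruijn identity to the Lebesgue one, \eqref{eq: de Bruijn finite}, by a time change and a scaling. Write $X_t = e^{-t/2}\bigl(X + (e^{t}-1)^{1/2} Z\bigr)$. Set $s = e^t - 1$ and $W_s = X + s^{1/2}Z$; note that $W_s$ has the law of $X$ plus an independent Brownian motion at time $s$. Then $X_t = e^{-t/2} W_{s(t)}$.

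First I will record the scaling rules. For $a>0$ we have $h(aW) = h(W) + m\log a$ and $\mathcal{I}(aW) = a^{-2}\mathcal{I}(W)$. The first follows from the change of variables in the density, and the second holds because the score of $aW$ at $ax$ is $a^{-1}$ times the score of $W$ at $x$. Using these together with the observation $h_g(\mu) = \frac12 \int |x|^2\,d\mu - h(\mu) + \frac m2\log(2\pi)$, I write
\[
h_g(X_t) = \tfrac12 \mathbb{E}|X_t|^2 - h(W_{s}) + \tfrac{m t}{2} + \tfrac m2 \log(2\pi).
\]
Next I differentiate each term in $t$. Since $\mathbb{E}|X_t|^2 = e^{-t}\mathbb{E}|X|^2 + (1-e^{-t})m$, its derivative is $e^{-t}(m - \mathbb{E}|X|^2) = m - \mathbb{E}|X_t|^2$. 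For the entropy term, \eqref{eq: de Bruijn finite} and $ds/dt = e^t$ give $\frac{d}{dt} h(W_s) = \frac12 e^t \mathcal{I}(W_s) = \frac12 \mathcal{I}(X_t)$, where the last step uses the scaling rule with $a = e^{-t/2}$. Altogether,
\[
\frac{d}{dt} h_g(X_t) = \tfrac12\bigl(m - \mathbb{E}|X_t|^2\bigr) - \tfrac12 \mathcal{I}(X_t) + \tfrac m2 = -\tfrac12\bigl(\mathcal{I}(X_t) - 2m + \mathbb{E}|X_t|^2\bigr) = -\tfrac12 \mathcal{I}_g(X_t),
\]
where the final equality is the observation relating $\mathcal{I}_g$ to $\mathcal{I}$. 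This proves the differential identity for $t>0$.

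For \eqref{eq: de Bruijn infinite Gaussian}, I integrate from $\varepsilon$ to $T$ to get $h_g(X_\varepsilon) - h_g(X_T) = \frac12\int_\varepsilon^T \mathcal{I}_g(X_t)\,dt$. Since $\mathcal{I}_g \ge 0$, the integrand is nonnegative, so monotone convergence handles the integral as $\varepsilon \to 0$ and $T \to \infty$. The main obstacle is the behavior of the two boundary terms.

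At $T\to\infty$, I need $h_g(X_T)\to 0$. Convexity of relative entropy along the coupling gives $h(X_T\|\gamma) \le \mathbb{E}_X\, h\bigl(\mathcal N(e^{-T/2}X,(1-e^{-T})I)\,\|\,\gamma\bigr)$. The right-hand side is an explicit Gaussian KL divergence that tends to $0$ because $X$ has finite second moment.

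At $\varepsilon\to 0$, I need $h_g(X_\varepsilon)\to h_g(X)$. The inequality $\limsup h_g(X_\varepsilon) \le h_g(X)$ follows from the data-processing (convexity) inequality, since $X_\varepsilon$ is a Gaussian-channel image of $X$ and $\gamma$ is invariant under that channel. The inequality $\liminf h_g(X_\varepsilon) \ge h_g(X)$ follows from lower semicontinuity of relative entropy under weak convergence, since $X_\varepsilon \to X$ weakly. This argument also covers the case $h_g(X) = +\infty$, in which both sides of \eqref{eq: de Bruijn infinite Gaussian} are infinite.
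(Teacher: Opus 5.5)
Your proof is correct. It follows the paper for the differential identity but differs for the integrated one.

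\textbf{Differential identity.} Your computation is the same time change $s=e^t-1$ and rescaling by $e^{-t/2}$ that the paper records in Observation \ref{obs: entropy conversion}, namely $\mathcal{I}(X+(e^t-1)^{1/2}Z)=e^{-t}\mathcal{I}(X_t)$.

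\textbf{Integrated identity \eqref{eq: de Bruijn infinite Gaussian}.} The paper never integrates the differential identity. It takes the Lebesgue identity \eqref{eq: de Bruijn infinite} as known and substitutes $u=e^t-1$ in that integral. It then rewrites the integrand as $-\mathcal{I}_g(X_t)-me^{-t}+e^{-t}\mathbb{E}|X|^2$ and integrates the two explicit terms. All endpoint behavior is thus inherited from the classical Lebesgue statement.

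You instead integrate over $[\varepsilon,T]$ and control both endpoints yourself, in the relative-entropy formulation:
\begin{itemize}
\item At $\varepsilon\to0$, data processing gives $h_g(X_\varepsilon)\le h_g(X)$, using invariance of $\gamma$ under the Ornstein--Uhlenbeck channel. Weak lower semicontinuity of $h(\cdot\,\|\,\gamma)$ gives the reverse inequality in the limit.
\item At $T\to\infty$, convexity of KL plus the explicit Gaussian KL divergence gives $h_g(X_T)\to 0$.
\end{itemize}
The integration step itself is fine. $t\mapsto h_g(X_t)$ is differentiable at every $t>0$ and nonincreasing, so its derivative is integrable on $[\varepsilon,T]$ and the fundamental theorem of calculus applies.

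\textbf{Comparison.} The paper's route is shorter, given \eqref{eq: de Bruijn infinite} as a black box. Yours needs only the finite-time identity \eqref{eq: de Bruijn finite} and standard properties of KL divergence. Its endpoint limits are cleaner than in the Lebesgue setting because $\gamma$ is a fixed reference measure invariant under the flow. It also handles $h_g(X)=+\infty$ (e.g.\ $\mu$ singular) automatically through lower semicontinuity.
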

	
	\begin{observation} \label{obs: entropy conversion}
	In fact, \eqref{eq: de Bruijn infinite Gaussian} can be deduced from \eqref{eq: de Bruijn infinite} by a change of variables.  Indeed, if $X_t$ is given by \eqref{eq: Gaussian Langevin Xt}, then
	\begin{equation}
		X_t = e^{-t/2} (X + (e^t - 1)^{1/2} Z),
	\end{equation}
	and using a change of variables in the integral to define to Fisher information
	\begin{align}
		\mathcal{I}(X + (e^t-1)^{1/2}Z) &= \mathcal{I}(e^{t/2} X_t) \\
		&= e^{-t} \mathcal{I}(X_t) \\
		&= e^{-t} (\mathcal{I}_g(X_t) + 2m - \mathbb{E} |X_t|^2 ) \\
		&= e^{-t} (\mathcal{I}_g(X_t) + 2m - e^{-t} \mathbb{E} |X|^2 - (1 - e^{-t}) \mathbb{E}|Z|^2) \\
		&= e^{-t} (\mathcal{I}_g(X_t) + m(1 + e^{-t}) - e^{-t} \mathbb{E} |X|^2),
	\end{align}
	so taking $u = e^t - 1$,
	\begin{align}
		\frac{1}{2} \int_0^\infty \left[ \frac{m}{u+1} - \mathcal{I}(X + u^{1/2}Z) \right] &=
		\frac{1}{2} \int_0^\infty \left[ m e^{-t} - \mathcal{I}(X + (e^t - 1)^{1/2}Z) \right] e^t\,dt \\
		&= -\frac{1}{2} \int_0^\infty \mathcal{I}_g(X_t) \,dt - \frac{1}{2} \int_0^\infty m e^{-t}\,dt + \frac{1}{2} \int_0^\infty e^{-t} E|X|^2\,dt \\
		&= \frac{1}{2} \mathbb{E}|X|^2 - \frac{1}{2} \int_0^\infty \mathcal{I}_g(X_t) \,dt - \frac{m}{2}.
	\end{align}
	From this, we see that \eqref{eq: de Bruijn infinite} is equivalent to \eqref{eq: de Bruijn infinite Gaussian}.
	\end{observation}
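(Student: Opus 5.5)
The last statement is Observation \ref{obs: entropy conversion}: the Lebesgue de Bruijn formula \eqref{eq: de Bruijn infinite} and the Gaussian one \eqref{eq: de Bruijn infinite Gaussian} are equivalent. The plan is a change of variables in both the process and the time parameter, then a direct comparison of the two integrals.

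First I rewrite the Ornstein--Uhlenbeck process $X_t = e^{-t/2}X + (1-e^{-t})^{1/2}Z$ as a rescaled heat flow, $X_t = e^{-t/2}(X + (e^t-1)^{1/2}Z)$. Under a linear map $x \mapsto \lambda x$ the score function scales by $\lambda^{-1}$, so the Fisher information scales by $\lambda^{-2}$. With $\lambda = e^{t/2}$ this gives $\mathcal{I}(X + (e^t-1)^{1/2}Z) = e^{-t}\mathcal{I}(X_t)$.

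Next I convert to Gaussian Fisher information using the preceding observation, $\mathcal{I}(X_t) = \mathcal{I}_g(X_t) + 2m - \mathbb{E}|X_t|^2$. By independence, $\mathbb{E}|X_t|^2 = e^{-t}\mathbb{E}|X|^2 + (1-e^{-t})m$. Hence
\[
\mathcal{I}(X + (e^t-1)^{1/2}Z) = e^{-t}\bigl(\mathcal{I}_g(X_t) + m(1+e^{-t}) - e^{-t}\mathbb{E}|X|^2\bigr).
\]

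In \eqref{eq: de Bruijn infinite} I substitute $u = e^t - 1$, so $du = e^t\,dt$ and $m/(1+u) = me^{-t}$. The integrand times $e^t$ is then $m - \mathcal{I}_g(X_t) - m(1+e^{-t}) + e^{-t}\mathbb{E}|X|^2$. The constant terms $m$ cancel, and the remaining elementary integrals evaluate to
\[
\frac12\mathbb{E}|X|^2 - \frac12\int_0^\infty \mathcal{I}_g(X_t)\,dt - \frac m2 .
\]
So \eqref{eq: de Bruijn infinite} says exactly that $h(X)$ equals this expression plus $\frac m2\log(2\pi e)$. Rearranging with $h_g(X) = \frac12\mathbb{E}|X|^2 - h(X) + \frac m2\log(2\pi)$, the terms $\frac12\mathbb{E}|X|^2$ and the constants $\frac m2\log(2\pi e) - \frac m2 - \frac m2\log(2\pi) = 0$ cancel. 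What remains is precisely \eqref{eq: de Bruijn infinite Gaussian}. Every step is reversible, so the two identities are equivalent.

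The one point needing care is splitting the integral. The pieces $\int_0^\infty \mathcal{I}_g(X_t)\,dt$, $\int_0^\infty me^{-t}\,dt$ and $\int_0^\infty e^{-t}\mathbb{E}|X|^2\,dt$ must be justified separately. The last two converge trivially, and $\mathcal{I}_g(X_t)\ge 0$, so the first is well defined in $[0,\infty]$. The splitting is therefore legitimate, including the case where both sides equal $-\infty$ when $h(X) = -\infty$. Finite second moment is assumed throughout, so no other obstacles arise.
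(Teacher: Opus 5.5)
Your proposal is correct and follows the paper's argument essentially step for step: you rewrite $X_t$ as a rescaled heat flow, use the $e^{-t}$ scaling of Fisher information, convert to $\mathcal{I}_g$ via $\mathbb{E}|X_t|^2 = e^{-t}\mathbb{E}|X|^2 + (1-e^{-t})m$, and substitute $u = e^t - 1$. Your explicit constant bookkeeping with $h_g(X) = \frac12\mathbb{E}|X|^2 - h(X) + \frac m2\log(2\pi)$ and your justification for splitting the integral (using $\mathcal{I}_g \geq 0$) supply details that the paper leaves implicit.
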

	
	Finally, we recall the $L^2$-Wasserstein distance (for further background, see e.g.\ \cite{Villani2008}).
	
	\begin{definition}
		For probability measures $\mu$, $\nu$ on $\R^m$, the $L^2$-Wasserstein distance is defined as
		\begin{equation} \label{eq: definition of W2}
			W_2(\mu,\nu) = \inf \{ \norm{X - Y}_{L^2}: X \sim \mu, Y \sim \nu \},
		\end{equation}
		where we consider all possible pairs of random variables $(X,Y)$ on some atomless probability space with distributions $\mu$ and $\nu$ respectively.  We also write $W_2(X,Y)$ for the Wasserstein distance of the distributions of $X$ and $Y$.
	\end{definition}

	The Gaussian measure $\gamma$ satisfies important functional inequalities relating the entropy, Wasserstein distance, and Fisher information.  Note that the normalization of Fisher information here differs by a factor of $2$ from some sources.
	
	\begin{theorem} \label{thm: Gaussian functional inequalities}
		For random variables $X$ in $\R^m$ with $C^1$ density, we have the following inequalities:
		\begin{enumerate}[(1)]
			\item \emph{log-Sobolev inequality:} $h_g(X) \leq \frac{1}{2} \mathcal{I}_g(X)$ \cite{Bakry1985}, \cite{BoL2000}, \cite[\S 4.4.2]{AGZ2009}.
			\item \emph{Talagrand inequality:} $\frac{1}{2} W_2(X,Z)^2 \leq h_g(X)$ where $Z \sim \gamma$ \cite{Talagrand1996}, \cite{OttoVillani2000}.
			\item \emph{Information-cost inequality:} $W_2(X,Z) \leq \mathcal{I}_g(X)^{1/2}$.
		\end{enumerate}
		Here item (3) follows immediately from the combination of the two previous items.
	\end{theorem}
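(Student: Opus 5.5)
The statement says that the Gaussian measure satisfies the log-Sobolev, Talagrand, and information-cost inequalities. Items (1) and (2) are classical results quoted from the literature. The paper itself notes that item (3) follows from the first two, so the only thing needing an argument is how they combine.

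\emph{Items (1) and (2).} For the log-Sobolev inequality I would follow the Bakry--\'Emery semigroup argument.
\begin{itemize}
\item Run the Ornstein--Uhlenbeck flow $X_t = e^{-t/2}X + (1-e^{-t})^{1/2}Z$ from \eqref{eq: Gaussian Langevin Xt}.
\item Show that $\mathcal{I}_g(X_t)$ is nonincreasing, and in fact satisfies $\mathcal{I}_g(X_t) \le e^{-t}\mathcal{I}_g(X)$. This comes from the commutation $\nabla P_t f = e^{-t/2} P_t \nabla f$ together with Cauchy--Schwarz, which is the curvature-one condition.
\item Insert this bound into the Gaussian de Bruijn identity \eqref{eq: de Bruijn infinite Gaussian}:
\[
h_g(X) = \frac12\int_0^\infty \mathcal{I}_g(X_t)\,dt \le \frac12 \mathcal{I}_g(X)\int_0^\infty e^{-t}\,dt = \frac12\mathcal{I}_g(X).
\]
\end{itemize}

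For Talagrand's inequality I would cite \cite{Talagrand1996} or the Otto--Villani argument \cite{OttoVillani2000}, which deduces it from log-Sobolev. Alternatively, it can be read off from item (3) below integrated along the flow. In short, (1) and (2) are taken as known.

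\emph{Item (3).} Chaining (2) and then (1) gives
\[
\frac12 W_2(X,Z)^2 \le h_g(X) \le \frac12 \mathcal{I}_g(X).
\]
Multiplying by $2$ and taking square roots yields $W_2(X,Z) \le \mathcal{I}_g(X)^{1/2}$.

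\emph{Main obstacle.} The only delicate point is the range of validity. The inequalities are claimed for $X$ with $C^1$ density, where $h_g(X)$ may be infinite. If $\mathcal{I}_g(X) = \infty$, there is nothing to prove. Otherwise:
\begin{itemize}
\item Finite $\mathcal{I}_g$ forces a finite second moment, via $\mathcal{I}_g(X) = \mathbb{E}|\Xi - X|^2$ together with the integration-by-parts identity.
\item So the de Bruijn identity applies, and $h_g(X)$ is finite.
\item The chain above is then legitimate.
\end{itemize}
The normalization (no factor of $2$) is consistent with the paper's convention that $\mathcal{I}_g$ has no $1/2$.
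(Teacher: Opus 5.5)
Your proposal matches the paper, which likewise quotes (1) and (2) from the literature and obtains (3) by chaining $\frac12 W_2(X,Z)^2 \le h_g(X) \le \frac12 \mathcal{I}_g(X)$. Your Bakry--\'Emery sketch of (1), via $\mathcal{I}_g(X_t) \le e^{-t}\mathcal{I}_g(X)$ and the Gaussian de Bruijn identity, is also correct in the paper's normalization. Two small cautions on your asides: as you have organized things, (3) is deduced from (2), so getting Talagrand ``from item (3)'' would be circular. Also, the identity $\mathbb{E}\langle \Xi, X\rangle = m$ already presupposes a finite second moment, so deducing $\mathbb{E}|X|^2 \le 2m + \mathcal{I}_g(X)$ from $\mathcal{I}_g(X) < \infty$ needs a truncation argument.
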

	
	We close this subsection with the normalized versions of the entropy and Fisher information for the random matrix context.  Recall the GUE measure $\gamma_m^{(n)}$ defined in Notation \ref{not: GUE tuple}, which is the $1/n^2$ rescaling of the standard Gaussian measure on $((\mathbb{M}_n)_{\sa}^m, \ip{\cdot,\cdot}_{\tr_n})$.
	
	\begin{notation} \label{not: normalized entropy and Fisher}
	Define the following normalized quantities for random variables $X$ in $(\mathbb{M}_n)_{\sa}^m$:
	\begin{align}
		h^{(n)}(X) &= \frac{1}{n^2} h(X) + m \log n \\
		h_g^{(n)}(X) &= \frac{1}{n^2} h_g(X) - m \log n \\
		\mathcal{I}^{(n)}(X) &= \frac{1}{n^4} \mathcal{I}(X) \\
		\mathcal{I}_g^{(n)}(X) &= \frac{1}{n^2} \mathcal{I}_g(X).
	\end{align}
	\end{notation}
	
	\begin{proposition}
		Let $X$ be a random variable in $(\mathbb{M}_n)_{\sa}^m$ with finite variance and let $Z$ be an independent GUE $m$-tuple.
		\begin{align}
			\frac{d}{dt} h^{(n)}(X + t^{1/2}Z) &= \frac{1}{2} \mathcal{I}^{(n)}(X + t^{1/2} Z) \text{ for } t > 0 \\
			\frac{d}{dt} h_g^{(n)}(e^{-t/2}X + (1 - e^{-t})^{1/2} Z) &= -\frac{1}{2} \mathcal{I}_g^{(n)}(e^{-t/2}X + (1 - e^{-t})^{1/2} Z) \text{ for } t > 0. \\
			\mathcal{I}_g^{(n)}(X) &= \mathcal{I}(X) - 2m + \mathbb{E} \norm{X}_2^2.
		\end{align}
		Moreover, we have
		\begin{align}
			\frac{1}{2} W_2(X,Z)^2 &\leq h_g^{(n)}(X) \\
			h_g^{(n)}(X) &\leq \frac{1}{2} \mathcal{I}_g^{(n)}(X) \\
			W_2(X,Z) &\leq \mathcal{I}_g^{(n)}(X)^{1/2}. \label{eq: normalized info cost}
		\end{align}
	\end{proposition}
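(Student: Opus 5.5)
Each claim in the proposition is the classical statement from the preceding subsection, transported to $((\mathbb{M}_n)_{\sa}^m,\ip{\cdot,\cdot}_{\tr_n})\cong\R^{mn^2}$ and rescaled. The plan is to track the scaling factor $n^2$ carefully and apply the earlier results.

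\emph{Step 1: the rescaling.} By Notation \ref{not: GUE tuple}, $Z = n^{-1}G$ where $G$ is a standard Gaussian vector in $\R^{mn^2}$ with the $\tr_n$ inner product. Set $\widetilde{X} = nX$.
\begin{itemize}
\item For entropy, $h(X) = h(\widetilde X) - mn^2\log n$. Hence $h^{(n)}(X) = n^{-2}h(\widetilde X)$.
\item For Fisher information, $\mathcal{I}(X) = n^2\mathcal{I}(\widetilde X)$, so $\mathcal{I}^{(n)}(X) = n^{-2}\mathcal{I}(\widetilde X)$.
\item We have $X + t^{1/2}Z = n^{-1}(\widetilde X + t^{1/2}G)$. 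Consequently, $\frac{d}{dt}h^{(n)}(X+t^{1/2}Z)=n^{-2}\frac{d}{dt}h(\widetilde X+t^{1/2}G)=\frac12 n^{-2}\mathcal{I}(\widetilde X + t^{1/2}G)=\frac12\mathcal{I}^{(n)}(X+t^{1/2}Z)$ by \eqref{eq: de Bruijn finite}.
\end{itemize}

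\emph{Step 2: the Gaussian versions.} Here $h_g$ and $\mathcal{I}_g$ should be read relative to the reference measure $\gamma_m^{(n)}$. Relative entropy and relative Fisher information are invariant, respectively covariant, under the joint rescaling $x\mapsto nx$. This gives
\begin{itemize}
\item $h(\mathrm{law}(X)\,\|\,\gamma^{(n)}_m) = h_g(\widetilde X)$,
\item $\mathcal{I}_g$ relative to $\gamma^{(n)}_m$ equal to $n^2\mathcal{I}_g(\widetilde X)$.
\end{itemize}
Under this reading, the normalizations in Notation \ref{not: normalized entropy and Fisher} (including the $-m\log n$ and the $n^{-2}$) are exactly what make the Gaussian de Bruijn identity for $e^{-t/2}\widetilde X+(1-e^{-t})^{1/2}G$ descend to the stated one.

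The identity $\mathcal{I}_g^{(n)}(X) = \mathcal{I}^{(n)}(X) - 2m + \mathbb{E}\norm{X}_2^2$ follows from the Observation $\mathcal{I}_g=\mathbb{E}|\Xi|^2-2m+\mathbb{E}|X|^2$ applied to $\widetilde X$ in dimension $mn^2$, divided by $n^2$. In doing so, $\mathbb{E}|\widetilde X|^2/n^2=\mathbb{E}\norm{X}_2^2$ and $2mn^2/n^2=2m$. The first term in the stated identity should read $\mathcal{I}^{(n)}(X)$.

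\emph{Step 3: the inequalities.} Apply Theorem \ref{thm: Gaussian functional inequalities} to $\widetilde X$ in dimension $mn^2$. Talagrand gives $\frac12W_2(\widetilde X,G)^2\le h_g(\widetilde X)$. Since $W_2(\widetilde X,G)=nW_2(X,Z)$ for the $\tr_n$-norm, dividing by $n^2$ yields $\frac12W_2(X,Z)^2\le n^{-2}h_g(\widetilde X)$. The log-Sobolev inequality divides through by $n^2$ in the same way, and the information-cost inequality \eqref{eq: normalized info cost} is the combination of the two.

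\emph{Main obstacle.} The only real difficulty is the bookkeeping of normalization constants: the additive $\pm m\log n$ terms and the factors $n^2$ versus $n^4$. These must be pinned down from the explicit density in Notation \ref{not: GUE tuple}, so that each normalized quantity matches its classical counterpart for $\widetilde X$ exactly. Once that dictionary is fixed, every claim is the classical statement verbatim.
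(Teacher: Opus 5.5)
Your strategy is the right one. The paper gives no proof of this proposition; it treats it as the classical facts of the preceding subsection transported to $\R^{mn^2}$ (cf.\ \cite{JekelPi2024}). Your Step~1 is correct, and the computations in Steps~2--3 are correct too.

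The problem is the sentence in Step~2 claiming that, with $h_g$ and $\mathcal{I}_g$ read relative to $\gamma_m^{(n)}$, the printed normalizations ``including the $-m\log n$ and the $n^{-2}$'' are exactly right. They are not. By your own computation, $h(\mathrm{law}(X)\,\|\,\gamma_m^{(n)}) = h_g(\widetilde X)$, and the Fisher information relative to $\gamma_m^{(n)}$ is $n^2\mathcal{I}_g(\widetilde X)$. So under your reading, Notation~\ref{not: normalized entropy and Fisher} gives
\[
h_g^{(n)}(X) = n^{-2}h_g(\widetilde X) - m\log n, \qquad \mathcal{I}_g^{(n)}(X) = \mathcal{I}_g(\widetilde X).
\]
With these definitions, three things go wrong:
\begin{enumerate}[(1)]
\item the Gaussian de Bruijn identity descends to $\frac{d}{dt}h_g^{(n)} = -\frac{1}{2n^2}\mathcal{I}_g^{(n)}$, not the stated identity;
\item the third identity comes out as $\mathcal{I}_g^{(n)}(X) = n^2(\mathcal{I}^{(n)}(X) - 2m + \mathbb{E}\norm{X}_2^2)$;
\item Talagrand fails outright: at $X = Z$ it reads $0 \le -m\log n$.
\end{enumerate}
Reading $h_g,\mathcal{I}_g$ relative to the standard Gaussian of $((\mathbb{M}_n)_{\sa}^m,\ip{\cdot,\cdot}_{\tr_n})$ does not rescue the printed formulas either. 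In fact, when you ``divide by $n^2$'' in Step~2 and conclude Talagrand in Step~3, you silently use a different dictionary, namely $\mathcal{I}_g^{(n)}(X) = n^{-2}\mathcal{I}_g(\widetilde X)$ and $h_g^{(n)}(X) = n^{-2}h_g(\widetilde X)$. That contradicts the reading you announced.

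The fix is to state that dictionary as the definition. Flag the printed Notation as a misprint, just as you did for $\mathcal{I}(X)$ in the third identity. Every normalized quantity of $X$ is $n^{-2}$ times the classical quantity of $\widetilde X = nX$, exactly as in your Step~1:
\[
h_g^{(n)}(X) = n^{-2}h(\mathrm{law}(X)\,\|\,\gamma_m^{(n)}) = \tfrac12\mathbb{E}\norm{X}_2^2 - h^{(n)}(X) + \tfrac{m}{2}\log(2\pi), \qquad \mathcal{I}_g^{(n)}(X) = n^{-2}\mathcal{I}_g(\widetilde X).
\]
These match $\chi_g^*$ and $\Phi_g^*$. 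This is also the normalization the paper needs when it converts $\mathcal{I}_g^{(n)}$ back to $\mathcal{I}^{(n)}$ in the proof of Theorem~\ref{thm: main Fisher}. With it, all six claims follow exactly as you wrote them.

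Similarly, $Z = n^{-1}G$ is the correct GUE normalization. It is forced by $\mathbb{E}\norm{Z}_2^2 = m$ and by the log-Sobolev constant $n^{-2}$ used in Lemma~\ref{lem: epsilon net}. It cannot, however, be ``pinned down from the explicit density'' in Notation~\ref{not: GUE tuple}, which as printed is also misnormalized.
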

	
	\subsection{Free entropy}
	
	This section will review the free entropies $\chi$ and $\chi^*$.  The free entropy $\chi$ is defined in terms of volumes of spaces of matrix approximations \cite{VoiculescuFE2}.  However, we will not need to use the definition directly, and we can instead use the following characterization as the largest possible limit of entropies of matrix models.
	
	\begin{proposition}[{\cite[Proposition B.7]{ST2022fractional}, \cite[Theorem 4.10]{JekelPi2024}}] \label{prop: variational formula}
		Let $X^{(n)}$ be an $m$-tuple of self-adjoint random matrices, and let $X$ be an $m$-tuple of self-adjoint elements in a tracial von Neumann algebra $(M,\tau)$.  Assume that
		\begin{enumerate}[(1)]
			\item $X^{(n)}$ converges in noncommutative law in probability to $X$.
			\item $\limsup_{n \to \infty} \norm{X^{(n)}}_\infty \leq R$ for some constant $R$.
			\item There are some positive constants $c$, $C$, $K$, such that
			\begin{equation} \label{eq: 2norm tail bound hypothesis}
				P(\norm{X^{(n)}}_2 \geq C + \delta) \leq K e^{-cn^2 \delta^2}
			\end{equation}
		\end{enumerate}
		Then we have
		\begin{equation} \label{eq: limit of classical entropy bound}
			\limsup_{n \to \infty} h^{(n)}(X^{(n)}) \leq \chi(X).
		\end{equation}
		Conversely, if $\norm{X}_\infty < R$ and $\mathrm{W}^*(X)$ is Connes-embeddable, then there exist random matrix models with $\norm{X^{(n)}} \leq R$  $X^{(n)}$ converging to $X$ in noncommutative law such that
		\begin{equation} \label{eq: limit of classical entropy}
			\limsup_{n \to \infty} h^{(n)}(X) = \chi(X).
		\end{equation}
	\end{proposition}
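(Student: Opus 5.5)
This is a cited result, so the plan only reconstructs the standard argument. It has two halves: the upper bound \eqref{eq: limit of classical entropy bound} for arbitrary matrix models, and the converse construction achieving $\chi(X)$.

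\emph{Upper bound.} Fix a neighborhood $\Gamma$ of the law of $X$, determined by finitely many polynomials $f_1,\dots,f_N$, a tolerance $\varepsilon$ and a cutoff $R' > R$. Let $\Gamma_R^{(n)}(X;N,\varepsilon)$ be the corresponding microstate space, with volume normalized as in Voiculescu's definition. Split the law $\mu_n$ of $X^{(n)}$ as $\mu_n = p_n \mu_n|_{\Gamma} + (1-p_n)\mu_n|_{\Gamma^c}$, where $p_n = P(X^{(n)} \in \Gamma_R^{(n)})$. Hypotheses (1) and (2) give $p_n \to 1$. Concavity and the chain rule for entropy give
\begin{equation}
h(\mu_n) \leq p_n h(\mu_n|_\Gamma) + (1-p_n) h(\mu_n|_{\Gamma^c}) + \log 2.
\end{equation}
For the first term, the maximum entropy of a measure supported on a set is the log of its volume, so $h(\mu_n|_\Gamma) \leq \log \operatorname{vol}(\Gamma^{(n)}_R)$. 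After the normalization $\frac{1}{n^2}(\cdot) + m\log n$, this contributes at most the microstate quantity defining $\chi_R(X;N,\varepsilon)$ in the limsup.

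The main obstacle is the complementary term. Its normalized entropy must not blow up, and this is exactly where the sub-Gaussian tail \eqref{eq: 2norm tail bound hypothesis} enters. Compare $\mu_n|_{\Gamma^c}$ with a Gaussian of variance of order $C^2$: the maximum-entropy bound under a second-moment constraint gives $\frac{1}{n^2}h(\mu_n|_{\Gamma^c}) + m\log n \leq \frac{m}{2}\log(2\pi e\, \mathbb{E}[\norm{X^{(n)}}_2^2 \mid \Gamma^c]/m)$. The tail bound controls this conditional second moment. Even when $p_n$ is only slightly less than $1$, conditioning on an event of probability $1-p_n$ raises $\mathbb{E}\norm{X^{(n)}}_2^2$ by at most $O(\log(1/(1-p_n))/n^2)$, using Gaussian tails at scale $n^{-1}$. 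Hence $(1-p_n)\cdot(\text{that bound}) \to 0$. Taking $\limsup_n$, then the infimum over $N,\varepsilon$, and finally $R'\to\infty$ (or matching $R$ as in Voiculescu's independence of cutoff), yields $\limsup h^{(n)}(X^{(n)}) \leq \chi(X)$.

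\emph{Converse.} When $\chi(X) = -\infty$ any Connes-embeddable model works, for instance deterministic microstates smeared slightly. Otherwise, choose $N_k\uparrow\infty$ and $\varepsilon_k\downarrow 0$, and let $X^{(n)}$ be uniformly distributed on $\Gamma_R^{(n)}(X;N_k,\varepsilon_k)$, with $k = k(n)$ increasing slowly enough that the normalized log-volumes at scale $n$ stay within $1/k$ of $\chi_R(X;N_k,\varepsilon_k)$ along a subsequence realizing the limsup. On the remaining $n$, use any model, which does not lower the limsup. A uniform measure has entropy equal to the log-volume, so \eqref{eq: limit of classical entropy} follows. Convergence in law holds by construction, and the norm bound holds since $\norm{X}_\infty < R$ lets us use cutoff $R$ without changing $\chi$. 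Connes-embeddability guarantees the microstate spaces are nonempty.
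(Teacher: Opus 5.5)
The paper gives no proof of its own here; it defers to \cite[Proposition B.7]{ST2022fractional} and \cite[Theorem 4.10]{JekelPi2024}. Your reconstruction is, as far as I can tell, the standard argument behind those results, and it is correct. It conditions on the microstate event, bounds the good part by $\log\operatorname{vol}(\Gamma_R^{(n)})$ and the bad part by the Gaussian maximum-entropy bound, and for the converse uses uniform measures on microstate spaces with a diagonal choice of $(N_k,\varepsilon_k)$.

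Only cosmetic fixes are needed:
\begin{enumerate}[(1)]
\item The bound $h(\mu_n) \leq p_n h(\mu_n|_\Gamma) + (1-p_n) h(\mu_n|_{\Gamma^c}) + \log 2$ comes from the chain rule, not concavity; concavity gives the reverse inequality.
\item The filler models for the remaining $n$ must themselves converge in law with $\norm{X^{(n)}}_\infty \leq R$. Deterministic microstates work, and Connes-embeddability provides them for all large $n$. With that choice the first part also supplies the matching upper bound.
\item Your estimate of the conditional second moment is loose. All you actually need is $\limsup_{n \to \infty} (1-p_n)\log \mathbb{E}[\norm{X^{(n)}}_2^2 \mid \Gamma^c] \leq 0$. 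This already follows from $\sup_n \mathbb{E}\norm{X^{(n)}}_2^2 < \infty$, which is a consequence of (3), because $q\log(1/q) \to 0$ as $q \to 0$.
\end{enumerate}
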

	
	\begin{remark}
		Note that \cite[Theorem 4.10]{JekelPi2024} is stated for limits along ultrafilters, but it also holds for $\limsup$ since the $\limsup$ is the supremum over all ultrafilters.  Although the constant $c$ in item (3) is not included in \cite[Theorem 4.15]{JekelPi2024}, the same proof applies with only cosmetic changes.
	\end{remark}
	
	Next, we recall the free Fisher information $\Phi^*$ and entropy $\chi^*$ from \cite{VoiculescuFE5}.
	
	\begin{definition}[See {\cite[\S 3]{VoiculescuFE5}}]
		Fix $m \in \N$.  The \emph{free difference quotient} $\partial_j: \C\ip{x_1,\dots,x_m} \to \C\ip{x_1,\dots,x_m} \otimes \C\ip{x_1,\dots,x_m}$ is the unique linear map satisfying
		\begin{align}
			\partial_j(x_k) &= \delta_{j,k} 1 \otimes 1 \\
			\partial_j(fg) &= f \cdot \partial_j g + \partial_j f \cdot g,
		\end{align}
		where $f \cdot \partial_j g = (f \otimes 1) \partial_j g$ represents left multiplication on the left tensorand and $\partial_j f \cdot g$ represents right multiplication on the right tensorand.
	\end{definition}
	
	The free analog of a score function is defined via an integration-by-parts relation analogous to the ones satisfied by the classical score function $(-\nabla \rho / \rho)(X)$.
	
	\begin{definition}[See {\cite[\S 3]{VoiculescuFE5}}]
		Let $X$ be a self-adjoint $m$-tuple from $(M,\tau)$.  We say that $\xi_j \in L^2(\mathrm{W}^*(X),\tau)$ is a \emph{$j$th conjugate variable} or \emph{free score function} if we have
		\begin{equation} \label{eq: conjugate variable def}
			\ip{\xi_j, f(X)}_\tau = \tau \otimes \tau[\partial_j f(X)] \text{ for } f \in \C\ip{x_1,\dots,x_m}.
		\end{equation}
		If conjugate variables $\xi_j$ exist for all $j = 1$, \dots, $m$, we define the \emph{free Fisher information}
		\begin{equation}
			\Phi^*(X) := \norm{\xi}_2^2.
		\end{equation}
		If a family of conjugate variables does not exist, then set $\Phi^*(X) = +\infty$.
	\end{definition}
	
	The non-microstates free entropy $\chi^*$ is defined via the analog of the de Bruijn identity.
	
	\begin{definition}[See {\cite[\S 7]{VoiculescuFE5}}]
		Let $X$ be a self-adjoint $m$-tuple from a tracial von Neumann algebra $(M,\tau)$.  Let $Z$ be a freely independent semicircular family (we can assume $M$ contains such a $Z$ by taking a free product if needed).  Then let
		\begin{equation}
			\chi^*(X) = \frac{1}{2} \int_0^\infty \frac{m}{1+t} - \Phi^*(X + t^{1/2}S)\,dt + \frac{m}{2} \log(2\pi e).
		\end{equation}
	\end{definition}
	
	This definition makes sense as \cite{VoiculescuFE5} shows that $\Phi^*(X + t^{1/2}S)$ is finite for $t > 0$ and the positive part of the integral above is finite.  It also follows from a changes of variables argument and basic manipulations that
	\begin{equation}
		\chi^*(X + t^{1/2} S) = \frac{1}{2} \int_t^\infty \left[\frac{m}{1 + s} - \Phi^*(X + s^{1/2} S) \right]\,ds + \frac{m}{2} \log(1+t) + \frac{m}{2} \log(2\pi e).
	\end{equation}
	From this we deduce by Lebesgue differentiation that
	\begin{equation}
		\frac{d}{dt} \chi^*(X + t^{1/2}S) = \frac{1}{2} \Phi^*(X + t^{1/2} S) \text{ for a.e. } t > 0.
	\end{equation}
	
	The Gaussian versions of $\chi^*$ and $\Phi^*$ are defined as follows, analogous to the classical case.
	\begin{definition} \label{def: Gaussian free Fisher}
		For a self-adjoint $m$-tuple $X$ from a tracial von Neumann algebra, define
		\begin{align}
			\chi_g^*(X) &= \frac{1}{2} \norm{X}_2^2 - \chi^*(X) + \frac{m}{2} \log(2\pi). \\
			\Phi_g^*(X) &= \Phi^*(X) - 2m + \norm{X}_2^2 = \norm{\xi - X}_2^2,
		\end{align}
		where the last equality holds under the assumption that the conjugate variables $\xi$ for $X$ exist (otherwise $\Phi_g^*(X)$ and $\Phi(X)$ are both $+\infty$).
	\end{definition}
	
	The analogous computations as Observation \ref{obs: entropy conversion} show the following.
	
	\begin{lemma} \label{lem: entropy conversion 2}
	For a self-adjoint $m$-tuple from a tracial von Neumann algebra and a freely independent standard semicircular family, we have
	\begin{equation}
		\chi_g^*(X) = \frac{1}{2} \int_0^\infty \Phi_g^*(e^{-t/2} X + (1 - e^{-t})^{1/2} Z).
	\end{equation}
	\end{lemma}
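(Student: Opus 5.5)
We deduce Lemma \ref{lem: entropy conversion 2} from Voiculescu's definition of $\chi^*$ by the same change of variables as in Observation \ref{obs: entropy conversion}, now with free convolution in place of classical convolution. Set $X_t = e^{-t/2}X + (1-e^{-t})^{1/2}Z$ and $u = e^t - 1$, so that $X_t = e^{-t/2}(X + u^{1/2}Z)$.

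\textbf{Scaling of conjugate variables.} If $\xi$ is the conjugate variable tuple for $Y$ and $\lambda > 0$, then $\lambda^{-1}\xi$ is the conjugate variable tuple for $\lambda Y$. This follows from \eqref{eq: conjugate variable def}: for a polynomial $f$, put $g(x) = f(\lambda x)$; then $\partial_j g(x) = \lambda(\partial_j f)(\lambda x)$, and the conjugate-variable identity for $Y$ tested against $g$ gives the claim. Hence $\Phi^*(\lambda Y) = \lambda^{-2}\Phi^*(Y)$, and with $\lambda = e^{t/2}$,
\begin{equation}
\Phi^*(X + u^{1/2}Z) = e^{-t}\Phi^*(X_t) = e^{-t}\bigl(\Phi_g^*(X_t) + 2m - \norm{X_t}_2^2\bigr).
\end{equation}
Since $X$ and $Z$ are free, $\tau(X_j Z_j) = \tau(X_j)\tau(Z_j) = 0$, so $\norm{X_t}_2^2 = e^{-t}\norm{X}_2^2 + (1-e^{-t})m$. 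Therefore
\begin{equation}
\Phi^*(X + u^{1/2}Z) = e^{-t}\bigl(\Phi_g^*(X_t) + m(1+e^{-t}) - e^{-t}\norm{X}_2^2\bigr),
\end{equation}
exactly as in the classical computation.

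\textbf{Substitution.} Change variables $u = e^t - 1$, $du = e^t\,dt$, in the definition of $\chi^*$:
\begin{equation}
\frac{1}{2}\int_0^\infty \Bigl[\frac{m}{1+u} - \Phi^*(X+u^{1/2}Z)\Bigr]du = \frac{1}{2}\norm{X}_2^2 - \frac{m}{2} - \frac{1}{2}\int_0^\infty \Phi_g^*(X_t)\,dt.
\end{equation}
Adding $\frac{m}{2}\log(2\pi e) = \frac{m}{2}\log(2\pi) + \frac{m}{2}$ and comparing with Definition \ref{def: Gaussian free Fisher} gives $\chi_g^*(X) = \frac{1}{2}\int_0^\infty \Phi_g^*(X_t)\,dt$.

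\textbf{Main obstacle: justifying the splitting of integrals.} The integrands $e^{-t}m$ and $e^{-2t}\norm{X}_2^2$ are integrable, so splitting the integral is valid whenever $\int_0^\infty \Phi_g^*(X_t)\,dt$ is finite. The nonnegativity $\Phi_g^*(X_t) = \norm{\xi - X_t}_2^2 \geq 0$ shows that both sides equal $+\infty$ simultaneously. This is consistent with Voiculescu's convention that $\chi^* = -\infty$ when the negative part of the integral diverges, which he shows is the only possible way for it to fail to be finite. Finally, by \cite{VoiculescuFE5}, $\Phi^*(X+u^{1/2}Z) < \infty$ for every $u > 0$, so the conjugate variables of $X_t$ exist for all $t > 0$ and the identity $\Phi_g^* = \norm{\xi - X_t}_2^2$ holds there.
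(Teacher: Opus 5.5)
Your proposal is correct and follows the paper's route. The paper proves this lemma only by pointing to the change of variables $u = e^t - 1$ in Observation \ref{obs: entropy conversion}, and your scaling identity $\Phi^*(\lambda Y) = \lambda^{-2}\Phi^*(Y)$, the vanishing of the cross term $\tau(X_j Z_j)$ by freeness, and the remark that the integrand is an integrable function minus the nonnegative $\Phi_g^*(X_t)$ supply exactly the details left implicit there (one harmless slip: after multiplying by the Jacobian $e^t$, the integrable terms are $m e^{-t}$ and $e^{-t}\norm{X}_2^2$, not $e^{-2t}\norm{X}_2^2$).
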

	
	We end this section with the bound relating $\chi$ and $\chi^*$.  Recall that $\chi \leq \chi^*$ was proved in general in \cite{BCG2003}.  Another proof was given in \cite{JekelPi2024}, from which we recall the following lemma relating free Fisher information and classical Fisher information of random matrix models.
	
	\begin{proposition}[{See \cite[Proposition 6.1]{JekelPi2024}}] \label{prop: classical versus free Fisher information}
		Let $X^{(n)}$ be an $m$-tuple of $n \times n$ self-adjoint random matrices and $X$ a self-adjoint $m$-tuple from a tracial von Neumann algebra $(M,\tau)$.   Assume that $X^{(n)}$ converges to $X$ in noncommutative law in probability, and furthermore assume that
		\begin{equation}  \label{eq: bounded moment assumption}
			\sup_n \mathbb{E} \norm{X}_p^p < \infty \text{ for } p \in [1,\infty).
		\end{equation}
		Then
		\begin{equation} \label{eq: classical versus free Fisher info}
			\liminf_{n \to \infty} \mathcal{I}^{(n)}(X^{(n)}) \geq \Phi^*(X).
		\end{equation}
	\end{proposition}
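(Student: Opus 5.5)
The plan is a duality argument: realize the normalized classical score as an approximate witness for Voiculescu's integration-by-parts relation, and pass to the limit using Cauchy--Schwarz and Riesz representation.

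\textbf{Reduction to a subsequence.} We may assume $L := \liminf_{n} \mathcal{I}^{(n)}(X^{(n)}) < \infty$, since otherwise there is nothing to prove. Pass to a subsequence along which $\mathcal{I}^{(n)}(X^{(n)}) \to L$. Along it each $X^{(n)}$ has finite Fisher information, so it has a density $\rho_n$ with $\nabla \rho_n/\rho_n \in L^2(\mu_n)$ (in the weak sense), and a score function $\Xi^{(n)} = -(\nabla\rho_n/\rho_n)(X^{(n)})$.

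\textbf{Step 1: matrix integration by parts.} Set $\eta^{(n)} = n^{-2}\Xi^{(n)}$, so that $\mathbb{E}\norm{\eta^{(n)}}_2^2 = \mathcal{I}^{(n)}(X^{(n)})$ with the $\tr_n$-normalization of Notation \ref{not: matrix algebras}. For $f = (f_1,\dots,f_m) \in \C\ip{x_1,\dots,x_m}^m$, consider the vector field $x \mapsto f(x)$ on $(\mathbb{M}_n)_{\sa}^m$. In the orthonormal basis $\{\sqrt n E_{jk}\}$, its divergence with respect to $\ip{\cdot,\cdot}_{\tr_n}$ is
\begin{equation}
n^2 \sum_j \tr_n \otimes \tr_n(\partial_j f_j(x)).
\end{equation}
This is the standard computation: the derivative of a monomial in direction $E_{ab}$, contracted against $E_{ba}$ and summed, produces $\Tr \otimes \Tr$ of the free difference quotient. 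Classical integration by parts (justified via the moment hypothesis \eqref{eq: bounded moment assumption} and a truncation of $f$) then gives
\begin{equation}
\mathbb{E}\ip{\eta^{(n)}, f(X^{(n)})}_{\tr_n} = \mathbb{E} \sum_j \tr_n\otimes\tr_n[\partial_j f_j(X^{(n)})].
\end{equation}

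\textbf{Step 2: passage to the limit.} By convergence in noncommutative law in probability, the right-hand side converges to $\sum_j \tau\otimes\tau[\partial_j f_j(X)]$. The reason is that $\tr_n\otimes\tr_n$ of a simple tensor is a product of two traces of polynomials, and the uniform $L^p$ moment bounds give the uniform integrability needed to upgrade convergence in probability to convergence of expectations. Similarly $\mathbb{E}\norm{f(X^{(n)})}_2^2 \to \norm{f(X)}_2^2$. Cauchy--Schwarz on the left-hand side then gives
\begin{equation}
\Bigl|\sum_j \tau\otimes\tau[\partial_j f_j(X)]\Bigr| \leq L^{1/2}\, \norm{f(X)}_2 .
\end{equation}

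\textbf{Step 3: conclusion.} Consider the linear functional $f(X) \mapsto \sum_j \tau\otimes\tau[\partial_j f_j(X)]$ on the dense subspace $\{f(X)\}$ of $L^2(\mathrm{W}^*(X),\tau)^m$. The bound in Step 2 shows it is well defined: if $f(X) = 0$, the bound forces its value to be $0$. It is also bounded with norm at most $L^{1/2}$. By Riesz representation there is $\xi \in L^2(\mathrm{W}^*(X))^m$ satisfying \eqref{eq: conjugate variable def} for every $j$ (take $f$ supported in a single coordinate), with $\norm{\xi}_2 \leq L^{1/2}$. Hence $\Phi^*(X) = \norm{\xi}_2^2 \leq L$, which is \eqref{eq: classical versus free Fisher info}.

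\textbf{Main obstacle.} I expect the main care to be needed in Step 1. One must get the normalization exactly right so that the $n^2$ factors cancel against the definition $\mathcal{I}^{(n)} = n^{-4}\mathcal{I}$. One must also justify integration by parts for polynomial (unbounded) test functions when the density is only weakly differentiable. I would handle the latter by approximating $f$ with $f\cdot\chi_R(\norm{x}_2)$ for a smooth cutoff $\chi_R$ and letting $R\to\infty$, using the moment bounds to control the boundary terms.
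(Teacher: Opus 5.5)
Your argument is correct. The paper does not prove this statement itself; it cites \cite[Proposition 6.1]{JekelPi2024}. Your route is the standard duality proof of this inequality: matrix integration by parts with divergence $n^2\sum_j \tr_n\otimes\tr_n[\partial_j f_j]$, which makes the $n^{-2}$-rescaled score satisfy the free integration-by-parts relation exactly at each $n$; then Cauchy--Schwarz; then passage to the limit using uniform integrability from the moment bounds; then Riesz representation on the dense subspace of polynomials in $X$. You also correctly read the moment hypothesis as a bound on $\sup_n \mathbb{E}\norm{X^{(n)}}_p^p$, which is what the mistyped display intends.
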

	
	\begin{observation} \label{obs: subtract converging second moment}
		Note \eqref{eq: classical versus free Fisher info} also holds for $\mathcal{I}_g^{(n)}(X^{(n)})$ and $\Phi_g^*(X)$.  Indeed, the Gaussian versions differ from the Lebesgue versions by subtracting $2m$ and adding $\mathbb{E} \norm{X^{(n)}}_2^2$ or $\norm{X}_2^2$ respectively.  But using a truncation or generalized dominated convergence argument, as in the proof of \cite[Proposition 6.1]{JekelPi2024} for instance,
		\begin{equation} \label{eq: strict Fisher comparison hypothesis}
			\lim_{n \to \infty} \mathbb{E} \norm{X^{(n)}}_2^2 = \norm{X}_2^2.
		\end{equation}
	\end{observation}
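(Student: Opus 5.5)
The plan is to reduce the Gaussian statement to Proposition \ref{prop: classical versus free Fisher information} by proving the convergence of second moments \eqref{eq: strict Fisher comparison hypothesis}. The Gaussian Fisher informations differ from the Lebesgue ones by the affine relations $\mathcal{I}_g^{(n)}(X^{(n)}) = \mathcal{I}^{(n)}(X^{(n)}) - 2m + \mathbb{E}\norm{X^{(n)}}_2^2$ and $\Phi_g^*(X) = \Phi^*(X) - 2m + \norm{X}_2^2$. Suppose \eqref{eq: strict Fisher comparison hypothesis} holds. Taking $\liminf$ in the first identity, the convergent term passes through, so
\[
\liminf_{n\to\infty} \mathcal{I}_g^{(n)}(X^{(n)}) = \liminf_{n\to\infty}\mathcal{I}^{(n)}(X^{(n)}) - 2m + \norm{X}_2^2 \geq \Phi^*(X) - 2m + \norm{X}_2^2 = \Phi_g^*(X).
\]
If $\Phi^*(X) = +\infty$, both sides are $+\infty$ and the inequality is trivial. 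So everything hinges on \eqref{eq: strict Fisher comparison hypothesis}.

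To prove \eqref{eq: strict Fisher comparison hypothesis}, set $W_n = \norm{X^{(n)}}_2^2 = \sum_{j=1}^m \tr_n((X_j^{(n)})^2)$. This is a real random variable of the form $\tr_n(f(X^{(n)}))$ for the polynomial $f = \sum_j x_j^2$. By the convergence in noncommutative law in probability, $W_n \to \sum_j \tau(X_j^2) = \norm{X}_2^2$ in probability. Convergence in probability plus uniform integrability gives convergence of expectations (Vitali's theorem). For uniform integrability, it suffices to bound $\sup_n \mathbb{E} W_n^2$.

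The key estimate is
\[
W_n^2 \leq m \sum_j \tr_n((X_j^{(n)})^2)^2 \leq m \sum_j \tr_n((X_j^{(n)})^4) = m\norm{X^{(n)}}_4^4,
\]
using Cauchy--Schwarz in the sum over $j$ and Jensen/Cauchy--Schwarz for the normalized trace. By \eqref{eq: bounded moment assumption} with $p = 4$ (reading the hypothesis as concerning $X^{(n)}$), $\sup_n \mathbb{E}W_n^2 < \infty$. Hence $\{W_n\}$ is uniformly integrable, and $\mathbb{E} W_n \to \norm{X}_2^2$.

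Alternatively, the same conclusion follows by truncation. Write $\mathbb{E} W_n = \mathbb{E}\min(W_n,R) + \mathbb{E}(W_n - R)_+$. The first term converges to $\min(\norm{X}_2^2,R)$ by bounded convergence in probability, and the second is at most $\mathbb{E}W_n^2/R$ uniformly in $n$; then let $R \to \infty$.

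The only potential obstacle is this uniform integrability step. It is handled entirely by the moment hypothesis, which is exactly why \eqref{eq: bounded moment assumption} is imposed, so I expect no genuine difficulty. In the applications, e.g.\ $Y^{(n)}(t)$ with $\norm{Y^{(n)}(0)}_\infty \leq 3$ plus a GUE tuple, this hypothesis is checked via Theorem \ref{thm: GUE operator norm bound}.
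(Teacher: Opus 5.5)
Your proposal is correct and follows the same route as the paper. The Gaussian and Lebesgue Fisher informations differ by $-2m$ plus the second moment, so everything reduces to showing $\mathbb{E}\norm{X^{(n)}}_2^2 \to \norm{X}_2^2$, which the paper only sketches by pointing to a truncation or dominated-convergence argument. Your explicit uniform-integrability bound $\mathbb{E}\norm{X^{(n)}}_2^4 \leq m\,\mathbb{E}\norm{X^{(n)}}_4^4$ is a valid way to fill in that step; it uses the moment hypothesis with $p=4$, which you correctly read as a condition on $X^{(n)}$ rather than on $X$.
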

	
	This also leads to the following statement, which is a related result to $\chi \leq \chi^*$ and the proof given in \cite[Theorem 6.4]{JekelPi2024}, but looking at the integral of Fisher information over a finite interval rather than $[0,\infty)$.  This roughly corresponds to \cite[\S 5]{OpenAI2026entropy}.
	
	\begin{proposition} \label{prop: classical versus free entropy strict}
		Let $X$ be an $m$-tuple from a tracial von Neumann algebra with $\norm{X}_\infty < R$ and $\chi(X) > -\infty$, and let $S$ be a freely independent standard semicircular $m$-tuple.  Let $X^{(n)}$ be random matrix models for $X$ satisfying \eqref{eq: limit of classical entropy} given by the second part of Proposition \ref{prop: variational formula}.  Let $S^{(n)}$ be an independent GUE $m$-tuple.  Let $t_1 > 0$, and suppose that
		\begin{equation} \label{eq: strict Fisher bound}
			\liminf_{n \to \infty} \mathcal{I}^{(n)}(X^{(n)} + t^{1/2} S^{(n)}) > \Phi^*(X + t^{1/2}S)
		\end{equation}
		for $t$ in a set of positive measure in $[0,\infty)$.  Then
		\begin{equation}
			\chi(X) < \chi^*(X).
		\end{equation}
	\end{proposition}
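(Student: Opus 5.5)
The plan is to compare the two de Bruijn identities, the classical one for the matrix models and the free one, on a finite time interval $[0,T]$, and to absorb the remaining time $[T,\infty)$ using the general inequality $\chi \le \chi^*$ of \cite{BCG2003}.

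\textbf{Choice of $T$.} Fix $T>0$ so that the set $E$ of $t$ where \eqref{eq: strict Fisher bound} holds satisfies $|E\cap[0,T]|>0$. This is possible since $|E|>0$. Write $X_t = X + t^{1/2}S$ and $X^{(n)}_t = X^{(n)} + t^{1/2}S^{(n)}$.

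\textbf{Classical side.} Since $\chi(X)>-\infty$ and the models attain $\limsup_n h^{(n)}(X^{(n)}) = \chi(X)$, we may pass to a subsequence along which $h^{(n)}(X^{(n)})\to\chi(X)$ and each term is finite. The normalized de Bruijn identity, integrated from $0$ to $T$ (taking $t\downarrow 0$ limits), gives
\[
h^{(n)}(X^{(n)}_T) = h^{(n)}(X^{(n)}) + \frac12\int_0^T \mathcal{I}^{(n)}(X^{(n)}_t)\,dt .
\]
Next I apply the first half of Proposition \ref{prop: variational formula} to the models $X^{(n)}_T$ for $X_T$. Its three hypotheses hold as follows.
\begin{itemize}
\item Convergence in law in probability comes from Theorem \ref{thm: asymptotic freeness}, using $\norm{X^{(n)}}_\infty\le R$.
\item The operator norm bound comes from Theorem \ref{thm: GUE operator norm bound}.
\item The $\norm{\cdot}_2$ tail bound \eqref{eq: 2norm tail bound hypothesis} comes from Gaussian concentration for the $1$-Lipschitz function $\norm{\cdot}_2$ of $S^{(n)}$, whose variance scale is $1/n^2$ in our normalization, together with the deterministic bound on $X^{(n)}$.
\end{itemize}
This yields $\limsup_n h^{(n)}(X^{(n)}_T)\le \chi(X_T)$. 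Using $\limsup(a_n-b_n)\le\limsup a_n-\liminf b_n$ and then Fatou's lemma (the integrands are nonnegative and measurable in $t$), we get
\[
\chi(X) \le \chi(X_T) - \frac12\int_0^T \liminf_{n\to\infty}\mathcal{I}^{(n)}(X^{(n)}_t)\,dt .
\]

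\textbf{Free side.} By \cite{BCG2003}, $\chi(X_T)\le\chi^*(X_T)$. By the formula for $\chi^*(X+t^{1/2}S)$ recorded after the definition of $\chi^*$, we have $\chi^*(X_T)=\chi^*(X)+\frac12\int_0^T\Phi^*(X_t)\,dt$; the relevant integrals are finite since $\Phi^*(X_t)\le m/t$ and $\chi(X)>-\infty$. Combining,
\[
\chi(X)\le \chi^*(X) - \frac12\int_0^T\Bigl[\liminf_{n}\mathcal{I}^{(n)}(X^{(n)}_t)-\Phi^*(X_t)\Bigr]dt .
\]

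\textbf{Conclusion.} The bracket is nonnegative for every $t>0$ by Proposition \ref{prop: classical versus free Fisher information}. Its moment hypothesis \eqref{eq: bounded moment assumption} holds for $X^{(n)}_t$ because $X^{(n)}$ is uniformly bounded and GUE moments are uniformly bounded. The bracket is strictly positive on $E\cap[0,T]$, which has positive measure. Hence the integral is strictly positive, and $\chi(X)<\chi^*(X)$.

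\textbf{Main obstacle.} I expect the delicate point to be bookkeeping rather than any deep step. We must check the hypotheses of Proposition \ref{prop: variational formula} for $X^{(n)}_T$, in particular the $n^2$-scale concentration of $\norm{\cdot}_2$. We must also make sure that no $\infty-\infty$ issue arises. This is guaranteed because $\chi(X)$ is finite, so all terms are finite along the chosen subsequence, and the $t\downarrow0$ limit in the classical de Bruijn identity is handled by monotone convergence together with upper semicontinuity of $h^{(n)}$ under Gaussian convolution at $t=0$.
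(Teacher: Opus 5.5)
Your proposal is correct and is essentially the paper's proof. You integrate the classical de Bruijn identity over a finite window $[0,T]$, bound $\limsup_n h^{(n)}(X^{(n)}_T)$ by $\chi(X_T)\le\chi^*(X_T)$ via the first half of Proposition~\ref{prop: variational formula} and \cite{BCG2003}, and get the strict gap from Proposition~\ref{prop: classical versus free Fisher information} plus Fatou's lemma; your subsequence and $t\downarrow 0$ bookkeeping is slightly more careful than the paper's, though the bound $\Phi^*(X_t)\le m/t$ is not integrable at $0$, so finiteness of $\int_0^T\Phi^*(X_t)\,dt$ really comes from $\chi^*(X)\ge\chi(X)>-\infty$.
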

	
	\begin{proof}
		Fix $t_1 > 0$ such that \eqref{eq: strict Fisher bound} holds for a set of positive measure in $[0,t_1]$.
		First, by Voiculescu's asymptotic freeness theorem (Theorem \ref{thm: asymptotic freeness}), $(X^{(n)},S^{(n)})$ converges to $(X,S)$ in noncommutative law in probability.  Hence, so does $X^{(n)} + t^{1/2} S^{(n)}$ to $X + t^{1/2} S$.  Moreover, note that the GUE tuple $S^{(n)}$ satisfies \eqref{eq: 2norm tail bound hypothesis} due to concentration of measure (see \cite[\S 2.3]{AGZ2009}) and $X^{(n)}$ trivially satisfies \eqref{eq: 2norm tail bound hypothesis} since it was chosen to be bounded in operator norm (per Proposition \ref{prop: variational formula}).  Hence, the first part of Proposition \ref{prop: variational formula} can be applied to $X^{(n)} + t^{1/2} S^{(n)}$, so by \eqref{eq: limit of classical entropy bound},
		\begin{equation}
			\limsup_{n \to \infty} h^{(n)}(X^{(n)} + t^{1/2} S^{(n)}) \leq \chi(X + t^{1/2} S).
		\end{equation}
		By Proposition \ref{prop: classical versus free Fisher information}, we have
		\begin{equation}
			\liminf_{n \to \infty} \mathcal{I}^{(n)}(X^{(n)} + t^{1/2} S^{(n)}) \geq \Phi^*(X + t^{1/2} S),
		\end{equation}
		and by hypothesis \eqref{eq: strict Fisher bound} strict inequality holds for a set of positive measure in $[t_0,t_1]$.  Hence, using Fatou's lemma,
		\begin{equation}
			\liminf_{n \to \infty} \int_0^{t_1} \mathcal{I}^{(n)}(X^{(n)} + t^{1/2} S^{(n)}) \,dt > \int_0^{t_1} \Phi^*(X + t^{1/2} S)\,dt.
		\end{equation}
		In other words,
		\begin{equation}
			\liminf_{n \to \infty} [h^{(n)}(X^{(n)} + t^{1/2} S^{(n)}) - h^{(n)}(X^{(n)})] > \chi^*(X + t^{1/2}S) - \chi^*(X).
		\end{equation}
		Hence, using $\chi \leq \chi^*$ and the choice of $X^{(n)}$,
		\begin{align}
			\chi(X) &= \limsup_{n \to \infty} h^{(n)}(X^{(n)}) \\
			&\leq \limsup_{n \to \infty} h^{(n)}(X^{(n)} + t^{1/2} S^{(n)}) - \liminf_{n \to \infty} [h^{(n)}(X^{(n)} + t^{1/2} S^{(n)}) - h^{(n)}(X^{(n)})] \\
			&< \chi(X + t^{1/2} S) - \chi^*(X + t^{1/2}S) + \chi^*(X) \\
			&\leq \chi^*(X),
		\end{align}
		which completes the proof.
	\end{proof}
	
	\section{Upper bound on free Fisher information} \label{sec: main upper bound}
	
	\subsection{General upper bound}
	
	\begin{notation}
		For a string $\vec{\imath} = (i_1,\dots,i_k) \in [m]^k$, define the iterated free difference quotient
		\begin{equation}
		\partial_{\vec{\imath}} f = (\partial_{i_1} \otimes \id^{\otimes (k-1)}) \circ \dots \circ \partial_{i_k} f.
		\end{equation}
		More explicitly, for a monomial $f(x) = x_{j_1} \dots x_{j_\ell}$, we have
		\begin{equation}
		\partial_{\vec{\imath}}[x_{j_1} \dots x_{j_\ell}] = \sum_{1 \leq t_1 < \dots < t_k \leq \ell} \mathbf{1}_{j_{t_1} = i_1, \dots, j_{t_k} = i_k} (x_{j_1} \dots x_{j_{t_1-1}}) \otimes (x_{j_{t_1+1}} \dots x_{j_{t_2-1}}) \otimes \dots \otimes (x_{j_{t_k+1}} \dots x_{j_\ell})
		\end{equation}
		We also recall the coassociativity relation
		\begin{equation} \label{eq: coassociativity}
			(\partial_{i_1} \otimes \id) \circ \partial_{i_2} = (\id \otimes \partial_{i_2}) \circ \partial_{i_1},
		\end{equation}
		which implies that
		\begin{equation} \label{eq: kth order derivative reversed}
			\partial_{\vec{\imath}} = (\id^{\otimes (k-1)} \otimes \partial_{i_k}) \circ \dots \circ \partial_{i_1}.
		\end{equation}
	\end{notation}
	
	\begin{definition}[{See \cite[Definition 3.1]{VoiculescuFE3}}]
		Fix a tuple $(X_1,\dots,X_m)$ of noncommutative random variables that generate a tracial von Neumann algebra $(M,\tau)$.  For $\vec{\imath} = (i_1,\dots,i_k) \in [m]^k$, we say that $\xi \in L^2(M,\tau)$ is a \emph{$\vec{\imath}$-conjugate variable} for $(X_1,\dots,X_m)$ if for all $f \in \C\ip{x}$, we have
		\begin{equation} \label{eq: higher order conjugate}
			\ip{\xi,f(X)} = \tau^{\otimes (k+1)}(\partial_{\vec{\imath}} f(X)).
		\end{equation}
	\end{definition}
	
	It will be important to have an explicit description of the higher-order conjugate variables for a free semicircular family.
	
	\begin{example} \label{ex: Chebyshev}
		For $(i_1,\dots,i_k) \in [m]^k$, consider the \emph{multivariate noncommutative Chebyshev polynomials} $U_{i_1,\dots,i_k}$ defined recursively as follows (here $\emptyset$ is the empty string):
		\begin{align}
			U_{\emptyset}(x) &= 1 \\
			U_i(x) &= x_i \\
			U_{i_1,\dots,i_{k+1}}(x) &= x_{i_1} U_{i_2,\dots,i_{k+1}}(x) - \delta_{i_1,i_2} U_{i_3,\dots,i_{k+1}} \text{ for } k \geq 1.
		\end{align}
		If $S = (S_1,\dots,S_m)$ is a standard semicircular family and $M$ is their von Neumann algebra, it is well known that $U_{i_1,\dots,i_k}(S)$ form an orthonormal basis for $L^2(M,\tau)$.  This is nothing other than the canonical orthonormal basis for the free Fock space (see \cite[\S 2]{BozejkoKummererSpeicher1997}, \cite{Ito2026}, \cite[Proposition 3.5]{Parraud2025}), or the Wigner chaos expansion (see \cite{BS1998,KNPS2012})
		
		The fact that $U_{\vec{\imath}}(S)$ is the $\vec{\imath}$-conjugate variable for $S$ has been observed before.  Voiculescu \cite[Proposition 3.8]{VoiculescuFE3} is the case when $i_1 = i_2 = \dots = i_k$, and the general case is implicit in \cite[Definition 15]{Diez2023} and explicitly written in \cite[Proposition 3.3]{Parraud2025}.  The inductive proof goes as follows.  The base case of the empty string $\emptyset$ is trivial and for strings of length $1$, it is well known that $S_i$ is the $i$-conjugate variable for $S$.  For the inductive step, assume the claim holds for strings up to length $k$.  Let $f$ be a polynomial and observe that
		\begin{align}
			\ip{S_{i_1} U_{i_2,\dots,i_{k+1}}(S), f(S)}_\tau &= \ip{U_{i_2,\dots,i_{k+1}}(S), S_{i_1} f(S)}_\tau \\
			&= \tau^{\otimes (k+1)}[\partial_{i_2,\dots,i_{k+1}} [S_{i_1} f(S)]] \\
			&= \tau^{\otimes (k+1)}[(\id \otimes \partial_{i_3,\dots,i_{k+1}}) \circ \partial_{i_2} [S_{i_1} f(S)]] \\
			&= \tau^{\otimes (k+1)}[(\id \otimes \partial_{i_3,\dots,i_{k+1}}) \circ [\delta_{i_1,i_2} 1 \otimes f(S) + S_{i_1} \partial_{i_2} f(S)]] \\
			&= \delta_{i_1,i_2} \tau^{\otimes k}[\partial_{i_3,\dots,i_{k+1}} f(S)] + \tau^{\otimes (k+1)}[S_{i_1} \partial_{i_2,\dots,i_{k+1}} f(S)] \\
			&= \delta_{i_1,i_2} \ip{U_{i_3,\dots,i_{k+1}}(S),f(S)} + \tau^{\otimes (k+2)}[\partial_{i_1,i_2,\dots,i_{k+1}} f(S)].
		\end{align}
		In the last line, we applied the conjugate-variable identity $\tau(S_{i_1} g(S)) = \tau \otimes \tau(\partial_{i_1} g(S))$ on the first tensorand of $\partial_{i_2,\dots,i_{k+1}} f(S)$.  Rearranging the last inequality produces
		\begin{align}
			\ip{U_{i_1,\dots,i_{k+1}}(S),f(S)}_\tau &= \ip{S_{i_1} U_{i_2,\dots,i_{k+1}}(S), f(S)}_\tau - \delta_{i_1,i_2} \ip{U_{i_3,\dots,i_{k+1}}, f(S)} \\
			&= \tau^{\otimes (k+2)}[\partial_{i_1,\dots,i_{k+1}} f(S)],
		\end{align}
		completing the induction.
	\end{example}
	
	The first idea implicit in the proof of OpenAI is the following noncommutative Stein identity.  This follows from the free moment-cumulant expansion and thus is likely folklore.  The one-variable classical analog of this identity is \cite[Corollary 1]{Barbour1986} \cite[Proposition 2.2]{NP2010cumulants}.  The finite-order expansion is similar to the iterated integration by parts used in the study of free Stein kernels \cite{Diez2023}, while the first-order case is closely related to the cumulant characterization of conjugate variables \cite[\S 8.2, Theorem 20]{MS2017}.
	
	\begin{proposition}[Stein identity] \label{prop: Stein identity}
		Let $X = (X_1,\dots,X_m)$ be a tuple of self-adjoint elements in a noncommutative probability space $(M,\tau)$.  For every noncommutative polynomial $f$ and $i \in [m]$, we have
		\begin{equation} \label{eq: Stein identity}
			\tau(X_i f(X)) = \sum_{k=0}^\infty \sum_{\vec{\imath} \in [m]^k} K_{k+1}(X_i, X_{i_1},\dots,X_{i_k}) \tau^{\otimes(k+1)} [\partial_{\vec{\imath}} f(X)].
		\end{equation}
		Here the sum has only finitely many terms since the derivatives of order higher than the degree of $f$ vanish.
	\end{proposition}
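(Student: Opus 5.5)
By linearity in $f$, it suffices to prove \eqref{eq: Stein identity} for a monomial $f(x) = x_{j_1} \cdots x_{j_\ell}$. The plan is to expand the left-hand side $\tau(X_i X_{j_1} \cdots X_{j_\ell})$ using the moment--cumulant formula over $\NC(\ell+1)$, and then group the non-crossing partitions according to the block containing the first element (the position of $X_i$). Relabel the positions as $0,1,\dots,\ell$, with $0$ carrying $X_i$. Every $\pi \in \NC(\{0,\dots,\ell\})$ has a distinguished block $B_0 = \{0, t_1, \dots, t_k\}$ with $1 \le t_1 < \dots < t_k \le \ell$ and $k \ge 0$. The cumulant factor from this block is $K_{k+1}(X_i, X_{j_{t_1}}, \dots, X_{j_{t_k}})$.

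The key combinatorial fact is the standard decomposition of non-crossing partitions. Because $\pi$ is non-crossing, every other block of $\pi$ must lie entirely inside one of the $k+1$ gaps
\[
\{1,\dots,t_1-1\},\ \{t_1+1,\dots,t_2-1\},\ \dots,\ \{t_k+1,\dots,\ell\}.
\]
Indeed, a block meeting two gaps would cross $B_0$. Conversely, any choice of non-crossing partitions of the gaps, together with $B_0$, yields a non-crossing partition. So for fixed $B_0$, the sum over $\pi$ factors as a product over the gaps of sums over $\NC(\text{gap})$. By the moment--cumulant formula again, each factor is the moment of the corresponding gap word; empty gaps contribute $1$. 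Hence
\[
\tau(X_i X_{j_1}\cdots X_{j_\ell}) = \sum_{k\ge 0}\sum_{t_1<\dots<t_k} K_{k+1}(X_i,X_{j_{t_1}},\dots,X_{j_{t_k}})\, \tau(X_{j_1}\cdots X_{j_{t_1-1}}) \cdots \tau(X_{j_{t_k+1}}\cdots X_{j_\ell}).
\]

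The last step is to match this with the right-hand side. By the explicit formula for $\partial_{\vec{\imath}}$ on monomials, we have
\[
\sum_{\vec{\imath}\in[m]^k} K_{k+1}(X_i,X_{i_1},\dots,X_{i_k})\,\tau^{\otimes(k+1)}[\partial_{\vec{\imath}} f(X)]
= \sum_{t_1<\dots<t_k} \sum_{\vec{\imath}} \mathbf{1}_{j_{t_s}=i_s\ \forall s}\, K_{k+1}(X_i,X_{i_1},\dots,X_{i_k}) \prod(\text{gap moments}).
\]
The indicator collapses the sum over $\vec{\imath}$ to $i_s = j_{t_s}$, which reproduces the expression above term by term. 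Terms with $k > \ell$ vanish, so the sum is finite.

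The only step requiring care is the bijection between $\NC(\ell+1)$ and the pairs consisting of the first block together with non-crossing partitions of the gaps. This is standard (it is the same decomposition that underlies the $R$-transform functional equation), and I would state it with a one-line non-crossing argument rather than belabor it. Everything else is bookkeeping of indices.
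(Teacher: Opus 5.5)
Your proposal is correct and follows the paper's proof essentially step for step. Both reduce to monomials, expand with the moment--cumulant formula over $\NC(\ell+1)$, split each partition into the block containing the position of $X_i$ plus non-crossing partitions of the gaps, resum each gap into a moment, and then match against the explicit formula for $\partial_{\vec{\imath}}$ on monomials; the only difference is cosmetic, namely that you label positions $0,\dots,\ell$ where the paper writes the block as $\{1,t_1+1,\dots,t_k+1\}$.
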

	
	\begin{proof}
		It suffices to prove the claim when $f$ is a monomial, say $f(x) = x_{j_1} \dots x_{j_\ell}$.  By the free moment-cumulant formula,
		\begin{equation}
		\tau(X_i f(X)) = \sum_{\pi \in \NC(\ell+1)} K_\pi(X_i,X_{j_1},\dots,X_{j_\ell}).
		\end{equation}
		For each partition $\pi$, let $B$ be the block containing $1$, and write $B$ as $\{1,t_1+1,\dots,t_k+1\}$ for some $1 \leq t_1 < \dots < t_k \leq \ell$.  Since $\pi$ is non-crossing, $\pi$ is the union of $B$ with the subpartitions between consecutive $t_j$'s, namely have
		\begin{equation}
		\pi = \{B\} \cup \pi_0 \cup \dots \cup \pi_k
		\end{equation}
		where $\pi_j$ is the restriction of $\pi$ to $\{t_j+2,\dots,t_j\}$, with the convention that $t_0 = 0$ and $t_k = \ell + 1$.  If we sum over all partitions $\pi$ with a fixed value of $B$, we can again apply the moment-cumulant formula within each interval $\{t_j+2,\dots,t_j\}$ to obtain
		\begin{equation}
		K_{k+1}(X_i,X_{j_{t_1}}, \dots, X_{j_{t_k}}) \tau(X_{j_1} \dots X_{j_{t_1-1}}) \tau(X_{j_{t_1+1}} \dots X_{j_{t_2-1}}) \dots \tau(X_{j_{t_k+1}} \dots X_{j_\ell})
		\end{equation}
		If we fix $k$ and sum over the possible values of $B$ with $|B| = k + 1$, this is equivalent to summing over $1 \leq t_1 < \dots < t_k \leq \ell$, which yields
		\begin{equation}
		\sum_{1 \leq t_1 < \dots < t_k \leq \ell} K_{k+1}(X_i,X_{j_{t_1}}, \dots, X_{j_{t_k}}) \tau(X_{j_1} \dots X_{j_{t_1-1}}) \tau(X_{j_{t_1+1}} \dots X_{j_{t_2-1}}) \dots \tau(X_{j_{t_k+1}} \dots X_{j_\ell}).
		\end{equation}
		By grouping the terms based on the value of $i_1 = j_{t_1}$, \dots, $i_k = j_{t_k}$, this is equal to
		\begin{equation}
		\sum_{\vec{\imath} \in [m]^k} K_{k+1}(X_i, X_{i_1},\dots,X_{i_k}) \tau^{\otimes (k+1)}[\partial_{\vec{\imath}} f(X)].
		\end{equation}
		Finally, we sum over $k$ which produces the right-hand side of \eqref{eq: Stein identity}.
	\end{proof}
	
	The Stein identity leads to a formula expressing $X_i$ as a series in terms of the higher-order conjugate variables, provided the series converges.
	
	\begin{proposition} \label{prop: conjugate variable identity}
		Let $X = (X_1,\dots,X_m)$ be a tuple of noncommutative random variables that generate $(M,\tau)$.  Suppose that for every $k \in \N$ and $\vec{\imath}$, $X$ admits an $\vec{\imath}$-conjugate variable $\xi_{\vec{\imath}}$.  Moreover, suppose that
		\begin{equation}
		\lim_{\ell \to \infty} \sum_{k=0}^\ell \sum_{\vec{\imath} \in [m]^k} \overline{K_{k+1}(X_i,X_{i_1},\dots,X_{i_k})} \xi_{\vec{\imath}} \text{ exists in } L^2(M,\tau) \text{ for } i \in [m].
		\end{equation}
		Then we have
		\begin{equation} \label{eq: conjugate variable identity}
			X_i = \lim_{\ell \to \infty} \sum_{k=0}^\ell \sum_{\vec{\imath} \in [m]^k} \overline{K_{k+1}(X_i,X_{i_1},\dots,X_{i_k})} \xi_{\vec{\imath}} \text{ for } i \in [m].
		\end{equation}
	\end{proposition}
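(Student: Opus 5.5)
The plan is to identify the limit by testing it against polynomials and then use density. Fix $i$ and write $\eta_\ell = \sum_{k=0}^\ell \sum_{\vec{\imath} \in [m]^k} \overline{K_{k+1}(X_i,X_{i_1},\dots,X_{i_k})} \xi_{\vec{\imath}}$ for the partial sums, and let $\eta = \lim_\ell \eta_\ell$ in $L^2(M,\tau)$, which exists by hypothesis. Since $X$ generates $M$, the polynomials $f(X)$, $f \in \C\ip{x_1,\dots,x_m}$, are dense in $L^2(M,\tau)$. It therefore suffices to show $\ip{\eta, f(X)}_\tau = \ip{X_i, f(X)}_\tau$ for every polynomial $f$, since then $\eta - X_i$ is orthogonal to a dense subspace and vanishes.

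The key step is to pair each partial sum with $f(X)$. The inner product $\ip{\cdot,\cdot}_\tau$ is conjugate-linear in the first slot, so the conjugates on the cumulants cancel. Using the defining relation \eqref{eq: higher order conjugate} for each $\xi_{\vec{\imath}}$, we get
\begin{equation*}
\ip{\eta_\ell, f(X)}_\tau = \sum_{k=0}^\ell \sum_{\vec{\imath} \in [m]^k} K_{k+1}(X_i,X_{i_1},\dots,X_{i_k}) \, \tau^{\otimes(k+1)}[\partial_{\vec{\imath}} f(X)].
\end{equation*}
Once $\ell \geq \deg f$, all higher derivatives $\partial_{\vec{\imath}} f$ vanish. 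By the Stein identity (Proposition \ref{prop: Stein identity}), this finite sum then equals $\tau(X_i f(X))$. Because $X_i$ is self-adjoint, $\tau(X_i f(X)) = \ip{X_i, f(X)}_\tau$.

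Letting $\ell \to \infty$ and using continuity of the inner product under $L^2$ convergence gives $\ip{\eta, f(X)}_\tau = \ip{X_i, f(X)}_\tau$, which concludes the argument.

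One bookkeeping point needs care: the empty string, $k=0$. There $\xi_\emptyset$ is the $\emptyset$-conjugate variable, namely $1$, since $\ip{1,f(X)} = \tau(f(X))$. The coefficient is $K_1(X_i) = \tau(X_i)$, and this term matches the singleton-block contribution in the Stein identity.

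I expect no step to be a genuine obstacle. The only points to check are the conjugation conventions in $\ip{\cdot,\cdot}_\tau$ and the density of polynomials in $L^2(\mathrm{W}^*(X))$. The convergence of the series, which is the real analytic input, is assumed in the hypothesis.
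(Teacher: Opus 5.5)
Your proposal is correct and follows the paper's proof essentially verbatim. Both arguments pair the partial sums against $f(X)$, use the defining relation of the $\vec{\imath}$-conjugate variables together with the Stein identity once $\ell \geq \deg f$, pass to the limit, and conclude by density of polynomials in $L^2(M,\tau)$. Your handling of the conjugate-linearity of $\ip{\cdot,\cdot}_\tau$, identifying the limit via $\ip{\eta, f(X)}_\tau = \ip{X_i, f(X)}_\tau$, is slightly more careful than the paper's shorthand $\tau(\eta_i f(X))$.
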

	
	\begin{proof}
		Let $\eta_i$ be the right-hand side of \eqref{eq: conjugate variable identity}.  For each polynomial $f$, if $\ell \geq \deg(f)$, we have by \eqref{eq: Stein identity} and \eqref{eq: higher order conjugate} that
		\begin{align}
			\tau(X_i f(X)) &= \sum_{k=0}^\ell \sum_{\vec{\imath} \in [m]^k} K_{k+1}(X_i,X_{i_1},\dots,X_{i_k}) \tau^{\otimes (k+1)}[\partial_{\vec{\imath}} f(X)] \\
			&= \sum_{k=0}^\ell \sum_{\vec{\imath} \in [m]^k} K_{k+1}(X_i,X_{i_1},\dots,X_{i_k}) \ip{\xi_{\vec{\imath}} f(X)}_\tau.
		\end{align}
		Taking $\ell \to \infty$, we have $\tau(X_i f(X)) = \tau(\eta_i f(X))$ for each $i \in [m]$, hence $X_i = \eta_i$ since polynomials are dense in $L^2(M,\tau)$.
	\end{proof}
	
	We will apply this identity to the linear combination of a given tuple $X$ with a free semicircular family $S$, in conjunction with the behavior of higher-order conjugate variables under conditioning \cite[Proposition 3.7]{VoiculescuFE5}.
	
	\begin{proposition} \label{prop: main conjugate variable bound}
		Let $X$ be a self-adjoint $m$-tuple and let $S$ be a freely independent standard semicircular $m$-tuple.  Let $a, b \in \R \setminus \{0\}$ with $a^2 + b^2 = 1$, and let $Y = aX + bS$.  Then $Y$ has conjugate variables of all orders given by
		\begin{equation} \label{eq: conditional expectation higher order}
			\xi_{\vec{\imath}} = b^{-k} E_{\mathrm{W}^*(Y)}[U_{\vec{\imath}}(S)].
		\end{equation}
		Furthermore, we have
		\begin{equation} \label{eq: main conjugate variable bound}
			\norm{Y_i - \xi_i}_2^2 \leq \sum_{k=0}^\infty \sum_{\vec{\imath} \in [m]^k} a^{2k+2} b^{-2k} |K_{k+1}(X_i,X_{i_1},\dots,X_{i_k}) - K_{k+1}(S_i,S_{i_1},\dots,S_{i_k})|^2,
		\end{equation}
		Here note that $K_{k+1}(S_1,S_{i_1},\dots,S_{i_k}) = \delta_{k=1} \delta_{i,i_1}$.
	\end{proposition}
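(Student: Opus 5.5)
We prove the two claims in turn: first the formula \eqref{eq: conditional expectation higher order} for the higher-order conjugate variables, then the bound \eqref{eq: main conjugate variable bound}, which will follow from Proposition \ref{prop: conjugate variable identity} together with an orthogonality computation.

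\textbf{Step 1: conjugate variables of $Y$.} By Example \ref{ex: Chebyshev}, $U_{\vec{\imath}}(S)$ is the $\vec{\imath}$-conjugate variable of $S$. The first task is to check that $b^{-k} U_{\vec{\imath}}(S)$ satisfies the $\vec{\imath}$-conjugate relation for $Y$ relative to $\mathrm{W}^*(X,S)$, in the sense that
\[
\ip{b^{-k}U_{\vec{\imath}}(S), f(Y)}_\tau = \tau^{\otimes(k+1)}[\partial_{\vec{\imath}} f(Y)].
\]
I would prove this for monomials $f$ by expanding $f(aX+bS)$ multilinearly. Freeness of $S$ from $X$ gives
\[
\tau(U_{\vec{\imath}}(S)\, g(X,S)) = \tau^{\otimes(k+1)}[\partial^{S}_{\vec{\imath}} g(X,S)],
\]
where $\partial^S$ differentiates only in the $S$-variables. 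This is the standard fact that conjugate variables persist under free products (Voiculescu), extended to higher order by the same inductive argument as in Example \ref{ex: Chebyshev}. The chain rule then gives $\partial^S_{\vec{\imath}}[f(aX+bS)] = b^k (\partial_{\vec{\imath}} f)(Y)$, which yields the displayed relation. Finally, projecting with $E_{\mathrm{W}^*(Y)}$ preserves the pairing against $f(Y)$, so $\xi_{\vec{\imath}} = b^{-k} E_{\mathrm{W}^*(Y)}[U_{\vec{\imath}}(S)]$ lies in $L^2(\mathrm{W}^*(Y))$ and is the required conjugate variable.

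\textbf{Step 2: the identity for $Y_i$.} Cumulants are multilinear, and mixed cumulants of the free families $X$ and $S$ vanish by Theorem \ref{thm: freeness and cumulants}. Hence
\[
K_{k+1}(Y_i,Y_{i_1},\dots,Y_{i_k}) = a^{k+1}K_{k+1}(X_i,X_{i_1},\dots,X_{i_k}) + b^{k+1}\delta_{k=1}\delta_{i,i_1}.
\]
Write $c_{i,\vec{\imath}} = a^{k+1} b^{-k}\bigl(K_{k+1}(X_i,X_{i_1},\dots,X_{i_k}) - K_{k+1}(S_i,S_{i_1},\dots,S_{i_k})\bigr)$; these are real because the $X_j$ are self-adjoint. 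Then
\[
K_{k+1}(Y_i,Y_{i_1},\dots,Y_{i_k})\, b^{-k} = c_{i,\vec{\imath}} + \delta_{k=1}\delta_{i,i_1},
\]
using $a^2 + b^2 = 1$ for the $k=1$ term: $(a^2 + b^2)/b \cdot b^{-0}$ with the $b^{-1}$ absorbed consistently. Precisely, $a^2 K(X_i,X_{i_1}) b^{-1}$ combines with $b^{2} b^{-1}\delta$, and after subtracting $\xi_i = b^{-1}E[S_i]$ the surviving coefficient is $a^2 b^{-1}(K - \delta)$. This bookkeeping has to be checked carefully.

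Assume the right-hand side of \eqref{eq: main conjugate variable bound} is finite; otherwise there is nothing to prove. The vectors $U_{\vec{\imath}}(S)$ are orthonormal, so
\[
\eta = \sum_{k,\vec{\imath}} c_{i,\vec{\imath}}\, U_{\vec{\imath}}(S)
\]
converges in $L^2$ with $\norm{\eta}_2^2 = \sum_{k,\vec{\imath}} |c_{i,\vec{\imath}}|^2$. Since $E_{\mathrm{W}^*(Y)}$ is a contraction, the series in Proposition \ref{prop: conjugate variable identity} converges: it equals $E_{\mathrm{W}^*(Y)}[\eta]$ plus the $\xi_i$ term coming from the semicircular part. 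Applying that proposition to $Y$, which generates $\mathrm{W}^*(Y)$, gives
\[
Y_i - \xi_i = E_{\mathrm{W}^*(Y)}[\eta],
\]
and therefore $\norm{Y_i - \xi_i}_2^2 \leq \norm{\eta}_2^2$, which is the desired bound.

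\textbf{Main obstacle.} The hardest step is Step 1: justifying the higher-order conjugate relation for the free product rigorously, including the chain rule for mixed polynomials in $(X,S)$. The second delicate point is the exact accounting of the $k=1$ term, namely that $\xi_i$ cancels the semicircular cumulant contribution precisely.
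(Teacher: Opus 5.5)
Your proposal is correct and follows the paper's proof essentially step for step. Step 1 is the conjugate relation for $U_{\vec\imath}(S)$ in the free product with $X$, the chain rule $\partial^S_{\vec\imath}[f(aX+bS)] = b^k(\partial_{\vec\imath}f)(Y)$ and conditional expectation, which give \eqref{eq: conditional expectation higher order}. Step 2 expands $Y_i$ via Proposition~\ref{prop: conjugate variable identity} in the orthonormal Chebyshev basis and subtracts $\xi_i$. Three points need correcting, none of which affects the final bound.

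\textbf{The $k=1$ bookkeeping.} Your displayed identity is wrong as written. It should read $b^{-k}K_{k+1}(Y_i,Y_{i_1},\dots,Y_{i_k}) = c_{i,\vec\imath} + b^{-1}\delta_{k=1}\delta_{i,i_1}$, because $b + a^2b^{-1} = b^{-1}$. The leftover term is therefore exactly $b^{-1}E_{\mathrm{W}^*(Y)}[S_i] = \xi_i$, which is what your following sentences correctly use. With your "$+\delta$" version the leftover would instead be $b\,\xi_i$, and the conclusion $Y_i - \xi_i = E_{\mathrm{W}^*(Y)}[\eta]$ would not follow.

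\textbf{Reality of cumulants.} Cumulants of a self-adjoint tuple need not be real; one only has $\overline{K_n(X_{i_1},\dots,X_{i_n})} = K_n(X_{i_n},\dots,X_{i_1})$. You should therefore put $\overline{c_{i,\vec\imath}}$ in $\eta$, as in Proposition~\ref{prop: conjugate variable identity}. This leaves $\norm{\eta}_2$ unchanged.

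\textbf{The pairing in Step 1.} The relation should be $\tau(U_{\vec\imath}(S)^* g(X,S)) = \tau^{\otimes(k+1)}[\partial^S_{\vec\imath} g(X,S)]$, not $\tau(U_{\vec\imath}(S) g(X,S))$, because $U_{\vec\imath}(S)$ is not self-adjoint in general. For example, with $g = S_1S_2$ and $\vec\imath = (1,2)$, the left side without the adjoint is $\tau(S_1S_2S_1S_2)=0$, while the right side is $1$.
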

	
	\begin{proof}
		Consider the $2m$-tuple $(S,X)$.  For $\vec{\imath} \in [m]^k$, the variable $U_{\vec{\imath}}(S)$ is a $\vec{\imath}$-conjugate variable for $(S,X)$ by the same argument as \cite[Proposition 3.6]{VoiculescuFE5}.  Then as in \cite[Proposition 3.7]{VoiculescuFE5},
		\begin{align}
			\ip{U_{\vec{\imath}}(S), f(Y)} &= \ip{U_{\vec{\imath}}(S), f(aX + bS)} \\
			&= \tau^{\otimes (k+1)}[\partial_{i_1,\dots,i_k}^S f(aX + bS)] \\
			&= b^k \tau^{\otimes (k+1)}[\partial_{i_1,\dots,i_k}^Y f(Y)],
		\end{align}
		where the superscripts $S$ and $Y$ indicate the variables of differentiation.  This yields \eqref{eq: conditional expectation higher order}.
		
		To prove \eqref{eq: main conjugate variable bound}, assume without loss of generality that the right-hand side is finite.  Moreover, note that for $\vec{\imath} \in [m]^k$,
		\begin{equation}
		K_{k+1}(Y_i,Y_{i_1},\dots,Y_{i_k}) = a^{k+1} K_{k+1}(X_i,X_{i_1},\dots,X_{i_k}) + b^{k+1} K_{k+1}(S_i,S_{i_1},\dots,S_{i_k})
		\end{equation}
		by vanishing of mixed cumulants under freeness \cite[Theorem 11.15]{NS2006}.  In particular,
		\begin{equation}
		\sum_{k=0}^\infty \sum_{\vec{\imath} \in [m]^k} |b^{-k} K_{k+1}(Y_i,Y_{i_1},\dots,Y_{i_k})|^2 < \infty.
		\end{equation}
		Since the Chebyshev polynomials are orthonormal for the semicircular family, the following is a convergent orthogonal sum in $L^2(M,\tau)$:
		\begin{equation}
		\eta_i = \sum_{k=0}^\infty \sum_{\vec{\imath} \in [m]^k} b^{-k} \overline{K(Y_i,Y_{i_1},\dots,Y_{i_k})} U_{\vec{\imath}}(S).
		\end{equation}
		Taking the conditional expectation onto $\mathrm{W}^*(Y)$, we obtain
		\begin{equation}
		E_{\mathrm{W}^*(Y)}[\eta_i] = \lim_{\ell \to \infty} \sum_{k=0}^\ell \sum_{\vec{\imath} \in [m]^k} b^{-k} \overline{K(Y_i,Y_{i_1},\dots,Y_{i_k})} \xi_{\vec{\imath}} = Y_i
		\end{equation}
		by Proposition \ref{prop: conjugate variable identity}.  Thus,
		\begin{equation}
		Y_i = E_{\mathrm{W}*(Y)} \left[ \sum_{k=0}^\infty \sum_{\vec{\imath} \in [m]^k} \left( a^{k+1} b^{-k} \overline{K(X_i,X_{i_1},\dots,X_{i_k})} + b \overline{K(S_i,S_{i_1},\dots,S_{i_k})} \right) U_{\vec{\imath}}(S) \right].
		\end{equation}
		Subtract $\xi_i = b^{-1} E_{\mathrm{W}^*(Y)}[U_i(S)]$ from both sides to obtain
		\begin{equation}
		Y_i - \xi_i = E_{\mathrm{W}*(Y)} \left[ \sum_{k=0}^\infty \sum_{\vec{\imath} \in [m]^k} a^{k+1} b^{-k} \left( \overline{K(X_i,X_{i_1},\dots,X_{i_k})} - \overline{K(S_i,S_{i_1},\dots,S_{i_k})} \right) U_{\vec{\imath}}(S) \right],
		\end{equation}
		noting that $(b^{-1} - b) = b^{-1}(1 - b^2) = b^{-1} a^2$ and only one of the cumulants of the semicirculars is nonzero.  Thus, we conclude \eqref{eq: main conjugate variable bound} using the contractivity of the conditional expectation and orthonormality of $U_{\vec{\imath}}(S)$.
	\end{proof}
	
	\subsection{Upper bound for main example}
	
	To apply Proposition \ref{prop: main conjugate variable bound} to our counterexample in the main theorem, we need to estimate the free cumulants of $(X_1,\dots,X_{m+1})$.
	
	\begin{proposition} \label{prop: graph independent bound}
		Let $(X_1,\dots,X_m)$ be a standard free semicircular family and let $X_{m+1}$ be a semicircular operator in tensor position with $(X_1,\dots,X_m)$.  For $\vec{\imath} = (i_1,\dots,i_\ell) \in \{0,\dots,m\}^\ell$, let $z(\vec{\imath})$ be the number of occurrences of $0$ in $\vec{\imath}$. Then the free cumulants satisfy
		\begin{align}
			K_1(X_i) &= 0, \\
			K_2(X_{i_1},X_{i_2}) &= \delta_{i_1,i_2} \\
			K_\ell(X_{i_1},\dots,X_{i_\ell}) &= 0 \text{ for odd } \ell \\
			0 \leq K_{\ell}(X_{i_1},\dots,X_{i_{\ell}}) &\leq 2^{\ell} \text{ for even } \ell. \label{eq: cumulant bound for even ell}
		\end{align}
		Moreover,
		\begin{equation} \label{eq: L1 sum of cumulants}
		\sum_{\vec{\imath} \in [m+1]^{\ell}} K_\ell(X_{i_1},\dots,X_{i_{\ell}}) \leq 2^{\ell}(1 + m^{1/2})^{\ell} \text{ for even } \ell.
		\end{equation}
		and
		\begin{equation} \label{eq: L2 sum of cumulants}
			\sum_{\vec{\imath} \in [m+1]^{\ell}} K_\ell(X_{i_1},\dots,X_{i_{\ell}})^2 \leq 4^\ell (1 + m^{1/2})^{\ell} \text{ for even } \ell.
		\end{equation}
	\end{proposition}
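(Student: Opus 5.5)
The plan is to write every mixed moment of $(X_1,\dots,X_{m+1})$ as a count of admissible pairings, and then read off the free cumulants as the counts of those pairings that are ``connected''. Throughout, write the index of $X_{m+1}$ as $0$, matching the notation $z(\vec{\imath})$.

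\textbf{Step 1: moments as pairing counts.} Since $X_0$ commutes with and is tensor independent from $(X_1,\dots,X_m)$, the trace factors: $\tau(X_{i_1}\cdots X_{i_\ell})$ is the trace of the word in the nonzero letters (in their original order) times $\tau(X_0^{z(\vec{\imath})})$. By Wick's formula for a free semicircular family, the first factor counts the non-crossing pairings of the nonzero positions that pair equal indices. The second factor is a Catalan number, i.e.\ the number of non-crossing pairings of the zero positions. Call a pair partition $\pi$ of $[\ell]$ \emph{admissible} for $\vec{\imath}$ if
\begin{itemize}
\item each pair joins equal indices,
\item no two pairs both carrying nonzero indices cross, and
\item no two pairs both carrying the index $0$ cross.
\end{itemize}
Crossings between a $0$-pair and a nonzero pair are allowed. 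Then
\begin{equation}
\tau(X_{i_1}\cdots X_{i_\ell})=\#\{\pi \text{ admissible for } \vec{\imath}\}.
\end{equation}
This is exactly the graph-independence (Mlotkowski / Speicher--Wysoczański) description of \cite{Mlotkowski2004,SpWys2016}.

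\textbf{Step 2: cumulants as connected admissible pairings.} This is the step I expect to be the main obstacle. I will show that $K_\ell(X_{i_1},\dots,X_{i_\ell})$ equals the number of admissible pairings whose non-crossing closure is the one-block partition $1_\ell$. Equivalently, these are the pairings whose crossing graph connects all of $[\ell]$ after filling in nestings. The argument has three parts.
\begin{enumerate}[(a)]
\item Every pairing $\pi$ has a unique non-crossing closure $\hat\pi\in\NC(\ell)$, namely the finest non-crossing partition coarser than $\pi$.
\item Admissibility is a condition on individual pairs and on pairs of pairs. Since two pairs lying in different blocks of $\hat\pi$ cannot cross, $\pi$ is admissible if and only if its restriction to each block of $\hat\pi$ is admissible for the subword indexed by that block.
\item Grouping the pairings counted in Step 1 according to $\hat\pi$ therefore gives
\begin{equation}
\tau(X_{i_1}\cdots X_{i_\ell})=\sum_{\sigma\in\NC(\ell)}\prod_{B\in\sigma} c_B,
\end{equation}
where $c_B$ counts the admissible pairings of the subword on $B$ whose closure is the single block $B$.
\end{enumerate}
By the uniqueness of free cumulants under Möbius inversion, $K_{|B|}$ of that subword equals $c_B$. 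If one prefers, this can be cited directly from \cite{SpWys2016}.

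The first four claims of the proposition then follow.
\begin{itemize}
\item The cumulants are nonnegative, and they vanish for odd $\ell$ because no pairing exists.
\item $K_1=0$ trivially.
\item $K_2(X_{i_1},X_{i_2})=\delta_{i_1,i_2}$, since the only candidate is the single pair.
\item For the upper bound in \eqref{eq: cumulant bound for even ell}, bound the number of connected admissible pairings by the number of all admissible pairings. This is at most $\mathrm{Cat}(z/2)\,\mathrm{Cat}((\ell-z)/2)\le 2^{z}2^{\ell-z}=2^\ell$, using $\mathrm{Cat}(n)\le 4^n$.
\end{itemize}

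\textbf{Step 3: summing over index strings.} For \eqref{eq: L1 sum of cumulants}, sum the Step 2 bound over all index strings, organising the count by the number $z$ of zero positions:
\begin{itemize}
\item choose the set of zero positions, in $\binom{\ell}{z}$ ways;
\item choose a non-crossing pairing of the zero positions and a non-crossing pairing of the remaining positions, in at most $2^z\cdot 2^{\ell-z}$ ways;
\item assign a colour in $[m]$ to each of the $(\ell-z)/2$ nonzero pairs, in $m^{(\ell-z)/2}$ ways.
\end{itemize}
Every admissible pairing together with its index string arises at least once this way, which gives
\begin{equation}
\sum_{\vec{\imath}} K_\ell \le 2^\ell\sum_{z}\binom{\ell}{z} m^{(\ell-z)/2}\le 2^\ell(1+m^{1/2})^\ell.
\end{equation}
Finally, \eqref{eq: L2 sum of cumulants} follows from $K_\ell^2\le 2^\ell K_\ell$ (valid since $0\le K_\ell\le 2^\ell$) together with \eqref{eq: L1 sum of cumulants}.
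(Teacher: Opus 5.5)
Your proposal is correct and follows the paper's proof. The paper simply cites the Speicher--Wysocza\'nski formula $K_\ell=\sum_{\pi\in\NC^0(A,B)}K_\pi$, which for semicircular variables is exactly your count of connected admissible pairings, and then uses the same Catalan bound, the same $\binom{\ell}{z}m^{(\ell-z)/2}$ counting, and the same $\ell^1$--$\ell^\infty$ step ($K_\ell^2\le 2^\ell K_\ell$). Your Step 2 (grouping pairings by their non-crossing closure and invoking uniqueness of cumulants) is a valid self-contained derivation of this special case, which the paper does not spell out.
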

	
	\begin{proof}
		The first and second order cumulants are simply the mean and covariance, so the first two lines are immediate and we focus on the cumulants of order at least three.  Tensor independence is a special case of graph independence (also known as $\varepsilon$-independence or $\Lambda$-independence).  This allows us to apply a formula of Speicher and Wysocza{\'n}ski \cite{SpWys2016} for free cumulants in a graph product, here in the special case of a tensor product.
		
		Consider a partition of $[\ell]$ into two sets $A$ and $B$ and a partition $\pi$ of $[\ell]$.  We say that $\pi \in \NC(A,B)$ if $\pi \leq \{A,B\}$ and $\pi|_A$ and $\pi|_B$ are non-crossing.  We say that $\pi \leq \pi'$ if every block of $\pi$ is contained in a block of $\pi'$.  Moreover, the \emph{non-crossing envelope} of $\pi$ is smallest non-crossing partition $\pi'$ satisfying $\pi \leq \pi'$.  Finally, let $\NC^0(A,B)$ be the set of $\pi \in \NC(A,B)$ such that the non-crossing envelope of $\pi$ is the partition where all the elements are in one block.  Then Speicher and Wysocza{\'n}ski's formula \cite[Theorem 5.2]{SpWys2016} tells us that if $X_{m+1}$ is in tensor position with $(X_1,\dots,X_m)$.
		\begin{equation} \label{eq: partition expansion}
		K_\ell(X_{i_1} \dots X_{i_\ell}) = \sum_{\pi \in NC^0(A(\vec{\imath}),B(\vec{\imath}))} K_\pi(X_{i_1},\dots,X_{i_\ell}),
		\end{equation}
		where $A(\vec{\imath})$ is the set of indices where $i_j = 0$ and $B(\vec{\imath})$ is the set of indices where $i_j > 0$.  Now because $X_{m+1}$ and $(X_1,\dots,X_m)$ are semicircular, $K_\pi$ will vanish unless $\pi$ is a pair partition.  In particular, $K_\ell(X_{i_1},\dots,X_{i_\ell})$ vanishes when $\ell$ is odd.  In the even case, the only $\pi$ that contribute are pair partitions such that $\pi|_A$ and $\pi|_B$ are non-crossing.  The numbers of non-crossing pair partitions of $A$ and $B$ respectively are bounded by $2^{|A|}$ and $2^{|B|}$ respectively, which follows from the fact that the number of pair partitions of $[\ell]$ is the $\ell$th moment of the standard semicircular distribution (see Proposition \ref{prop: cumulants of semicircular}).  Therefore,
		\begin{equation}
		K_\pi(X_{i_1},\dots,X_{i_\ell}) \leq 2^{|A|} 2^{|B|} = 2^\ell.
		\end{equation}
		
		To prove \eqref{eq: L1 sum of cumulants}, note that by free independence, the contribution of a partition $\pi$ is zero unless $\pi$ respects the coloring $\vec{\imath}$, meaning that $s$ and $t$ are in the same block of $\pi$, then $i_s = i_t$.  For a fixed $A$ and $B$, we write
		\begin{equation}
		\sum_{\substack{\vec{\imath} \in [m+1]^\ell: \\ A(\vec{\imath}) = A}}
		K_\ell(X_{i_1} \dots X_{i_\ell}) = \sum_{\pi \in NC^0(A,B)} \sum_{\substack{\vec{\imath} \in [m+1]^\ell: \\ A(\vec{\imath}) = A}} K_\pi(X_{i_1},\dots,X_{i_\ell}).
		\end{equation}
		If we fix $\pi$ appearing in the sum, the number of pairs in $A$ is $|A|/2$ and the number of pairs in $B$ is $|B|/2$.  Each pair in $A$ has the variable $X_{m+1}$ and each pair in $B$ must be assigned an index in $[m]$.  This means there are $m^{|B|/2}$ choices of indices, so $\pi$ contributes $m^{|B|/2}$ to the sum.  Moreover, as noted above, the number of pair partitions $\pi \in \NC^0(A,B)$ is at most $2^\ell$. Therefore,
		\begin{equation}
		\sum_{\substack{\vec{\imath} \in [m+1]^\ell: \\ A(\vec{\imath}) = A}} K_\pi(X_{i_1},\dots,X_{i_\ell}) \leq 2^\ell m^{|B|/2}.
		\end{equation}
		Finally, we sum over the choices of $(A,B)$ using the binomial expansion to get $2^\ell (1 + m^{1/2})^\ell$, which finishes the proof of \eqref{eq: L1 sum of cumulants}.  Next, \eqref{eq: L2 sum of cumulants} follows by combining \eqref{eq: cumulant bound for even ell} and \eqref{eq: L1 sum of cumulants}, since for any vector $v$, $\norm{v}_{\ell^2} \leq \norm{v}_{\ell^1}^{1/2} \norm{v}_{\ell^\infty}^{1/2}$.
	\end{proof}
	
	\begin{proposition}
		Let $X = (X_1,\dots,X_{m+1})$ be as in Proposition \ref{prop: graph independent bound}.  Let $S = (S_1,\dots,S_{m+1})$ be a standard semicircular family freely independent from $X$.  Let $a, b \in \R \setminus \{0\}$ with $a^2 + b^2 = 1$.  Let $Y = aX + bS$ and let $\xi_i$ be the $i$th conjugate variable for $Y$.  Then whenever $4a^2 (1 + m^{1/2}) < b^2$, we have
		\begin{equation} \label{eq: example free Fisher upper bound}
			\sum_{i=0}^m \norm{\xi_i - Y_i}_2^2 \leq b^{-2} \left[ \frac{(2a)^8 (m^{1/2} + 1)^4}{b^4 - (2a)^4(m^{1/2} + 1)^2} \right].
		\end{equation} 
	\end{proposition}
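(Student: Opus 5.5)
The plan is to apply Proposition \ref{prop: main conjugate variable bound} directly to the $(m+1)$-tuple $X$ and then estimate the resulting series of squared cumulant differences with Proposition \ref{prop: graph independent bound}. Summing \eqref{eq: main conjugate variable bound} over $i$ gives
\begin{equation*}
\sum_{i} \norm{Y_i - \xi_i}_2^2 \leq \sum_{k=0}^\infty a^{2k+2} b^{-2k} \sum_{(i,i_1,\dots,i_k)} |K_{k+1}(X_i,X_{i_1},\dots,X_{i_k}) - K_{k+1}(S_i,S_{i_1},\dots,S_{i_k})|^2 ,
\end{equation*}
where the inner sum runs over all strings of length $\ell = k+1$ in the index set of size $m+1$. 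Thus the whole problem reduces to controlling, for each length $\ell$, the $\ell^2$-norm over index strings of the difference between the cumulants of $X$ and those of a free semicircular family.

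First I check which orders contribute. By Proposition \ref{prop: graph independent bound}, $K_1(X_i) = 0$ and $K_2(X_{i_1},X_{i_2}) = \delta_{i_1,i_2}$. These match the semicircular cumulants from Proposition \ref{prop: cumulants of semicircular}, so the terms with $\ell = 1,2$ (that is, $k = 0,1$) vanish. The odd cumulants of $X$ vanish, and all semicircular cumulants of order at least $3$ vanish, so only even $\ell \geq 4$ survive. For those, the squared difference is just $K_\ell(X_{i_1},\dots,X_{i_\ell})^2$, and \eqref{eq: L2 sum of cumulants} bounds the sum over strings by $4^\ell (1+m^{1/2})^\ell$.

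Writing $\ell = k+1 = 2j$ with $j \geq 2$, the weight is $a^{2\ell} b^{-2(\ell-1)} = b^2 (a^2/b^2)^{\ell}$. The bound therefore becomes
\begin{equation*}
b^{2} \sum_{j \geq 2} \left( \frac{4a^2(1+m^{1/2})}{b^2} \right)^{2j} = b^2 \sum_{j\ge 2} q^j, \qquad q = \frac{(2a)^4(1+m^{1/2})^2}{b^4}.
\end{equation*}
The hypothesis $4a^2(1+m^{1/2}) < b^2$ gives exactly $q < 1$, so the geometric series converges to
\begin{equation*}
b^2 \cdot \frac{q^2}{1-q} = \frac{(2a)^8(1+m^{1/2})^4}{b^2\bigl(b^4 - (2a)^4(1+m^{1/2})^2\bigr)} ,
\end{equation*}
which is the right-hand side of \eqref{eq: example free Fisher upper bound}.

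The main point needing care is bookkeeping rather than any new idea. I need to confirm the power of $b$: the exponent is $-2k = -2(\ell-1)$, which produces the factor $b^{2}$ and hence the overall $b^{-2}$ prefactor in the statement after simplification. I also need to confirm that \eqref{eq: L2 sum of cumulants} sums over all $\ell$ indices including the first one $i$, which matches the double sum over $i$ and $\vec{\imath}$.

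Finally, I should justify the hypotheses of Proposition \ref{prop: main conjugate variable bound}. Finiteness of its right-hand side follows from the convergence just shown. The indexing by $\{0,\dots,m\}$ versus $[m+1]$ is only a relabeling.
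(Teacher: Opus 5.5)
Your proposal is correct and is essentially the paper's own proof: apply the general cumulant bound for $aX+bS$, and observe that the $k=0,1$ terms cancel against the semicircular cumulants. Then shift to $\ell=k+1$ to extract the weight $b^2(a/b)^{2\ell}$, insert the $\ell^2$ cumulant bound $4^\ell(1+m^{1/2})^\ell$ for even $\ell\ge 4$, and sum the geometric series in $q=(2a)^4(1+m^{1/2})^2/b^4<1$. Your bookkeeping, in particular summing over the first index $i$ together with $\vec{\imath}$, is if anything slightly more careful than the paper's displayed computation.
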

	
	\begin{proof}
		We apply \eqref{eq: main conjugate variable bound} for $i$.  First, note that the $k = 0$ and $k = 1$ terms vanish since $X$ has mean zero and covariance identity.  Hence, we have
		\begin{align}
			\sum_{i=0}^m \norm{Y_i - \xi_i}_2^2 &\leq \sum_{k=2}^\infty \sum_{\vec{\imath} \in [m+1]^k} a^{2k+2} b^{-2k} |K_{k+1}(X_i,X_{i_1},\dots,X_{i_k})|^2 \\
			&\leq b^2 \sum_{k=3}^\infty \sum_{\vec{\imath} \in [m+1]^k} a^{2k} b^{-2k} |K_k(X_{i_1},\dots,X_{i_k})|^2.
		\end{align}
		By Proposition \ref{prop: graph independent bound}, the odd terms are zero and for even $k$,
		\begin{equation}
		\sum_{\vec{\imath} \in [m+1]^k} |K_k(X_{i_1},\dots,X_{i_k})|^2 \leq 2^{2k} (m^{1/2}+1)^k.
		\end{equation}
		Thus, taking $k = 2\ell$,
		\begin{equation}
		\sum_{k=3}^\infty \sum_{\vec{\imath} \in [m+1]^k} a^{2k} b^{-2k} |K_k(X_{i_1},\dots,X_{i_k})|^2 \leq \sum_{\ell=2}^\infty a^{4\ell} b^{-4\ell} 2^{4 \ell} (m^{1/2}+1)^{2\ell} = \frac{[2a/b]^8(m^{1/2}+1)^4}{1 - [2a/b]^4(m^{1/2}+1)^2}.
		\end{equation}
		The asserted formula then follows by algebraic manipulations.
	\end{proof}

	\section{Detection of block structure} \label{sec: block detection}
	
	We will use the information-cost inequality to lower-bound the square root of the Fisher information of $Y^{(n)}(t)$ by the Wasserstein distance $W_2(Y^{(n)}(t),S^{(n)})$, and in turn bound this below by $\mathbb{E} [\psi_k^{(n)}(Y^{(n)}(t))] - \mathbb{E}[\psi(S^{(n)})]$ for some Lipschitz function $\psi$.  The function $\psi_k^{(n)}$ measures the maximum $2$-norm of $\sum_j p_jxp_j$ where $p_1$, \dots, $p_k$ are projections of trace $\approx 1/k$.  This can be understood as a way of detecting block structure as in OpenAI's argument.
	
	We will show that $\mathbb{E}[\psi_k^{(n)}(S^{(n)})]$ is relatively small, while $\mathbb{E}[\psi_k^{(n)}(Y^{(n)}(t))]$ is comparatively larger for the right choices of parameters.  To motivate the lower bound for $Y^{(n)}(t)$, note that since $X_{m+1}$ commutes with $X_1$, \dots, $X_m$, we can naturally use spectral projections of $X_{m+1}$ to produce a block structure for $X$, namely,
	\begin{equation}
	X = \sum_{j=1}^k p_j X p_j, \text{ where } p_j = \mathbbm{1}_{[\lambda_{j-1},\lambda_j]}(X_{m+1})
	\end{equation}
	for appropriate $\lambda_0 < \dots < \lambda_k$.  On the other hand, the free semicircular family $S$ fails to commute with the $p_j$'s, and thus is as far as possible from having such a block decomposition.  Hence, $\sum_{j=1}^k p_j X(t) p_j$ will be approximately $e^{-t/2} X(0)$.  We therefore expect $\mathbb{E}[\psi_k^{(n)}(Y^{(n)}(t))]$ to be approximately $\sqrt{m+1} e^{-(t_0+t)/2}$.
	
	\begin{notation} \label{not: partition test}
		Fix $n$ and $k$.  Let $q$ and $r$ be the integer quotient and remainder upon dividing $n$ by $k$, so $n = kq + r$.  Let $\mathcal{P}_k^{(n)}$ be the collection of $k$-tuples of projections $(p_1,\dots,p_k)$ in $\mathbb{M}_n$ such that
		\begin{equation}
			\sum_{j=1}^k p_j = 1, \qquad
			\tr_n(p_j) = \frac{q+1}{n} \text{ for } j = 1, \dots r, \qquad
			\tr_n(p_j) = \frac{q}{n} \text{ for } j = r+1, \dots, k.
		\end{equation}
		For $p \in \mathcal{P}_k^{(n)}$, define
		\begin{equation} \label{eq: paving}
			\mathcal{E}_p(x) = \sum_{j=1}^k p_jxp_j, \text{ where } (p_jx_1p_j,\dots,p_jx_mp_j).
		\end{equation}
		Then define
		\begin{equation} \label{eq: def of psi}
			\psi_k^{(n)}(x) = \sup_{p \in \mathcal{P}_k^{(n)}} \norm*{\mathcal{E}_p(x)}_2.
		\end{equation}
	\end{notation}
	
	\subsection{Upper bound for Gaussian case}
	
	The first part of the argument will be to obtain an upper bound for $\mathbb{E}[\psi_k^{(n)}(S^{(n)})]$.  Since we work only with the Gaussian in this section, we return to the setting of $m$ variables, that is, $S^{(n)} = (S_1^{(n)},\dots,S_m^{(n)}) \sim \gamma_m^{(n)}$, and then substitute $m+1$ for $m$ later when analyzing the main example.  We require a uniform upper bound for $\norm*{\sum_{j=1}^k p_j x p_j}_2$ to hold for all $p \in \mathcal{P}_k^{(n)}$ with high probability.   We first bound the expectation for each $p$, and afterward we derive a uniform bound using concentration inequalities.
	
	\begin{lemma}
		For $p \in \mathcal{P}_k^{(n)}$,
		\begin{equation} \label{eq: upper bound on expectation}
			\mathbb{E} \norm*{\mathcal{E}_p(S^{(n)})}_2 \leq \sqrt{m} \left( \frac{1}{k} + \frac{1}{n} \right)^{1/2}.
		\end{equation}
	\end{lemma}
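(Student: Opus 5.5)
We will bound the second moment and then apply Jensen's inequality, $\mathbb{E}\norm{\mathcal{E}_p(S^{(n)})}_2 \leq (\mathbb{E}\norm{\mathcal{E}_p(S^{(n)})}_2^2)^{1/2}$. Since the coordinates of $S^{(n)}$ are i.i.d.\ and $\norm{\mathcal{E}_p(x)}_2^2 = \sum_{i=1}^m \norm{\mathcal{E}_p(x_i)}_2^2$, it suffices to show that for a single GUE matrix $S = S_1^{(n)}$,
\begin{equation}
\mathbb{E} \norm{\mathcal{E}_p(S)}_2^2 \leq \frac{1}{k} + \frac{1}{n}.
\end{equation}

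To compute this, we use the fact that $\mathcal{E}_p$ is the orthogonal projection of $((\mathbb{M}_n)_{\sa}, \ip{\cdot,\cdot}_{\tr_n})$ onto the real subspace $V_p$ of block-diagonal self-adjoint matrices $\bigoplus_j p_j \mathbb{M}_n p_j$. The GUE matrix is an isotropic Gaussian in this real inner product space. In the normalization of Notation~\ref{not: GUE tuple}, each orthonormal coordinate has variance $1/n^2$; this is consistent with $\mathbb{E}\norm{S}_2^2 = 1$, since $\dim_\R (\mathbb{M}_n)_{\sa} = n^2$. Hence
\begin{equation}
\mathbb{E}\norm{\mathcal{E}_p(S)}_2^2 = \frac{\dim_\R V_p}{n^2}.
\end{equation}
Equivalently, and without leaning on the normalization, one can use unitary invariance to assume the $p_j$ are diagonal coordinate projections. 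Then $\norm{\mathcal{E}_p(S)}_2^2 = \frac{1}{n}\sum_j \sum_{a,b \in \text{block } j} |S_{ab}|^2$, and since $\mathbb{E}|S_{ab}|^2 = 1/n$ for all entries, this gives $\frac{1}{n^2}\sum_j d_j^2$, where $d_j = n\tr_n(p_j)$.

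It then remains to bound $\sum_j d_j^2/n^2$, where $r$ of the $d_j$ equal $q+1$ and $k-r$ equal $q$, with $n = kq + r$. We write
\begin{equation}
\sum_j d_j^2 \leq (q+1)\sum_j d_j = (q+1)n \leq \left(\frac{n}{k} + 1\right)n,
\end{equation}
so dividing by $n^2$ gives exactly $\frac{1}{k} + \frac{1}{n}$. Multiplying by $m$ and taking square roots yields \eqref{eq: upper bound on expectation}.

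The only delicate point is the normalization bookkeeping, namely confirming that the entry variance is $1/n$, equivalently $\mathbb{E}\tr_n(S^2) = 1$ for a standard GUE. This follows from convergence to the standard semicircular law and the exact Gaussian computation. Everything else is routine.
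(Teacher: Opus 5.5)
Your proposal is correct and is essentially the paper's proof: pass from the first to the second moment (the paper calls it Cauchy--Schwarz, you call it Jensen), compute $\mathbb{E}\norm{\mathcal{E}_p(S^{(n)})}_2^2 = m\sum_j \tr_n(p_j)^2$, and bound $\sum_j \tr_n(p_j)^2 \leq \frac{q+1}{n}\sum_j \tr_n(p_j) \leq \frac{1}{k} + \frac{1}{n}$. The only cosmetic difference is how the second moment is obtained. The paper uses the identity $\mathbb{E}\tr_n[p_j S_i^{(n)} p_j S_i^{(n)} p_j] = \tr_n(p_j)^2$, whereas you count dimensions of the block-diagonal subspace or sum entry variances. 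Your normalization (entry variance $1/n$, i.e.\ $\mathbb{E}\tr_n(S^2) = 1$) is exactly the one that identity relies on.
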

	
	\begin{proof}
		By the Cauchy-Schwarz inequality,
		\begin{align}
			\left( \mathbb{E} \norm*{\sum_{j=1}^k p_j S^{(n)} p_j}_2 \right)^2 &\leq \mathbb{E} \left[ \norm*{\sum_{j=1}^k p_j S^{(n)} p_j}_2^2 \right] \\
			&= \sum_{i=1}^m \sum_{j=1}^k \mathbb{E} \tr_n[p_j S_i^{(n)} p_j S_i^{(n)} p_j] = \sum_{i=1}^m \sum_{j=1}^k \tr_n(p_j)^2 \\
			&\leq m \frac{q+1}{n} \sum_{j=1}^k \tr_n(p_j) \leq m \left( \frac{1}{k} + \frac{1}{n} \right),
		\end{align}
		and taking square roots completes the proof.
	\end{proof}
	
	To aid in the concentration computation, we first note the following facts about $\psi_k^{(n)}$.
	
	\begin{observation}
		Since all elements in $\mathcal{P}_k^{(n)}$ are conjugate by a unitary, we have for each fixed $p \in \mathcal{P}_k^{(n)}$
		\begin{equation} \label{eq: expressing psi via unitary orbit}
			\psi_k^{(n)}(x) = \inf_{u \in \mathbb{U}_n} \norm{\mathcal{E}_{upu^*}(x)}_2 = \inf_{u \in \mathbb{U}_n} \norm{\mathcal{E}_{p}(u^*xu)}_2,
		\end{equation}
		where $\mathcal{U}_n$ is the $n \times n$ unitary group.  For $x, y \in (\mathbb{M}_n)_{\sa}^m$ and $p \in \mathcal{P}_k^{(n)}$,
		\begin{equation} \label{eq: psi Lipschitz estimate 1}
			\norm{\mathcal{E}_p(x) - \mathbb{E}_p(y)}_2 \leq \norm{x - y}_2.
		\end{equation}
		Hence for unitaries $u, v \in \mathbb{U}_n$ and $x \in (\mathbb{M}_n)_{\sa}^m$,
		\begin{equation} \label{eq: psi Lipschitz estimate 2}
			\norm{\mathcal{E}_p(uxu^*) - \mathbb{E}_{p}(vxv^*)}_2 \leq \norm{uxu^* - vxv^*}_2 \leq 2 \norm{u - v}_\infty \norm{x}_2.
		\end{equation}
		Moreover, \eqref{eq: psi Lipschitz estimate 1} implies that $\psi$ is the infimum of a family of $1$-Lipschitz functions, so
		\begin{equation} \label{eq: psi Lipschitz estimate 3}
			|\psi_k^{(n)}(x) - \psi_k^{(n)}(y)| \leq \norm{x - y}_2.
		\end{equation}
	\end{observation}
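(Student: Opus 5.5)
The plan is to reduce everything to one structural fact: $\mathcal{P}_k^{(n)}$ is a single orbit of the conjugation action of $\mathbb{U}_n$. Once that is established, each assertion of the observation follows from unitary invariance of $\norm{\cdot}_2$ and contractivity of $\mathcal{E}_p$.

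\emph{Step 1 (orbit parametrization).} Fix $p\in\mathcal{P}_k^{(n)}$ and let $p'\in\mathcal{P}_k^{(n)}$ be arbitrary.
\begin{itemize}
\item Since $\sum_j p_j=1$ and the $p_j$ are projections, they are mutually orthogonal, so $\C^n=\bigoplus_j p_j\C^n$.
\item The same holds for $p'$, and $\operatorname{rank}p_j=\operatorname{rank}p'_j$ for every $j$, because the traces are prescribed in Notation \ref{not: partition test}.
\item Choose orthonormal bases adapted to the two decompositions. The unitary $u$ carrying one basis to the other satisfies $up_ju^*=p'_j$ for all $j$.
\end{itemize}
Conversely, $upu^*\in\mathcal{P}_k^{(n)}$ for every $u\in\mathbb{U}_n$. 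Hence $\mathcal{P}_k^{(n)}=\{upu^*:u\in\mathbb{U}_n\}$. Consequently, any extremum of $p'\mapsto\norm{\mathcal{E}_{p'}(x)}_2$ over $\mathcal{P}_k^{(n)}$ equals the same extremum of $u\mapsto\norm{\mathcal{E}_{upu^*}(x)}_2$ over $\mathbb{U}_n$. This gives the first expression in \eqref{eq: expressing psi via unitary orbit}.

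\emph{Step 2 (the second form).} Compute directly:
\[
\mathcal{E}_{upu^*}(x)=\sum_j up_ju^*xup_ju^*=u\,\mathcal{E}_p(u^*xu)\,u^*,
\]
applied coordinatewise. Since $\norm{uyu^*}_2=\norm{y}_2$ by traciality, we get $\norm{\mathcal{E}_{upu^*}(x)}_2=\norm{\mathcal{E}_p(u^*xu)}_2$ for every $u$. Taking the infimum over $u$ gives the second equality in \eqref{eq: expressing psi via unitary orbit}.

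\emph{Step 3 (Lipschitz estimates).} For \eqref{eq: psi Lipschitz estimate 1}:
\begin{itemize}
\item $\mathcal{E}_p$ is linear.
\item Its blocks $p_jzp_j$ are mutually orthogonal in $L^2$, so $\norm{\mathcal{E}_p(z)}_2^2=\sum_j\norm{p_jzp_j}_2^2$.
\item This is at most $\sum_{j,l}\norm{p_jzp_l}_2^2=\norm{z}_2^2$.
\end{itemize}
So $\mathcal{E}_p$ is the orthogonal projection onto the block-diagonal part, hence a contraction. Apply this with $z=x-y$.

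For \eqref{eq: psi Lipschitz estimate 2}, first apply \eqref{eq: psi Lipschitz estimate 1}. Then write
\[
uxu^*-vxv^*=(u-v)xu^*+vx(u-v)^*,
\]
and use the Hölder bounds $\norm{ab}_2\le\norm{a}_\infty\norm{b}_2$ and $\norm{ba}_2\le\norm{b}_2\norm{a}_\infty$ in each coordinate.

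\emph{Step 4 (1-Lipschitz property).} By Step 1, $\psi_k^{(n)}$ as written in \eqref{eq: expressing psi via unitary orbit} is the pointwise infimum of the family $f_u(x)=\norm{\mathcal{E}_{upu^*}(x)}_2$. By \eqref{eq: psi Lipschitz estimate 1}, each $f_u$ is $1$-Lipschitz, and each is bounded below by $0$, so the infimum is finite. For any $u$,
\[
\psi_k^{(n)}(x)\le f_u(x)\le f_u(y)+\norm{x-y}_2 .
\]
Taking the infimum over $u$ gives $\psi_k^{(n)}(x)\le\psi_k^{(n)}(y)+\norm{x-y}_2$. Exchanging $x$ and $y$ yields \eqref{eq: psi Lipschitz estimate 3}.

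The same argument works verbatim for the supremum appearing in \eqref{eq: def of psi}, since a supremum of $1$-Lipschitz functions is also $1$-Lipschitz. So the conclusion does not depend on which extremum is used.

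The only step with real content is the orbit parametrization in Step 1, where one must check that equal ranks give a simultaneous conjugating unitary. Everything after that is bookkeeping with unitary invariance and contractivity.
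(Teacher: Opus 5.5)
Your proposal is correct and supplies exactly the justifications the paper leaves implicit: all elements of $\mathcal{P}_k^{(n)}$ are unitarily conjugate, $\mathcal{E}_p$ is a $\norm{\cdot}_2$-contraction, $uxu^*-vxv^*=(u-v)xu^*+vx(u-v)^*$, and an extremum of $1$-Lipschitz functions is $1$-Lipschitz. Your remark about which extremum is used is the right way to handle the discrepancy: since \eqref{eq: def of psi} defines $\psi_k^{(n)}$ as a supremum, the $\inf$ in \eqref{eq: expressing psi via unitary orbit} should read $\sup$ (the literal $\inf$ identity fails in general), and the supremum version is what Lemma \ref{lem: epsilon net} actually uses.
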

	
	\begin{lemma} \label{lem: epsilon net}
		For $m \geq 2$, we have
		\begin{equation} \label{eq: final expectation estimate}
			\mathbb{E} \psi(S^{(n)}) \leq \sqrt{m} \left( \frac{1}{k} + \frac{1}{n} \right)^{1/2} + 1 + \sqrt{2 + \log m}
		\end{equation}
	\end{lemma}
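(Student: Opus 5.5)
The plan is to combine the fixed-$p$ bound \eqref{eq: upper bound on expectation} with Gaussian concentration and a union bound over a finite net of the unitary orbit parametrizing $\mathcal{P}_k^{(n)}$. (Here $\psi_k^{(n)}$ is the supremum in \eqref{eq: def of psi}; the ``$\inf$'' in \eqref{eq: expressing psi via unitary orbit} should read ``$\sup$''.) Fix a reference $p \in \mathcal{P}_k^{(n)}$ and set $F_u(x) = \norm{\mathcal{E}_p(u^*xu)}_2$. Then $\psi_k^{(n)}(x) = \sup_{u \in \mathbb{U}_n} F_u(x)$. Each $F_u$ is $1$-Lipschitz for $\norm{\cdot}_2$ by \eqref{eq: psi Lipschitz estimate 1}, and $\mathbb{E} F_u(S^{(n)}) \leq \sqrt{m}(1/k + 1/n)^{1/2}$ by \eqref{eq: upper bound on expectation}, since $upu^* \in \mathcal{P}_k^{(n)}$.

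First, I record the concentration input. By Notation \ref{not: GUE tuple}, $S^{(n)}$ is $1/n$ times a standard Gaussian vector in $((\mathbb{M}_n)_{\sa}^m, \ip{\cdot,\cdot}_{\tr_n})$; indeed $\mathbb{E}\norm{S^{(n)}}_2^2 = m$ over $mn^2$ real dimensions. The Gaussian concentration inequality then gives, for every $1$-Lipschitz $F$, that $F(S^{(n)}) - \mathbb{E}F(S^{(n)})$ is sub-Gaussian with variance proxy $1/n^2$:
\[
\mathbb{E}\, e^{\lambda (F(S^{(n)}) - \mathbb{E} F(S^{(n)}))} \leq e^{\lambda^2/(2n^2)}.
\]
Hence, for any finite family $\{u_1,\dots,u_N\}$, the standard maximal inequality (Jensen applied to $\log \sum_j e^{\lambda(\cdot)}$, then optimizing over $\lambda$) gives
\[
\mathbb{E} \max_{j \leq N} F_{u_j}(S^{(n)}) \leq \sqrt{m}\left(\frac1k+\frac1n\right)^{1/2} + \frac{\sqrt{2 \log N}}{n}.
\]

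Second, I discretize. Let $\{u_j\}$ be an $\varepsilon$-net of $\mathbb{U}_n$ in operator norm. By \eqref{eq: psi Lipschitz estimate 2},
\[
\psi_k^{(n)}(x) \leq \max_j F_{u_j}(x) + 2\varepsilon \norm{x}_2,
\]
and $\mathbb{E}\norm{S^{(n)}}_2 \leq \sqrt{m}$. For the net I will use a covering bound $N \leq (C_0/\varepsilon)^{n^2}$, which reflects that $\mathbb{U}_n$ has real dimension $n^2$ (Szarek-type estimates, or a volumetric argument via the exponential map on a neighborhood together with compactness). Then $\sqrt{2\log N}/n \leq \sqrt{2\log(C_0/\varepsilon)}$. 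Choosing $\varepsilon = 1/(2\sqrt{m})$ makes the discretization error at most $1$, and the net term becomes $\sqrt{\log m + 2\log(2C_0)}$. This yields \eqref{eq: final expectation estimate} provided $2\log(2C_0) \leq 2$, i.e.\ $C_0 \leq e/2$.

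The main obstacle is this final constant: obtaining a net of $\mathbb{U}_n$ of cardinality at most $(C_0/\varepsilon)^{n^2}$ with $C_0$ small enough, in the regime $\varepsilon \leq 1/(2\sqrt 2)$ (this is where $m \geq 2$ enters). If the available covering constant is worse, I would first try three remedies. One is to balance the choice of $\varepsilon$ differently, making the discretization term slightly smaller than $1$ to absorb the constant. Another is to net only the smaller set $\mathbb{U}_n / (\mathbb{U}_{q+1}^r \times \mathbb{U}_q^{k-r})$, which actually parametrizes $\mathcal{P}_k^{(n)}$ and has dimension strictly less than $n^2$. A third is to replace the operator-norm net by a net of the projections themselves, using $\norm{\mathcal{E}_p(x) - \mathcal{E}_{p'}(x)}_2 \leq 2\max_j \norm{p_j - p_j'}_\infty \norm{x}_2$ in an analogous way. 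The remaining steps are routine.
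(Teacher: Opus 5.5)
Your skeleton matches the paper's, and your bookkeeping is correct, including the correction $\inf\to\sup$ and the maximal inequality $\sqrt{2\log N}/n$. The skeleton is: the fixed-$p$ bound \eqref{eq: upper bound on expectation}, sub-Gaussian concentration with variance proxy $1/n^2$, a union bound over a net of $\mathbb{U}_n$, and the discretization error $2\varepsilon\norm{x}_2$.

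The gap is the step you flag yourself, and it cannot be filled. An operator-norm $\varepsilon$-net $\{u_j\}\subset\mathbb{U}_n$ with at most $(C_0/\varepsilon)^{n^2}$ points and $C_0\le e/2$ does not exist for large $n$. By invariance of Haar measure $\mu$, $N\ge 1/\mu(\{u:\norm{u-1}_\infty\le\varepsilon\})$. This set consists of the unitaries whose eigenangles lie in $[-\theta_\varepsilon,\theta_\varepsilon]$, where $\theta_\varepsilon=2\arcsin(\varepsilon/2)$. Combine three facts:
\begin{itemize}
\item the Weyl integration formula;
\item the inequality $|e^{i\alpha}-e^{i\beta}|\le|\alpha-\beta|$;
\item Fekete's theorem, with the fact that an interval of length $2\theta_\varepsilon$ has logarithmic capacity $\theta_\varepsilon/2$.
\end{itemize}
Together they give $\mu(\{\norm{u-1}_\infty\le\varepsilon\})\le(\arcsin(\varepsilon/2))^{n^2-o(n^2)}$. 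Hence $N\ge(1/\arcsin(\varepsilon/2))^{n^2-o(n^2)}$, essentially $(2/\varepsilon)^{n^2}$. In your range $\varepsilon\le 1/(2\sqrt2)$ this exceeds $(e/(2\varepsilon))^{n^2}$.

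Your three remedies are only sketched, and none evidently recovers the displayed constant.
\begin{itemize}
\item Rebalancing $\varepsilon$ absorbs a larger $C_0$ only once $m$ is large in terms of $C_0$.
\item Netting the flag manifold $\mathbb{U}_n/\prod_j\mathbb{U}_{n_j}$ improves the exponent only by the factor $1-\sum_j n_j^2/n^2\approx 1-1/k$. This is negligible for $k=m+1$, the case used later, and it still needs an explicit covering constant.
\item Netting $k$-tuples of projections through a product of Grassmannians involves real dimension $2n^2-2\sum_j n_j^2$, roughly twice as large.
\end{itemize}

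The paper avoids sharp covering theory. It bounds a separated subset of $\mathbb{U}_n\subset\mathbb{M}_n\cong\R^{2n^2}$ volumetrically, $|\mathcal{Z}|\le((2+\delta)/\delta)^{2n^2}$. With this net, your correct maximal inequality gives $2\sqrt{\log\frac{2+\delta}{\delta}}$. The paper reaches $1+\sqrt{2+\log m}$ only through two slips.
\begin{itemize}
\item After Jensen the bound should read $\frac{\lambda}{2}+\frac{2}{\lambda}\log\frac{2+\delta}{\delta}$; the factor $2$ coming from the exponent $2n^2$ is dropped.
\item With $\delta=m^{-1/2}$ the discretization term $2\delta\sqrt m$ equals $2$, not $1$.
\end{itemize}
Carried out correctly, the paper's route yields $\sqrt m(1/k+1/n)^{1/2}+2+\sqrt{4+2\log m}$.

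A cleaner way to finish your own argument is as follows. Take a maximal $\varepsilon$-separated subset of the operator-norm ball of radius $\pi$ in $(\mathbb{M}_n)_{\sa}$. This space has real dimension $n^2$, so the subset has at most $(1+2\pi/\varepsilon)^{n^2}$ points. Push it forward under $h\mapsto e^{ih}$, which maps onto $\mathbb{U}_n$ and is $1$-Lipschitz for $\norm{\cdot}_\infty$. With $\varepsilon=1/(2\sqrt m)$ this gives $\sqrt m(1/k+1/n)^{1/2}+1+\sqrt{6+\log m}$.

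Either way, what one actually proves is the lemma with a somewhat larger absolute constant than the one displayed. Only an $O(\sqrt{\log m})$ term is used later, so this costs only a larger numerical threshold on $m$ in the main theorems.
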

	
	\begin{proof}
		Fix $p \in \mathcal{P}_k^{(n)}$.  Let $\delta > 0$.  Let $\mathcal{Z}$ be a maximal subset $\mathbb{U}_n$ that are $\delta/2$-separated with respect to $\norm{\cdot}_\infty$, that is, $\norm{u - v}_\infty \geq \delta / 2$ for $u \neq v$ in $\mathcal{Z}$.  Since the $\delta / 2$-balls in $\mathbb{M}_n$ centered at $u \in \mathcal{Z}$ are disjoint and contained in the ball of radius $1 + \delta / 2$ centered at the origin, and since the Lebesgue measure of a $\norm{\cdot}_\infty$-ball in $\mathbb{M}_n$ scales by the radius to the power $2n^2$, we have $|\mathcal{Z}| \leq ((2 + \delta) / \delta)^{2n^2}$.  Moreover, the $\norm{\cdot}_\infty$-balls of radius $\delta$ centered at $z \in \mathcal{Z}$ cover $\mathbb{U}_n$.  Let
		\begin{equation}
		\psi_{\mathcal{Z}}(x) = \max_{u \in \mathcal{Z}} \norm{\mathcal{E}_p(uxu^*)}_2
		\end{equation}
		Using \eqref{eq: expressing psi via unitary orbit}, \eqref{eq: psi Lipschitz estimate 2}, and the fact $\mathcal{Z}$ is a $\delta$-covering of $\mathbb{U}_n$,
		\begin{equation}
		\psi_k^{(n)}(S^{(n)}) \leq \psi_{\mathcal{Z}}(S^{(n)}) + 2 \delta \norm{S^{(n)}}_2.
		\end{equation}
		Moreover, $\mathbb{E} \norm{S^{(n)}}_2 \leq \mathbb{E}[\norm{S^{(n)}}_2^2]^{1/2} = \sqrt{m}$, and hence
		\begin{equation} \label{eq: delta net approximation}
			\mathbb{E} \psi_k^{(n)}(S^{(n)}) \leq \mathbb{E} \psi_{\mathcal{Z}}(S^{(n)}) + 2 \delta \sqrt{m}.
		\end{equation}
		It remains to estimate $\mathbb{E} \psi_{\mathcal{Z}}(S^{(n)})$, for which we will use an exponential moment bound.  We recall the \emph{Herbst concentration inequality} (see \cite[Lemmas 2.3.2 and 2.3.3]{AGZ2009}).  With the normalization of $\norm{\cdot}_2$ and $\gamma_m^{(n)}$, the log-Sobolev constant of $S^{(n)}$ is $1/n^2$, so this inequality says that for Lipschitz $f: (\mathbb{M}_n)_{\sa}^m \to \R$ and $\lambda \in \R$,
		\begin{equation} \label{eq: Herbst exponential moment}
			\mathbb{E} \exp(\lambda [f(S^{(n)}) - \mathbb{E} f(S^{(n)})]) \leq \exp(\lambda^2 n^{-2} \norm{f}_{\Lip}^2 / 2)
		\end{equation}
		Now substitute $-n^2 \lambda$ for $\lambda$, take $f(S^{(n)}) = \norm{\mathcal{E}_p(uS^{(n)}u^*)}_2$, note $f$ is $1$-Lipschitz by \eqref{eq: psi Lipschitz estimate 1}, and bound $\mathbb{E} f(S^{(n)})$ by \eqref{eq: upper bound on expectation} to obtain
		\begin{equation}
		\mathbb{E} \exp \left( -n^2 \lambda \left( \norm{\mathcal{E}_p(uS^{(n)}u)}_2 - \sqrt{m}\left(\frac{1}{k} + \frac{1}{n} \right)^{1/2} \right) \right) \leq \exp(\lambda^2 n^2 / 2).
		\end{equation}
		Therefore, we have for $\lambda > 0$,
		\begin{align}
			\mathbb{E} \exp \left( n^2 \lambda \left( \psi_{\mathcal{Z}}(S^{(n)}) - \sqrt{m}\left(\frac{1}{k} + \frac{1}{n} \right)^{1/2} \right) \right) &= \mathbb{E} \max_{u \in \mathcal{Z}} \exp \left( -n^2 \lambda \left( \norm{\mathcal{E}_p(uS^{(n)}u)}_2 - \sqrt{m}\left(\frac{1}{k} + \frac{1}{n} \right)^{1/2} \right) \right) \\
			&\leq \sum_{u \in \mathcal{Z}} \mathbb{E} \exp \left( -n^2 \lambda \left( \norm{\mathcal{E}_p(uS^{(n)}u)}_2 - \sqrt{m}\left(\frac{1}{k} + \frac{1}{n} \right)^{1/2} \right) \right) \\
			&\leq \left(\frac{2 + \delta}{\delta} \right)^{2n^2} \exp(\lambda^2 n^2 / 2).
		\end{align}
		Applying Jensen's inequality,
		\begin{equation}
		\exp \left( n^2 \lambda \left( \mathbb{E}  \psi_{\mathcal{Z}}(S^{(n)}) - \sqrt{m}\left(\frac{1}{k} + \frac{1}{n} \right) \right) \right) \leq \left(\frac{2 + \delta}{\delta} \right)^{2n^2} \exp(\lambda^2 n^2 / 2),
		\end{equation}
		and hence
		\begin{equation}
		\mathbb{E} \psi_{\mathcal{Z}}(S^{(n)}) \leq \sqrt{m}\left(\frac{1}{k} + \frac{1}{n} \right)^{1/2} + \frac{\lambda}{2} + \lambda^{-1} \log \frac{2 + \delta}{\delta}.
		\end{equation}
		Plugging in the optimal choice $\lambda = \sqrt{2 \log \frac{2 + \delta}{\delta}}$ and applying \eqref{eq: delta net approximation},
		\begin{equation} \label{eq: penultimate expectation estimate}
			\mathbb{E} \psi(S^{(n)}) \leq \sqrt{m}\left(\frac{1}{k} + \frac{1}{n} \right)^{1/2} + 2 \delta \sqrt{m} + \sqrt{2 \log \frac{2 + \delta}{\delta}}.
		\end{equation}
		In particular, taking $\delta = m^{-1/2}$ and noting $2 + m^{-1/2} \leq 2 + 2^{-1/2} \leq e$, we have
		\begin{equation}
		\delta \sqrt{m} + \sqrt{2 \log \frac{2 + \delta}{\delta}} \leq 1 + \sqrt{2 \log \frac{e}{m^{-1/2}}} = 1 + \sqrt{2 + \log m}.
		\end{equation}
		Substituting this into \eqref{eq: penultimate expectation estimate} yields \eqref{eq: final expectation estimate}.
	\end{proof}
	
	\subsection{Lower bound for main example}
	
	Assumption \ref{ass: main example setup} guarantees that $X_{m+1}$ commutes with $X_1$, \dots, $X_m$ (and also with itself) and therefore any spectral projections $p_j$ of $X_{m+1}$ will satisfy $\sum_{j=1}^k p_j X p_j = X$.  However, we have to consider random matrix models for the perturbed version $X(t_0) \sim Y(0)$, whose von Neumann algebra does not \emph{a priori} contain the spectral projections of $p_j$.  To deal with the perturbation (as well as the matrix approximation in the large-$n$ limit), it is convenient to replace the projection with something that depends continuously on the input operator, i.e.\ replace the indicator function corresponding to each spectral projection with a continuous cutoff function.  This natural approximation was carried out by OpenAI at the beginning of \cite[\S 6]{OpenAI2026entropy}.  The next lemma gives an explicit control of the $L^1$ error when the measure $\mu$ has bounded density.
	
	\begin{observation} \label{obs: quantiles}
		Let $\mu$ be a probability measure on $[-R,R]$ with bounded density $\rho \leq C$.  Let $\lambda_0 = -R$, $\lambda_1$, \dots, $\lambda_k = R$ be a sequence of $k$-quantiles for $\mu$, that is,
		\begin{equation}
			\mu([-R,\lambda_j]) = \frac{j}{k} \text{ for } j = 0, \dots, k.
		\end{equation}
		Observe that
		\begin{equation}
			\lambda_j - \lambda_{j-1} \geq \frac{1}{C} \mu([\lambda_{j-1},\lambda_j]) = \frac{1}{Ck}.
		\end{equation}
		Fix $L \geq 1$, and let $f_j: [-R,R] \to [0,1]$ be given by
		\begin{equation} \label{eq: intervals and functions}
			f_{L,j}(t) = \min(1, L d(t,\R \setminus I_j)), \qquad \text{ where } I_j =  \begin{cases}
				(-\infty,\lambda_1], & j = 1, \\
				(\lambda_{j-1},\lambda_j], & j = 2, \dots, k-1 \\
				(\lambda_{k-1},\infty), & j = k,
			\end{cases}
		\end{equation}
		which is $L$-Lipschitz.  Thus, $f_j$ is supported in $[\lambda_{j-1},\lambda_j]$, and one moves inward from the edge of the interval, $f$ increases with slope $L$, but cuts off if it reaches $1$.  Hence, for $j = 2, \dots, k-1$,
		\begin{equation}
		\int_{[\lambda_{j-1},\lambda_j]} f_{L,j}^2 \,d\mu \geq \mu([\lambda_{j-1},\lambda_j]) - \mu([\lambda_{j-1},\lambda_{j-1}+1/L]) - \mu([\lambda_j-1/L,\lambda_j]) \geq \frac{1}{k} - \frac{2C}{L},
		\end{equation}
		so that
		\begin{equation} \label{eq: function integral deficit}
			\frac{1}{k} - \frac{2C}{L} \leq \int_{[\lambda_{j-1},\lambda_j]} f_{L,j}^2\,d\mu \leq  \int_{[\lambda_{j-1},\lambda_j]} f_{L,j}\,d\mu \leq \frac{1}{k}.
		\end{equation}
		The functions $f_{L,j}$ also have disjoint support.
	\end{observation}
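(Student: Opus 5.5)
The plan is to verify each assertion of Observation \ref{obs: quantiles} directly from the definitions; only elementary measure theory is needed.

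First, the quantiles exist. Since $\mu$ has a density, $s \mapsto \mu([-R,s])$ is continuous and nondecreasing from $0$ to $1$. By the intermediate value theorem, there are points $\lambda_j$ with $\mu([-R,\lambda_j]) = j/k$, chosen nondecreasing in $j$. The spacing bound comes from the density bound: $\frac{1}{k} = \mu([\lambda_{j-1},\lambda_j]) = \int_{\lambda_{j-1}}^{\lambda_j} \rho \leq C(\lambda_j - \lambda_{j-1})$. In particular the $\lambda_j$ are strictly increasing, so the sets $I_j$ are nonempty, pairwise disjoint intervals.

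Next, the properties of $f_{L,j}$.
\begin{itemize}
\item \emph{Lipschitz.} The distance function $t \mapsto d(t,\R\setminus I_j)$ is $1$-Lipschitz. Multiplying by $L$ and truncating at $1$ via $\min(1,\cdot)$ keeps it $L$-Lipschitz.
\item \emph{Range and support.} The function takes values in $[0,1]$ and vanishes off $I_j$, so its support lies in $\overline{I_j} \cap [-R,R] = [\lambda_{j-1},\lambda_j]$.
\item \emph{Disjoint supports.} The open sets $\{f_{L,j} > 0\}$ are the interiors of the disjoint $I_j$, so they are disjoint. The closed supports can share only the single points $\lambda_j$, which are $\mu$-null. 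This is all that is needed later, since we use $f_{L,j}f_{L,j'} = 0$ pointwise for $j \neq j'$.
\end{itemize}

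For the integral bounds \eqref{eq: function integral deficit}, the upper inequalities are straightforward. Since $0 \leq f_{L,j} \leq 1$, we have $f_{L,j}^2 \leq f_{L,j} \leq \mathbf 1_{[\lambda_{j-1},\lambda_j]}$, and the last function integrates to $1/k$.

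For the lower bound with $2 \leq j \leq k-1$, observe that $f_{L,j}(t) = 1$ whenever $d(t, \R \setminus I_j) \geq 1/L$, that is, for $t \in [\lambda_{j-1}+1/L, \lambda_j - 1/L]$. Hence $f_{L,j}^2$ is at least the indicator of $[\lambda_{j-1},\lambda_j]$ minus the indicators of the two edge strips $[\lambda_{j-1},\lambda_{j-1}+1/L]$ and $[\lambda_j-1/L,\lambda_j]$. This remains valid if the strips overlap, because the right-hand side is then at most $0$. Each strip has $\mu$-measure at most $C/L$ by the density bound, which gives $\frac1k - \frac{2C}{L}$.

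For the endpoint cases $j=1$ and $j=k$ the same argument applies with only one interior edge. The outer edge lies outside $\operatorname{supp}\mu \subseteq [-R,R]$, where $I_j$ extends to $\mp\infty$, so $f_{L,j} = 1$ near $\mp R$. This gives the even stronger bound $\frac1k - \frac CL$.

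I expect no real obstacle. The only points needing care are the boundary bookkeeping: that the quantiles are strictly increasing, that the edge strips may overlap, and the conventions for $j=1$ and $j=k$. These are handled as above.
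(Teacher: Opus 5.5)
Your proof is correct and follows essentially the same route as the paper: the spacing bound from $\rho \leq C$, the truncated distance function for the $L$-Lipschitz property, and subtracting the two edge strips of width $1/L$, each of $\mu$-measure at most $C/L$. Your extra bookkeeping fills in details the paper glosses over: the strict monotonicity of the $\lambda_j$, the fact that the closed supports share only endpoints where both functions vanish, and the stronger bound $\frac1k - \frac CL$ for $j=1,k$ (which the paper leaves implicit but relies on when summing over all $j$).
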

	
	The next lemma allows us to build projections with specified traces as close as possible to given smoothed-out versions.  We remark that it is well known that any operator that satisfies the relations of a projection approximately must be close to a projection (and similarly for matrix units), which has had many applications in operator algebras.
	
	\begin{lemma} \label{lem: building approximate projections}
		Let $\lambda_j$ and $f_{L,j}$ be as in the previous observation.  Let $y \in (\mathbb{M}_n)_{\sa}$.  Write $n = qk + r$ by integer division.  Then there exists projections $(p_j^{\star})_{j=1}^k$ satisfying
		\begin{equation} \label{eq: p star properties}
			\sum_{j=1}^k p_j^{\star} = 1, \qquad \tr_n(p_j^{\star}) = \frac{q + \mathbbm{1}_{j \leq r}}{n}
		\end{equation}
		and
		\begin{equation} \label{eq: p star error}
			\sum_{j=1}^k \norm{p_j^{\star} - f_{L,j}(y)}_1 \leq 3 \sum_{j=1}^k \left| \frac{1}{k} - \tr_n(f_{L,j}(y)) \right| + \frac{k}{n}.
		\end{equation}
	\end{lemma}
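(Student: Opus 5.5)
The plan is to build the $p_j^{\star}$ as spectral-type projections of $y$, diagonal in an eigenbasis of $y$, and then correct their ranks by moving a few eigenvectors between blocks. Diagonalize $y = \sum_{i=1}^n \lambda_i e_ie_i^*$ with $(e_i)$ an orthonormal eigenbasis. Every $f_{L,j}(y) = \sum_i f_{L,j}(\lambda_i) e_ie_i^*$ is then diagonal in this basis, with entries in $[0,1]$.

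\emph{Step 1: the uncorrected projections.} Since the intervals $I_1,\dots,I_k$ of Observation \ref{obs: quantiles} partition $\R$, the sets $A_j = \{i : \lambda_i \in I_j\}$ partition $[n]$. Because $f_{L,j}$ vanishes off $I_j$ and is bounded by $1$, we have $0 \leq f_{L,j}(y) \leq \mathbbm{1}_{I_j}(y) = \sum_{i \in A_j} e_ie_i^*$. Set $a_j = |A_j|/n$ and $c_j = \tr_n(f_{L,j}(y))$. Positivity of $\mathbbm{1}_{I_j}(y) - f_{L,j}(y)$ gives
\begin{equation}
\norm{\mathbbm{1}_{I_j}(y) - f_{L,j}(y)}_1 = a_j - c_j \geq 0 .
\end{equation}

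\emph{Step 2: rank correction.} Let $n_j = q + \mathbbm{1}_{j \leq r}$ be the target sizes; note $\sum_j n_j = n = \sum_j |A_j|$.
\begin{itemize}
\item For each $j$ with $|A_j| \geq n_j$, keep any $n_j$ indices of $A_j$ and release the others into a common pool.
\item The pool then has exactly $\sum_j (n_j - |A_j|)_+$ elements. Hand them out to fill the deficient $A_j$'s.
\end{itemize}
This produces a partition $(B_j)$ of $[n]$ with $|B_j| = n_j$ and $|A_j \triangle B_j| = \bigl||A_j| - n_j\bigr|$. Define $p_j^{\star} = \sum_{i \in B_j} e_ie_i^*$. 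These projections satisfy \eqref{eq: p star properties}, and
\begin{equation}
\norm{p_j^{\star} - \mathbbm{1}_{I_j}(y)}_1 = \bigl|a_j - \tfrac{n_j}{n}\bigr| .
\end{equation}

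\emph{Step 3: bookkeeping.} Write $\Sigma = \sum_j |1/k - c_j|$. By the triangle inequality,
\begin{equation}
\sum_j \norm{p_j^\star - f_{L,j}(y)}_1 \leq \sum_j (a_j - c_j) + \sum_j \bigl|a_j - \tfrac{n_j}{n}\bigr| .
\end{equation}
I bound the two sums separately.
\begin{itemize}
\item Since $\sum_j a_j = 1 = \sum_j 1/k$, the first sum equals $\sum_j (1/k - c_j)$, which is at most $\Sigma$.
\item Write $a_j - 1/k = (a_j - c_j) + (c_j - 1/k)$, where $a_j - c_j \geq 0$. This gives $\sum_j |a_j - 1/k| \leq \sum_j (a_j - c_j) + \Sigma \leq 2\Sigma$.
\item Since $|k n_j - n| \leq k$, we have $|1/k - n_j/n| \leq 1/n$ for each $j$. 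Summing over $j$ gives at most $k/n$.
\end{itemize}
Adding these gives $\Sigma + 2\Sigma + k/n$, which is exactly \eqref{eq: p star error}.

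The only delicate point is this last accounting step. In particular, one must use $f_{L,j}(y) \leq \mathbbm{1}_{I_j}(y)$ to get the one-signed quantity $a_j - c_j$, and the normalization $\sum_j a_j = 1$ to control it. The construction itself is elementary because everything commutes with $y$.
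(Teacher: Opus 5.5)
Your proof is correct and follows the same route as the paper. It starts from the spectral projections $\mathbbm{1}_{I_j}(y)$ in the diagonal algebra of $y$, trims the oversized ones and hands the surplus eigenvectors to the undersized ones to get exact ranks, then uses the triangle inequality through $\mathbbm{1}_{I_j}(y)$ together with $0 \le f_{L,j}(y) \le \mathbbm{1}_{I_j}(y)$. Your accounting via the exact identity $\sum_j \norm{p_j^\star - \mathbbm{1}_{I_j}(y)}_1 = \sum_j |a_j - n_j/n| \le 2\Sigma + k/n$ is sharper than the paper's, which sums a per-index bound over $J$ and doubles it, and read literally gives only about $4\Sigma + 2k/n$ for that term; so your bookkeeping is what actually delivers the stated constant $3\Sigma + k/n$.
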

	
	\begin{proof}
		For the first claim, recall $y$ is contained in some maximal abelian subalgebra $A$, which is conjugate to the diagonal subalgebra.   Let $I_j$ be as in \eqref{eq: intervals and functions}, and consider the projections
		\begin{equation}
		p_j = \mathbbm{1}_{I_j}(y).
		\end{equation}
		Let $p_j'$ be a projection in $A$ such that
		\begin{equation}
		\tr_n(p_j') = \min\left( \frac{q+\mathbbm{1}_{j\leq r}}{n}, \tr_n(p_j) \right),
		\end{equation}
		and let $J$ be the set of indices where $\tr_n(p_j) > (q+\mathbbm{1}_{j \leq r})/n$.  Let $p_0' = 1 - \sum_{j=1}^k p_j'$.  We can then redistribute the mass of $p_0'$ to the projections $p_j'$ for $j \in [k] \setminus J$ whose trace is too small.  Namely, there exists projections $p_j'' \in A$ satisfying
		\begin{equation}
		p_j'' = 0 \text{ for } j \in J, \qquad \sum_{j=1}^k p_j'' = p_0', \qquad \tr_n(p_j'' + p_j') = \frac{q + \mathbbm{1}_{j\leq r}}{n}.
		\end{equation}
		Let $p_j^{\star} = p_j' + p_j''$.  Note that
		\begin{equation}
		\sum_{j=1}^k \norm{p_j^{\star} - p_j}_1 \leq \sum_{j=1}^k \tr_n(p_j^{\star} - p_j') + \sum_{j=1}^k \tr_n(p_j - p_j') = 2 \sum_{j=1}^k \tr_n(p_j - p_j'),
		\end{equation}
		since $\sum_j \tr_n(p_j) = 1 = \sum_j \tr_n(p_j^{\star})$.  By construction, for $j \in J$,
		\begin{equation} \label{eq: estimate on difference of projections}
		\tr_n(p_j - p_j') = \tr_n(p_j) - \frac{q+\mathbbm{1}_{j\leq r}}{n} \leq \tr_n(p_j) - \frac{1}{k} + \frac{1}{n} \leq \tr_n(p_j - f_{L,j}(y)) + \left|\tr_n(f_{L,j}(y)) - \frac{1}{k} \right| + \frac{1}{n},
		\end{equation}
		where we note that $p_j - f_{L,j}(y) = \mathbbm{1}_{I_j}(y) - f_{L,j}(y)$.  For $j \not \in J$, the left-hand side of \eqref{eq: estimate on difference of projections} is zero.  Now
		\begin{equation}
		\sum_{j=1}^k \tr_n(p_j - f_{L,j}(y)) = 1 - \sum_{j=1}^k \tr_n(f_{L,j}(y)) \leq \sum_{j=1}^k \left| \frac{1}{k} - \tr_n(f_{L,j}(y)) \right|.
		\end{equation}
		Thus,
		\begin{equation}
		\sum_{j=1}^k \norm{p_j^{\star} - f_{L,j}(y)}_1 \leq \sum_{j=1}^k \norm{p_j - f_{L,j}(y)}_1 + \sum_{j=1}^k \norm{p_j - p_j^{\star}}_1 \leq 3 \sum_{j=1}^k \left| \frac{1}{k} - \tr_n(f_{L,j}(y)) \right| + \frac{k}{n}.
		\end{equation}
		This finishes the proof of the first claim, including \eqref{eq: p star properties} and \eqref{eq: p star error}.
	\end{proof}
	
	Next, we need to estimate the error in the paving function resulting from replacing $p_j^{\star}$ with $f_{L,j}(y)$, which will use the following general computation.
	
	\begin{lemma}
		For a family of positive operators $a = (a_1,\dots,a_k)$ in a tracial von Neumann algebra with $\sum_{j=1}^k a_j \leq 1$, define $\mathcal{E}_a(x) = \sum_{j=1}^k a_j x a_j$.  Then for any two such families $a$ and $b$, and $x \in M^m$,
		\begin{equation} \label{eq: compression error}
			\norm{\mathcal{E}_a(x) - \mathcal{E}_b(x)}_2 \leq 2 \sqrt{m} \left( \sum_{j=1}^k \norm{a_j - b_j}_1 \right)^{1/2} \norm{x}_\infty.
		\end{equation}
	\end{lemma}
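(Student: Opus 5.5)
We would prove the estimate \eqref{eq: compression error} by telescoping one coordinate at a time. Fix $i \in [m]$ and write $x_i$ for the $i$th entry. For each $j$,
\begin{equation*}
a_j x_i a_j - b_j x_i b_j = (a_j - b_j) x_i a_j + b_j x_i (a_j - b_j).
\end{equation*}
The task is then to bound the $2$-norm of $\sum_j (a_j - b_j) x_i a_j$, and symmetrically of $\sum_j b_j x_i (a_j - b_j)$. Each should be at most $(\sum_j \norm{a_j - b_j}_1)^{1/2} \norm{x_i}_\infty$. Given this, the triangle inequality yields a bound of $2 (\sum_j \norm{a_j-b_j}_1)^{1/2}\norm{x_i}_\infty$ for coordinate $i$. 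Summing the squares over $i$ gives the factor $\sqrt{m}$ in \eqref{eq: compression error}, since $\norm{x_i}_\infty \leq \norm{x}_\infty$.

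For the main term we use a row/column (Cauchy--Schwarz) argument in the algebra, which is where the hypothesis $\sum_j a_j \leq 1$ enters. Write $d_j = a_j - b_j$, which is self-adjoint. View $D = (d_1, \dots, d_k)$ as a row operator and $C = (x_i a_1, \dots, x_i a_k)^T$ as a column, so that $\sum_j d_j x_i a_j = DC$. Then
\begin{equation*}
\norm{DC}_2 \leq \norm{D}_{L^2\text{-row}} \norm{C}_{\infty},
\end{equation*}
where $\norm{D}_{L^2\text{-row}}^2 = \tau(\sum_j d_j d_j^*)$ and $\norm{C}_\infty^2 = \norm{\sum_j a_j x_i^* x_i a_j} \leq \norm{x_i}_\infty^2 \norm{\sum_j a_j^2}$. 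It remains to control these two quantities. First, $\sum_j a_j^2 \leq \sum_j a_j \leq 1$: each $a_j$ satisfies $0 \leq a_j \leq 1$ because the family sums to at most $1$, and hence $a_j^2 \leq a_j$. Second, $\tau(d_j^2) \leq \norm{d_j}_\infty \norm{d_j}_1 \leq \norm{d_j}_1$ by H\"older, since $\norm{d_j}_\infty \leq 1$ (both $a_j$ and $b_j$ lie between $0$ and $1$). So $\tau(\sum_j d_j^2) \leq \sum_j \norm{d_j}_1$. The second term $\sum_j b_j x_i d_j$ is handled the same way, with $D$ placed as a column on the right and $(b_j x_i)_j$ as a row on the left, now using $\sum_j b_j^2 \leq 1$.

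The only delicate point is justifying the inequality $\norm{DC}_2 \leq \norm{D}_2\norm{C}_\infty$ for row times column. We would realize it inside $M \otimes \mathbb{M}_k$ with trace $\tau \otimes \Tr_k$, using the ordinary noncommutative H\"older inequality $\norm{yz}_2 \leq \norm{y}_2\norm{z}_\infty$. Placing $D$ in the first row and $C$ in the first column, the product $DC$ sits in the $(1,1)$ entry. Since the trace on the matrix factor is unnormalized here, the $2$-norms match exactly, and no constants are lost. Alternatively, one can verify the inequality directly: writing $\norm{\sum_j d_j c_j}_2 = \sup_{\norm{z}_2 \leq 1} |\sum_j \tau(z^* d_j c_j)|$, apply Cauchy--Schwarz in $j$ and in $\tau$. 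We expect this step to be the only real obstacle, and it is routine.
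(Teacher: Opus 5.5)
Your proof is correct and reaches the same constant, but by a different mechanism from the paper's.

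\emph{The paper's route.} It never splits the difference into two $L^2$ pieces. It works entirely at the endpoints:
\begin{itemize}
\item it bounds $\norm{\mathcal{E}_a(x)-\mathcal{E}_b(x)}_1 \le 2\norm{x}_\infty \sum_j \norm{a_j-b_j}_1$ by the plain triangle inequality over $j$ plus H\"older;
\item it bounds $\norm{\mathcal{E}_a(x)-\mathcal{E}_b(x)}_\infty \le 2\norm{x}_\infty$ because $\mathcal{E}_a$ and $\mathcal{E}_b$ are subunital completely positive maps;
\item it interpolates once, via $\norm{y}_2 \le \norm{y}_1^{1/2}\norm{y}_\infty^{1/2}$.
\end{itemize}

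\emph{Your route.} You interpolate on each $d_j$ separately, via $\tau(d_j^2)\le \norm{d_j}_1$. You then use the row/column H\"older inequality in $M\otimes\mathbb{M}_k$, so the sum over $j$ is controlled in $\ell^2$ rather than $\ell^1$.

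\emph{What each buys.} The paper's argument is lighter: no amplification, just two endpoint estimates. Yours proves the formally stronger intermediate bound
\[
\norm{\mathcal{E}_a(x)-\mathcal{E}_b(x)}_2 \le 2\sqrt{m}\Bigl(\sum_{j=1}^k \norm{a_j-b_j}_2^2\Bigr)^{1/2}\norm{x}_\infty .
\]
This bound needs only $\sum_j a_j^2\le 1$ and $\sum_j b_j^2\le 1$. It implies the stated estimate because $\norm{a_j-b_j}_2^2\le\norm{a_j-b_j}_1$ when $\norm{a_j-b_j}_\infty\le 1$. It would also let one feed in $L^2$ control of $a_j-b_j$ directly. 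That is relevant in the later perturbation estimate, where the paper first bounds $\norm{a_j-b_j}_2$ and then passes to $\norm{a_j-b_j}_1\le\norm{a_j-b_j}_2$.

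Both arguments rest on the same operator inequality, $\sum_j a_j y^*y a_j \le \norm{y}_\infty^2\sum_j a_j^2\le\norm{y}_\infty^2$. In the paper it makes $\mathcal{E}_a$ contractive in operator norm; in your proof it bounds $\norm{C}_\infty$.
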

	
	\begin{proof}
		First, consider the case $m = 1$.  By the triangle inequality,
		\begin{align}
			\norm{\mathcal{E}_a(x) - \mathcal{E}_b(x)}_1 &\leq \sum_{j=1}^k \norm{a_jxa_j - b_jxb_j}_1 \\
			&\leq \sum_{j=1}^k \left( \norm{a_jx(a_j - b_j)}_1 + \norm{(a_j - b_j)xb_j}_1 \right) \\
			&\leq 2 \norm{x}_\infty \sum_{j=1}^k \norm{a_j - b_j}_1.
		\end{align}
		Since $\mathcal{E}_a$ is subunital and completely positive, $\norm{\mathcal{E}_a(x)}_\infty \leq \norm{x}_\infty$.  Thus, by H{\"o}lder's inequality,
		\begin{align}
			\norm{\mathcal{E}_a(x) - \mathcal{E}_b(x)}_2 &\leq \norm{\mathcal{E}_a(x) - \mathcal{E}_b(x)}_1^{1/2} \norm{\mathcal{E}_a(x) - \mathcal{E}_b(x)}_\infty^{1/2} \\
			&\leq \left( 2 \norm{x}_\infty \sum_{j=1}^k \norm{a_j - b_j}_1 \right)^{1/2} (2 \norm{x}_\infty)^{1/2} \\
			&= 2 \left( \sum_{j=1}^k \norm{a_j - b_j}_1 \right)^{1/2} \norm{x}_\infty.
		\end{align}
		The case for general $m > 1$ follows by summing the squares of the individual coordinates.
	\end{proof}
	
	\begin{corollary}
		Consider the setup of Lemma \ref{lem: building approximate projections}.  Fix $y \in (\mathbb{M}_n)_{\sa}$.  Then for $x \in (\mathbb{M}_n)_{\sa}^m$, we have
		\begin{equation} \label{eq: lower bound for psi by smoothed compression}
			\psi_k^{(n)}(x) \geq \norm*{ \sum_{j=1}^k f_{L,j}(y) x f_{L,j}(y)}_2 - 2 \sqrt{m} \left( 3 \sum_{j=1}^k \left| \frac{1}{k} - \tr_n(f_{L,j}(y)) \right| + \frac{k}{n} \right)^{1/2} \norm{x}_\infty.
		\end{equation}
	\end{corollary}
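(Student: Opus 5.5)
The corollary follows by combining Lemma \ref{lem: building approximate projections} with the compression error estimate \eqref{eq: compression error}. The plan is to exhibit a single competitor in the supremum defining $\psi_k^{(n)}$ and then compare it with the smoothed compression.

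First, apply Lemma \ref{lem: building approximate projections} to the given $y$. This produces projections $p^{\star} = (p_j^{\star})_{j=1}^k$ satisfying \eqref{eq: p star properties}. These conditions say exactly that $p^{\star} \in \mathcal{P}_k^{(n)}$ (Notation \ref{not: partition test}). Hence, by the definition \eqref{eq: def of psi},
\[
\psi_k^{(n)}(x) \geq \norm{\mathcal{E}_{p^{\star}}(x)}_2 .
\]

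Second, compare $\mathcal{E}_{p^{\star}}$ with $\mathcal{E}_a$, where $a_j = f_{L,j}(y)$.

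For the lemma giving \eqref{eq: compression error} to apply, both families must consist of positive operators with sum at most $1$.
\begin{itemize}
\item For $p^{\star}$ this is immediate, since the $p_j^{\star}$ are projections summing to $1$.
\item For $a$, each $f_{L,j}$ takes values in $[0,1]$, and the functions $f_{L,j}$ have disjoint supports (Observation \ref{obs: quantiles}). Hence $\sum_j f_{L,j} \leq 1$ pointwise. By functional calculus, $a_j \geq 0$ and $\sum_j a_j \leq 1$.
\end{itemize}

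The lemma then yields
\[
\norm{\mathcal{E}_{p^{\star}}(x) - \mathcal{E}_a(x)}_2 \leq 2\sqrt{m}\Big(\sum_j \norm{p_j^{\star} - f_{L,j}(y)}_1\Big)^{1/2}\norm{x}_\infty .
\]
Inserting \eqref{eq: p star error} into the square root and using monotonicity of $t \mapsto t^{1/2}$ gives the error term in \eqref{eq: lower bound for psi by smoothed compression}.

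Third, the reverse triangle inequality gives
\[
\norm{\mathcal{E}_{p^{\star}}(x)}_2 \geq \norm{\mathcal{E}_a(x)}_2 - \norm{\mathcal{E}_{p^{\star}}(x) - \mathcal{E}_a(x)}_2 .
\]
Combining this with the first step yields \eqref{eq: lower bound for psi by smoothed compression}.

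There is no real obstacle. The only point needing care is the hypothesis $\sum_j a_j \leq 1$ for the smoothed family, which rests on the disjointness of the supports of the $f_{L,j}$ (including the unbounded end intervals $I_1$ and $I_k$).
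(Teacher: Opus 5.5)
Your proposal is correct and matches the paper's proof: take the projections $p^{\star}$ from Lemma \ref{lem: building approximate projections} as a competitor in the supremum defining $\psi_k^{(n)}$, apply the reverse triangle inequality, and bound $\norm{\mathcal{E}_{p^{\star}}(x) - \mathcal{E}_a(x)}_2$ using \eqref{eq: compression error} and \eqref{eq: p star error}. You also verify that $a_j = f_{L,j}(y) \geq 0$ and $\sum_j a_j \leq 1$ via the disjoint supports, which is a hypothesis of the compression lemma that the paper leaves implicit.
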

	
	\begin{proof}
		Let $a_j = f_{L,j}(y)$.  Let $p_j^{\star}$ be given by Lemma \ref{lem: building approximate projections}, and note that $p^{\star} = (p_1^{\star},\dots,p_k^{\star})$ is an element of $\mathcal{P}_k^{(n)}$, and hence
		\begin{equation}
		\psi_k^{(n)}(x) \geq \norm{\mathcal{E}_{p^{\star}}(x)}_2 \geq \norm{\mathcal{E}_a(x)}_2 - \norm{\mathcal{E}_{p^{\star}}(x) - \mathcal{E}_a(x)}_2.
		\end{equation}
		We can then estimate $\norm{\mathcal{E}_{p^{\star}}(x) - \mathcal{E}_a(x)}_2$ using \eqref{eq: compression error} and substitute \eqref{eq: p star error} to control $\sum_{j=1}^k \norm{a_j - p_j^{\star}}_1$.
	\end{proof}
	
	Next, we estimate the smoothed compression associated to $Y_{m+1}(0)$ applied to $Y_{m+1}(t)$.  In particular, this involves analyzing the error in replacing $Y(0) = X(t_0)$ with $X(0) = X$.
	
	\begin{lemma} \label{lem: main example smoothed compression}
		Suppose Assumptions \ref{ass: main example setup}, \ref{ass: setup for Y}.  Then
		\begin{equation} \label{eq: main example smoothed compression}
			\norm*{ \sum_{j=1}^m f_{L,j}(Y_{m+1}(0)) Y(t) f_{L,j}(Y_{m+1}(0)) }_2 \\
			\geq \sqrt{m+1} e^{-t/2} \left( e^{-t_0/2} \left(1 - 2\sqrt{\frac{2k}{\pi L}} \right) - 4 \sqrt{kLt_0^{1/2}} \right).
		\end{equation}
	\end{lemma}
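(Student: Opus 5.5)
The plan is to peel off the pieces of $Y(t)$ one at a time until we reach the exactly commuting tuple $X$, where the compression can be computed explicitly. We write $a_j = f_{L,j}(Y_{m+1}(0))$ (the sum in the statement runs over the $k$ cutoff functions) and $b_j = f_{L,j}(X_{m+1})$. Here the $\lambda_j$ are the $k$-quantiles of the semicircle law, so Observation \ref{obs: quantiles} applies with $R = 2$ and $C = 1/\pi$. We also write $\mathcal{E}_a(x) = \sum_j a_j x a_j$ as in the compression lemma.

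\emph{Step 1: discard the $Z$-part.} We have
\[
Y(t) = e^{-t/2} X(t_0) + (1-e^{-t})^{1/2} Z,
\]
and $a_j$ and $X(t_0)$ lie in $\mathrm{W}^*(X,S)$, which is free from $Z$. Each cross term $\tau(a_j X_i(t_0) a_j \, a_l Z_i a_l)$ has the form $\tau(w Z_i)$ with $w \in \mathrm{W}^*(X,S)$, so it vanishes because $Z_i$ is centered and free from $w$. Hence $\mathcal{E}_a(X(t_0))$ and $\mathcal{E}_a(Z)$ are orthogonal in $L^2$, and
\[
\norm{\mathcal{E}_a(Y(t))}_2 \geq e^{-t/2} \norm{\mathcal{E}_a(X(t_0))}_2 .
\]
This produces the overall factor $e^{-t/2}$, and it remains to bound $\norm{\mathcal{E}_a(X(t_0))}_2$ from below by $\sqrt{m+1}\bigl(e^{-t_0/2}(1 - 2\sqrt{2k/(\pi L)}) - 4\sqrt{kLt_0^{1/2}}\bigr)$.

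\emph{Step 2: the ideal commuting case.} Since $X_{m+1}$ commutes with every $X_i$, we have $\mathcal{E}_b(X) = g X$ with $g = \sum_j f_{L,j}(X_{m+1})^2$, a function of $X_{m+1}$ with $0 \leq g \leq 1$. Hence
\[
\norm{X - \mathcal{E}_b(X)}_2^2 = \sum_i \tau\bigl(X_i^2 (1-g)^2\bigr).
\]
For $i \leq m$, tensor independence factors each term as $\tau(X_i^2)\,\tau((1-g)^2) \leq \tau(1-g) = \int (1 - \sum_j f_{L,j}^2)\,d\mu$. By \eqref{eq: function integral deficit}, together with the analogous estimate for the two outer intervals, this is at most of order $2k \cdot 2C/L = 4k/(\pi L)$. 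For $i = m+1$ we instead bound $X_{m+1}^2 \leq 4$ (or use the semicircle density directly), and this term is absorbed since it is only one of $m+1$ coordinates. This should give
\[
\norm{\mathcal{E}_b(X)}_2 \geq \sqrt{m+1}\bigl(1 - 2\sqrt{2k/(\pi L)}\bigr),
\]
possibly after adjusting how constants are bookkept.

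\emph{Step 3: perturbation from $X$ to $X(t_0)$ and from $b$ to $a$.} We write
\[
X(t_0) = e^{-t_0/2} X + (1-e^{-t_0})^{1/2} S
\]
and split
\[
\mathcal{E}_a(X(t_0)) = e^{-t_0/2}\mathcal{E}_b(X) + \bigl[\mathcal{E}_a(X(t_0)) - \mathcal{E}_b(X(t_0))\bigr] + (1-e^{-t_0})^{1/2}\mathcal{E}_b(S).
\]
The last term has norm at most $t_0^{1/2}\norm{S}_2 = t_0^{1/2}\sqrt{m+1}$, since $\mathcal{E}_b$ is a contraction on $L^2$. For the middle term we apply \eqref{eq: compression error} coordinatewise. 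Since $f_{L,j}$ is $L$-Lipschitz, the $\norm{\cdot}_1$ differences are controlled by
\[
\sum_j \norm{a_j - b_j}_1 \leq k L \norm{Y_{m+1}(0) - X_{m+1}}_2 \lesssim 2kL t_0^{1/2},
\]
using $\norm{Y_{m+1}(0) - X_{m+1}}_2 \leq (1 - e^{-t_0/2}) + (1-e^{-t_0})^{1/2} \leq 2t_0^{1/2}$. Combined with $\norm{X(t_0)}_\infty \leq 2e^{-t_0/2} + 2(1-e^{-t_0})^{1/2} \approx 2$, this gives a bound of order $\sqrt{m+1}\sqrt{kL t_0^{1/2}}$. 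Adding these errors and absorbing the $t_0^{1/2}$ term into the $4\sqrt{kLt_0^{1/2}}$ term (using $kL \geq 1$ and $t_0 \leq 1$) yields \eqref{eq: main example smoothed compression}.

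The main obstacle I expect is the $i = m+1$ coordinate in Step 2, together with exact constant bookkeeping. For that coordinate tensor factorization is unavailable, so we will bound $\tau(X_{m+1}^2(1-g)^2)$ directly using the semicircle density near the quantile endpoints. The other delicate point is applying the compression lemma per coordinate while keeping only an operator-norm factor of about $2$, so that the $\sqrt{m+1}$ scaling comes from summing the squares of the $m+1$ coordinates rather than from $\norm{x}_\infty$.
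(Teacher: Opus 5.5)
Your route is the paper's: discard the $Z$-part by freeness, swap $f_{L,j}(X_{m+1}(t_0))$ for $f_{L,j}(X_{m+1})$ via the $\norm{\cdot}_2$-Lipschitz property of functional calculus and \eqref{eq: compression error}, and use exact commutation plus \eqref{eq: function integral deficit} in the ideal case. Steps 1 and 2 are fine. The coordinate $i=m+1$ that you flag as the main obstacle is harmless. The paper bounds every coordinate by $\norm{X_i(1-g)}_2 \le \norm{X_i}_\infty \norm{1-g}_2 = 2\norm{1-g}_2$, with $\norm{1-g}_2^2 \le \tau(1-g) \le 2kC/L = 2k/(\pi L)$, and this gives exactly the term $2\sqrt{2k/(\pi L)}$. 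Each of the $k$ intervals loses at most $2C/L$, so your count $2k\cdot 2C/L$ is too large by a factor $2$. Your tensor-factorization bound still suffices for $m \ge 2$.

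The actual gap is in Step 3. With the estimates you write, the errors do not fit under $4\sqrt{(m+1)kLt_0^{1/2}}$, so the closing claim ``adding these errors and absorbing\dots yields the statement'' is false. Your triangle-inequality bound $\norm{X_{m+1}(t_0)-X_{m+1}}_2 \le 2t_0^{1/2}$ gives $\sum_j \norm{a_j-b_j}_1 \le 2kLt_0^{1/2}$. Then \eqref{eq: compression error} only bounds the middle term by $2\sqrt{m+1}\,(2kLt_0^{1/2})^{1/2}\norm{X(t_0)}_\infty$. This is $4\sqrt{2}\sqrt{(m+1)kLt_0^{1/2}}$ even with the sharp value $\norm{X(t_0)}_\infty = 2$, and your bound $2e^{-t_0/2}+2(1-e^{-t_0})^{1/2}$ can be as large as $2\sqrt{2}$. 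The triangle-inequality treatment of the $S$-term adds a further $\sqrt{m+1}\,t_0^{1/2}$. The slack you have in Step 2 cannot compensate, since it is proportional to the independent quantity $\sqrt{k/L}$.

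Three sharpenings, all used in the paper, recover the constant $4$:
\begin{enumerate}[(i)]
\item Since $X_{m+1}$ and $S_{m+1}$ are free and centered, hence orthogonal, $\norm{X_{m+1}(t_0)-X_{m+1}}_2^2 = (1-e^{-t_0/2})^2+(1-e^{-t_0}) = 2(1-e^{-t_0/2}) \le t_0$.
\item $\norm{X(t_0)}_\infty = 2$ exactly. Each $X_i(t_0)$ is a combination of two free standard semicirculars with squared coefficients summing to $1$, hence is standard semicircular.
\item The $S$-term should be dropped by orthogonality, exactly as in your Step 1. The $b_j$ and $X$ lie in $\mathrm{W}^*(X)$, which is free from $S$, so $\mathcal{E}_b(X) \perp \mathcal{E}_b(S)$ and $\norm{\mathcal{E}_b(X(t_0))}_2 \ge e^{-t_0/2}\norm{\mathcal{E}_b(X)}_2$. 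No additive $t_0^{1/2}$ term then appears.
\end{enumerate}
With these, the swap error is $2\sqrt{m+1}\,(kLt_0^{1/2})^{1/2}\cdot 2 = 4\sqrt{(m+1)kLt_0^{1/2}}$. Combining with Step 2 gives \eqref{eq: main example smoothed compression} exactly.

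Separately, you use without comment that $\norm{f(A)-f(B)}_2 \le L\norm{A-B}_2$ for $L$-Lipschitz $f$. This holds for $\norm{\cdot}_2$, though not for the operator norm, and deserves a reference, which the paper supplies.
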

	
	\begin{proof}
		First, recall that $Y(t) = e^{-t/2} Y(0) + (1 - e^{-t})^{1/2} \tilde{S}$ where $\tilde{S}$ is a free semicircular family.  A small computation with free independence shows that
		\begin{equation} \label{eq: orthogonality 1}
			\sum_{j=1}^m f_{L,j}(Y_{m+1}(0)) Y(0) f_{L,j}(Y_{m+1}(0)) \perp \sum_{j=1}^m f_{L,j}(Y_{m+1}(0)) \tilde{S} f_{L,j}(Y_{m+1}(0)).
		\end{equation}
		Therefore,
		\begin{equation} \label{eq: orthogonality 1b}
			\norm*{ \sum_{j=1}^m f_{L,j}(Y_{m+1}(0)) Y(t) f_{L,j}(Y_{m+1}(0)) }_2 \geq e^{-t/2} \norm*{ \sum_{j=1}^m f_{L,j}(Y_{m+1}(0)) Y(0) f_{L,j}(Y_{m+1}(0)) }_2
		\end{equation}
		so it suffices to prove \eqref{eq: main example smoothed compression} in the case that $t = 0$.  Now recall that $Y(0) \sim X(t_0) = e^{-t_0/2} X + (1 - e^{-t_0})^{1/2} S$, so it suffices to show that
		\begin{equation} \label{eq: main example smoothed compression 2}
			\norm*{ \sum_{j=1}^m f_{L,j}(X_{m+1}(t_0)) X(t_0) f_{L,j}(X_{m+1}(t_0)) }_2 \\
			\geq \sqrt{m+1} \left( e^{-t_0/2} \left(1 - 2\sqrt{\frac{2k}{\pi L}} \right) - 4 \sqrt{kLt_0^{1/2}} \right).
		\end{equation}
		Now because $f_{L,j}$ is $L$-Lipschitz, we have by \cite[Lemma 2.1]{Jekel2026FIG} that
		\begin{align}
			\norm{f_{L,j}(X_{m+1}(t_0)) - f_{L,j}(X_{m+1}(0))}_2 &\leq L \norm{X_{m+1}(t_0) - X_{m+1}(0)}_2 \\
			&= L \sqrt{(1 - e^{-t_0}) + (1 - e^{-t_0/2})^2} \\
			&= L \sqrt{2(1 - e^{-t_0/2})} \leq L t_0^{1/2}.
		\end{align}
		Let $a_j = f_{L,j}(X_{m+1}(0))$ and $b_j = f_{L,j}(X_{m+1}(t_0))$.  Thus, by \eqref{eq: compression error} and $\norm{a_j - b_j}_1 \leq \norm{a_j - b_j}_2 \leq Lt_0^{1/2}$, we get
		\begin{align} \label{eq: perturbation error}
			\norm*{ \sum_{j=1}^k b_j X(t_0) b_j - \sum_{j=1}^k a_j X(t_0) a_j }_2 &\leq 2 \sqrt{m+1} \left( \sum_{j=1}^k \norm{b_j - a_j}_1 \right)^{1/2} \norm{X(t_0)}_\infty \\
			&\leq 4 \sqrt{(m+1)kLt_0^{1/2}}.
		\end{align}
		Now since $a_j = f_{L,j}(X_{m+1}(0))$ and $X(0)$ are freely independent of $S$, we apply the same reasoning as \eqref{eq: orthogonality 1}, \eqref{eq: orthogonality 1b} and obtain
		\begin{equation} \label{eq: orthogonality 2}
			\norm*{\sum_{j=1}^k a_j X(t_0) a_j}_2 \geq 
			e^{-t_0/2} \norm*{\sum_{j=1}^k a_j X(0) a_j}_2.
		\end{equation}
		Now $a_j$ commutes with $X_1(0)$, \dots, $X_{m+1}(0)$.  Thus, 
		\begin{equation} \label{eq: commutation}
			\norm*{\sum_{j=1}^k a_j X(0) a_j}_2 = \norm*{X(0) \sum_{j=1}^m a_j^2}_2 \geq \norm{X(0)}_2 - \sqrt{m+1} \norm{X(0)}_\infty \norm*{1 - \sum_{j=1}^k a_j^2 }_2.
		\end{equation}
		Moreover, since $0 \leq \sum_j f_{L,j}^2 \leq 1$, we have $0 \leq \sum_j a_j^2 \leq 1$,
		\begin{equation} \label{eq: function integration deficit 2}
			\norm*{1 - \sum_{j=1}^k a_j^2 }_2^2 \leq \norm*{1 - \sum_{j=1}^k a_j^2 }_1 \leq \int_{-2}^2 \left( 1 - \sum_{j=1}^k f_{L,j}^2 \right)\,d\mu \leq \frac{2k}{\pi L}
		\end{equation}
		using \eqref{eq: function integral deficit} with $\mu$ the semicircular measure and $C = 1/\pi$.  Hence, \eqref{eq: commutation} becomes 
		\begin{equation} \label{eq: commutation 2}
			\norm*{\sum_{j=1}^k a_j X(0) a_j}_2 \geq \sqrt{m+1} \left(1 - 2\sqrt{\frac{2k}{\pi L}} \right).
		\end{equation}
		Combining \eqref{eq: perturbation error}, \eqref{eq: orthogonality 2}, and \eqref{eq: commutation 2} yields \eqref{eq: main example smoothed compression 2}, and hence finishes the proof.
	\end{proof}
	
	\begin{lemma}
		Suppose Assumptions \ref{ass: main example setup}, \ref{ass: setup for Y}, \ref{ass: matrix models for Y}.   Let $k \in \N$.  Then
		\begin{equation} \label{eq: lower bound on expectation of psi}
			\liminf_{n \to \infty} \mathbb{E} \psi_k^{(n)}(Y^{(n)}(t)) \geq \sqrt{m+1} \left( e^{-(t_0+t)/2} - 4e^{-t/2}k^{3/4} t_0^{1/4} - \frac{12}{\sqrt{k}} \right).
		\end{equation}
	\end{lemma}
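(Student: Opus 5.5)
The plan is to apply the Corollary (inequality \eqref{eq: lower bound for psi by smoothed compression}) pointwise, with $y = Y_{m+1}^{(n)}(0)$ and $x = Y^{(n)}(t)$. This gives a lower bound for $\psi_k^{(n)}(Y^{(n)}(t))$ by a smoothed compression minus an error term. I would then pass each piece to the large-$n$ limit using asymptotic freeness, and compare with Lemma \ref{lem: main example smoothed compression}. The cutoff slope $L$ is a free parameter, chosen at the end as a function of $k$ (and possibly $t_0$) to produce the stated error terms.

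\emph{Step 1: convergence of the main term.} By Assumption \ref{ass: matrix models for Y} and Theorem \ref{thm: asymptotic freeness}, $(Y^{(n)}(0),Z^{(n)})$ converges in noncommutative law in probability to $(Y(0),Z)$. The operator norms are bounded: $\norm{Y^{(n)}(0)}_\infty \leq 3$, and $\norm{Z^{(n)}}_\infty \leq 2+\delta$ with overwhelming probability by Theorem \ref{thm: GUE operator norm bound}. On that event, each $f_{L,j}$ can be uniformly approximated by polynomials on a fixed interval (Weierstrass). Hence
\[
\norm*{\textstyle\sum_j f_{L,j}(Y^{(n)}_{m+1}(0))\, Y^{(n)}(t)\, f_{L,j}(Y^{(n)}_{m+1}(0))}_2 \quad\text{and}\quad \tr_n f_{L,j}(Y^{(n)}_{m+1}(0))
\]
converge in probability to their free counterparts. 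The first limit is bounded below by Lemma \ref{lem: main example smoothed compression}. Since everything is uniformly bounded (or has exponential moments, by the GUE norm tail bound), convergence in probability upgrades to convergence of expectations of the truncated quantities. Fatou's lemma, or dominated convergence, then handles the $\liminf$.

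\emph{Step 2: the trace deficit in the limit.} I will show that
\[
\sum_j \Bigl| \tfrac1k - \tau\bigl(f_{L,j}(X_{m+1}(t_0))\bigr)\Bigr| \leq \frac{2k}{\pi L} + k L t_0^{1/2}.
\]
This uses \eqref{eq: function integral deficit} for the semicircle measure ($C = 1/\pi$) applied to $X_{m+1}(0)$. It also uses the perturbation estimate $\norm{f_{L,j}(X_{m+1}(t_0)) - f_{L,j}(X_{m+1}(0))}_1 \leq L t_0^{1/2}$, exactly as in the proof of Lemma \ref{lem: main example smoothed compression}. The $k/n$ term vanishes as $n\to\infty$. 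For the error term, $\norm{Y^{(n)}(t)}_\infty \leq 3 + 2 + o(1)$ with high probability, so the error in \eqref{eq: lower bound for psi by smoothed compression} is at most a constant times
\[
\sqrt{m+1}\,\Bigl(\tfrac{6k}{\pi L} + 3kLt_0^{1/2}\Bigr)^{1/2}.
\]
On the complement event I simply use $\psi_k^{(n)} \geq 0$. Its probability decays exponentially, and the main term has bounded second moment, so this event contributes nothing in the limit.

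\emph{Step 3: choosing $L$.} Combining Steps 1 and 2 gives a lower bound of $\sqrt{m+1}$ times
\[
e^{-(t_0+t)/2} - O\bigl(\sqrt{k/L}\bigr) - O\bigl(e^{-t/2}\sqrt{kLt_0^{1/2}}\bigr) - O\bigl(\sqrt{kLt_0^{1/2}}\bigr).
\]
I would take $L$ polynomial in $k$, e.g.\ $L \approx k^2$, so that every $\sqrt{k/L}$ contribution becomes $O(k^{-1/2})$ and collects into $12/\sqrt k$. The $t_0$-dependent terms then collect into the $k^{a}t_0^{1/4}$ term.

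The main obstacle I expect is this final bookkeeping. The exponent of $k$ in the $t_0$ term and the constant $12$ depend delicately on the choice of $L$. They also depend on whether the $\norm{x}_\infty$ factor in \eqref{eq: compression error} is bounded by $5$ or handled more carefully, e.g.\ by bounding only the $Y^{(n)}(0)$ part in operator norm and treating the GUE part through its $2$-norm and the orthogonality \eqref{eq: orthogonality 1}. I would first try $L = k^2$, then adjust (possibly letting $L$ depend on $t_0$) to match the stated form exactly.
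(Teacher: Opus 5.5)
Your route is the paper's route. You apply \eqref{eq: lower bound for psi by smoothed compression} with $y=Y^{(n)}_{m+1}(0)$ and $x=Y^{(n)}(t)$, pass the smoothed compression to the limit by asymptotic freeness and Fatou, bound that limit by Lemma~\ref{lem: main example smoothed compression}, control the error term in the limit, and take $L=k^2$.

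\textbf{The gap is in Step 2.} Your bound there is valid but lossy, and the perturbation back to $X_{m+1}(0)$ is unnecessary. Indeed $Y_{m+1}(0)\sim X_{m+1}(t_0)=e^{-t_0/2}X_{m+1}+(1-e^{-t_0})^{1/2}S_{m+1}$ is a free sum of centered semicirculars with variances $e^{-t_0}$ and $1-e^{-t_0}$, so it is itself \emph{exactly} standard semicircular. The $\lambda_j$ are therefore its own quantiles, and \eqref{eq: function integral deficit} with $C=1/\pi$ gives $\sum_j|1/k-\tau(f_{L,j}(Y_{m+1}(0)))|\le 2k/(\pi L)$ with no $t_0$-dependence. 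This is what the paper uses.

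Your perturbation instead puts $3kLt_0^{1/2}$ under the square root in the Corollary's error term. That error is multiplied by $\norm{Y^{(n)}(t)}_\infty$, which is of order one for every $t$. The result is an extra error of order $\sqrt{m+1}\,(kLt_0^{1/2})^{1/2}=\sqrt{m+1}\,k^{3/2}t_0^{1/4}$ (for $L=k^2$), with no factor $e^{-t/2}$. This term does not appear in the statement, and no choice of $L$ removes it. Once you use exact semicircularity, the only $t_0$-dependence comes from Lemma~\ref{lem: main example smoothed compression}, which carries the factor $e^{-t/2}$.

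\textbf{Two bookkeeping points remain.} First, the operator-norm bound $3+2$ is too crude for the constant $12$: even with the exact deficit bound it gives about $15.4/\sqrt{k}$. The paper uses $\mathbb{E}[A^{1/2}B]\le(\mathbb{E}A)^{1/2}(\mathbb{E}B^2)^{1/2}$ together with $\limsup_n\mathbb{E}\norm{Y^{(n)}(t)}_\infty^2\le(3e^{-t/2}+2(1-e^{-t})^{1/2})^2\le 13$. After that, the $k^{-1/2}$ terms total $2(\sqrt{78}+\sqrt{2})/\sqrt{\pi k}<12/\sqrt{k}$. Your good-event argument works equally well if you use $3e^{-t/2}+(2+\delta)(1-e^{-t})^{1/2}\le\sqrt{13}+\delta$.

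Second, re-tuning $L$ cannot produce $k^{3/4}$. Keeping the $\sqrt{k/L}$ terms of order $k^{-1/2}$ forces $L\gtrsim k^2$. Then the term $4e^{-t/2}\sqrt{kLt_0^{1/2}}$ from Lemma~\ref{lem: main example smoothed compression} is at least of order $e^{-t/2}k^{3/2}t_0^{1/4}$. The paper's proof substitutes $L=k^2$ into exactly this term and writes $k^{3/4}t_0^{1/4}$, but $\sqrt{k\cdot k^2\cdot t_0^{1/2}}=k^{3/2}t_0^{1/4}$. The exponent $3/4$ therefore looks like an arithmetic slip. What this argument actually establishes — the paper's version, or yours once Step 2 is fixed — is \eqref{eq: lower bound on expectation of psi} with $k^{3/2}$ in place of $k^{3/4}$.
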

	
	\begin{proof}
		Let $L = k^2$, and apply \eqref{eq: lower bound for psi by smoothed compression},
		\begin{align}
			\mathbb{E} \psi_k^{(n)}(Y^{(n)}(t)) &\geq \mathbb{E} \norm*{ \sum_{j=1}^m f_{L,j}(Y_{m+1}^{(n)}(0)) Y^{(n)}(t) f_{L,j}(Y_{m+1}^{(n)}(0))}_2 \label{eq: number 1} \\
			& \qquad - 2 \sqrt{m+1} \mathbb{E} \left[ \left( 3 \sum_{j=1}^k \left| \frac{1}{k} - \tr_n(f_{L,j}(Y_{m+1}^{(n)}(0))) \right| + \frac{k}{n} \right)^{1/2} \norm{Y^{(n)}(t)}_\infty \right]. \label{eq: number 2}
		\end{align}
		We first estimate the right-hand side of \eqref{eq: number 1}.  Recall that $Y^{(n)}(0)$ is assumed to be bounded in operator norm by $3$ and $Z^{(n)}$ is a GUE $(m+1)$-tuple which satisfies the tail bound Theorem \ref{thm: GUE operator norm bound}.  Thus, we have that $\limsup_{n \to \infty} \norm{Y^{(n)}(t)}_\infty$ is bounded almost surely.  Moreover, by the asymptotic freeness theorem, $(Y^{(n)}(0),Z^{(n)})$ converges in noncommutative law to $(Y(0),\tilde{S})$ in probability, so in particular $(Y^{(n)}(0),Y^{(n)}(t))$ converges in noncommutative law in probability.  This implies that
		\begin{equation} \label{eq: number 3}
			\lim_{n \to \infty} \norm*{ \sum_{j=1}^m f_{L,j}(Y_{m+1}^{(n)}(0)) Y^{(n)}(t) f_{L,j}(Y_{m+1}^{(n)}(0))}_2 = \norm*{ \sum_{j=1}^m f_{L,j}(Y_{m+1}(0)) Y(t) f_{L,j}(Y_{m+1}(0)) }_2 \text{ in probability}.
		\end{equation}
		Therefore, by Fatou's lemma (which holds for convergence in probability, for instance by passing to almost surely convergent subsequences),
		\begin{equation} \label{eq: number 4}
			\liminf_{n \to \infty} \mathbb{E} \norm*{ \sum_{j=1}^m f_{L,j}(Y_{m+1}^{(n)}(0)) Y^{(n)}(t) f_{L,j}(Y_{m+1}^{(n)}(0))}_2 \geq \norm*{ \sum_{j=1}^m f_{L,j}(Y_{m+1}(0)) Y(t) f_{L,j}(Y_{m+1}(0)) }_2,
		\end{equation}
		and here note that we have a lower bound for the right-hand side in \eqref{eq: main example smoothed compression}, so we have finished estimating the first term \eqref{eq: number 1}.
		
		To estimate \eqref{eq: number 2}, we first apply Cauchy-Schwarz,
		\begin{align}
			&\mathbb{E} \left[ \left( 3 \sum_{j=1}^k \left| \frac{1}{k} - \tr_n(f_{L,j}(Y_{m+1}(0))) \right| + \frac{k}{n} \right)^{1/2} \norm{Y(t)}_\infty \right] \\ \leq& \left( 3 \sum_{j=1}^k \mathbb{E} \left| \frac{1}{k} - \tr_n(f_{L,j}(Y_{m+1}^{(n)}(0))) \right| + \frac{k}{n} \right)^{1/2} \label{eq: number 5} \\
			&\qquad \cdot \mathbb{E}[\norm{Y^{(n)}(t)}_\infty^2]^{1/2}. \label{eq: number 6}
		\end{align}
		Similarly to \eqref{eq: number 3}, convergence in law implies that
		\begin{equation} \label{eq: number 7}
			\lim_{n \to \infty} \left| \frac{1}{k} - \tr_n(f_{L,j}(Y_{m+1}^{(n)}(0))) \right| = \left| \frac{1}{k} - \tau(f_{L,j}(Y_{m+1}(0))) \right| \text{ in probability.}
		\end{equation}
		Since $f_{L,j}$ is bounded, we can apply the bounded convergence theorem (for convergence in probability) to obtain
		\begin{equation} \label{eq: number 8}
			\lim_{n \to \infty} \mathbb{E} \left| \frac{1}{k} - \tr_n(f_{L,j}(Y_{m+1}^{(n)}(0))) \right| = \left| \frac{1}{k} - \tau(f_{L,j}(Y_{m+1}(0))) \right|.
		\end{equation}
		The sum over $j$ of the right-hand side can be bounded similarly to \eqref{eq: function integration deficit 2} using \eqref{eq: function integral deficit}, and hence our final estimate for \eqref{eq: number 5} becomes
		\begin{equation} \label{eq: number 9}
			\lim_{n \to \infty} \left( 3 \sum_{j=1}^k \mathbb{E} \left| \frac{1}{k} - \tr_n(f_{L,j}(Y_{m+1}^{(n)}(0))) \right| \right)^{1/2} \leq \sqrt{\frac{6k}{\pi L}} = \frac{\sqrt{6}}{\sqrt{\pi k}},
		\end{equation}
		where we substituted our choice of $L = k^3$.  To estimate \eqref{eq: number 6}, use the triangle inequality
		\begin{align} \label{eq: number 10}
			\mathbb{E}[\norm{Y^{(n)}(t)}_\infty^2]^{1/2} \leq e^{-t/2} \mathbb{E}[\norm{Y^{(n)}(t)}_\infty^2]^{1/2} + (1 - e^{-t})^{1/2} \mathbb{E}[\norm{Z^{(n)}}_\infty^2]^{1/2}.
		\end{align}
		We assumed that $\norm{Y^{(n)}(t)}_\infty \leq 3$, and due to the tail bounds for GUE matrices, we have $\lim_{n \to \infty} \mathbb{E}[\norm{Z^{(n)}}_\infty^2] = 4$, so
		\begin{equation} \label{eq: number 11}
			\limsup_{n \to \infty} \mathbb{E}[\norm{Y^{(n)}(t)}_\infty^2] \leq [3e^{-t/2} + 2(1 - e^{-t})^{1/2}]^2 \leq [3^2 + 2^2][e^{-t} + (1 - e^{-t})] = 13.
		\end{equation}
		Combining all these results, \eqref{eq: number 1} is estimated by \eqref{eq: number 4} and \eqref{eq: number 2} is estimated by \eqref{eq: number 5} and \eqref{eq: number 6}, which in turn are estimated by \eqref{eq: number 9} and \eqref{eq: number 11}, and therefore
		\begin{align}
			\liminf_{n \to \infty} \mathbb{E} \psi_k^{(n)}(Y^{(n)}(t)) &\geq \norm*{ \sum_{j=1}^m f_{L,j}(Y_{m+1}(0)) Y(t) f_{L,j}(Y_{m+1}(0)) }_2 - 2 \sqrt{m+1} \frac{\sqrt{78}}{\sqrt{\pi k}}.
		\end{align}
		Now we combine this with \eqref{eq: main example smoothed compression}, again evaluating $L = k^2$ to get the lower bound
		\begin{equation}
		\sqrt{m+1} \left( e^{-(t_0+t)/2} \left(1 - \frac{2\sqrt{2}}{\sqrt{\pi} k} \right) - 4e^{-t/2}k^{3/4} t_0^{1/4} - \frac{2\sqrt{78}}{\sqrt{\pi k}} \right).
		\end{equation}
		Finally, estimate
		\begin{equation}
		e^{-(t_0+t)/2} \frac{2\sqrt{2}}{\sqrt{\pi k}} + \frac{2\sqrt{78}}{\sqrt{\pi} k} \leq \frac{2}{k} \frac{\sqrt{78} + \sqrt{2}}{\sqrt{\pi k}} \leq \frac{12}{\sqrt{k}}. \qedhere
		\end{equation}
	\end{proof}
		
	\subsection{Lower bound on Wasserstein distance}
	
	Finally, we deduce a lower bound on the Wasserstein distance of the distributions of $Y^{(n)}(t)$ and $S^{(n)}$.
	
	\begin{proposition}
	With the setup of Assumptions \ref{ass: main example setup}, \ref{ass: setup for Y}, \ref{ass: matrix models for Y} and Notation \ref{not: partition test}, we have
	\begin{equation} \label{eq: final difference of expectations}
	\liminf_{n \to \infty} \left[ \mathbb{E} \psi_k^{(n)}(Y^{(n)}(t)) - \mathbb{E} \psi_k^{(n)}(S^{(n)}) \right] \geq \sqrt{m+1} \left( e^{-(t_0+t)/2} - 4e^{-t/2}k^{3/4} t_0^{1/4} - \frac{13}{\sqrt{k}} \right) - 1 - \sqrt{2 + \log(m+1)}.
	\end{equation}
	\end{proposition}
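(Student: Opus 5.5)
This follows by combining the two expectation bounds already proved: the lower bound \eqref{eq: lower bound on expectation of psi} for $\mathbb{E}\psi_k^{(n)}(Y^{(n)}(t))$ and the upper bound of Lemma \ref{lem: epsilon net} for the Gaussian term. Here $S^{(n)}$ is a GUE $(m+1)$-tuple, so Lemma \ref{lem: epsilon net} is applied with $m+1$ in place of $m$; its hypothesis $m+1 \geq 2$ holds.

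First, use the superadditivity of $\liminf$ and $\liminf(-b_n) = -\limsup b_n$:
\begin{equation}
\liminf_{n\to\infty}\left[\mathbb{E}\psi_k^{(n)}(Y^{(n)}(t)) - \mathbb{E}\psi_k^{(n)}(S^{(n)})\right] \geq \liminf_{n\to\infty}\mathbb{E}\psi_k^{(n)}(Y^{(n)}(t)) - \limsup_{n\to\infty}\mathbb{E}\psi_k^{(n)}(S^{(n)}).
\end{equation}
The first term is bounded below by \eqref{eq: lower bound on expectation of psi}.

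For the second term, \eqref{eq: final expectation estimate} with $m+1$ variables gives, for every $n$,
\begin{equation}
\mathbb{E}\psi_k^{(n)}(S^{(n)}) \leq \sqrt{m+1}\left(\tfrac1k+\tfrac1n\right)^{1/2} + 1 + \sqrt{2+\log(m+1)}.
\end{equation}
Taking $\limsup_{n\to\infty}$ sends $\frac1n \to 0$, so
\begin{equation}
\limsup_{n\to\infty}\mathbb{E}\psi_k^{(n)}(S^{(n)}) \leq \sqrt{m+1}\,k^{-1/2} + 1 + \sqrt{2+\log(m+1)}.
\end{equation}

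Subtracting, the $\sqrt{m+1}\,k^{-1/2}$ term merges with the $-12\sqrt{m+1}/\sqrt{k}$ from \eqref{eq: lower bound on expectation of psi} to give $-13\sqrt{m+1}/\sqrt{k}$. The remaining terms $-1-\sqrt{2+\log(m+1)}$ appear exactly as in \eqref{eq: final difference of expectations}.

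The only point needing care is consistency of dimensions: $\psi_k^{(n)}$ here acts on $(m+1)$-tuples, and Lemma \ref{lem: epsilon net} and its net argument apply verbatim with $m+1$ variables. No real obstacle is expected; the proposition is bookkeeping.
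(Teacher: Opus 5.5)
Your proposal is correct and matches the paper's proof: apply \eqref{eq: final expectation estimate} with $m+1$ in place of $m$, let $n \to \infty$ so the $1/n$ term vanishes, and subtract the result from the lower bound \eqref{eq: lower bound on expectation of psi}, absorbing $\sqrt{m+1}\,k^{-1/2}$ into the $12/\sqrt{k}$ term to obtain $13/\sqrt{k}$. Your explicit step $\liminf(a_n - b_n) \geq \liminf a_n - \limsup b_n$ only spells out what the paper leaves implicit.
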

	
	\begin{proof}
	We apply the upper bound \eqref{eq: final expectation estimate} for $\mathbb{E} \psi_k^{(n)}(S^{(n)})$ with $m + 1$ rather than $m$, and take $n \to \infty$.  We combine this with the lower bound for $\liminf_{n \to \infty} \mathbb{E} \psi_k^{(n)}(Y^{(n)})$ from \eqref{eq: lower bound on expectation of psi}.
	\end{proof}
	
	\begin{proposition} \label{prop: Wasserstein lower bound}
		With the same setup as in the previous proposition, we have the following lower bound on the Wasserstein distance.
		\begin{equation} \label{eq: Wasserstein lower bound}
			\liminf_{n \to \infty} W_2(Y^{(n)}(t),S^{(n)}) \geq \sqrt{m+1} e^{-(t_0+t)/2} - 4(m+1)^{3/4} e^{-t/2} t_0^{1/4} - 14 - \sqrt{2 + \log(m+1)}.
		\end{equation}
	\end{proposition}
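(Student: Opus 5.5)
The plan is to combine the difference-of-expectations bound \eqref{eq: final difference of expectations} with the $1$-Lipschitz property \eqref{eq: psi Lipschitz estimate 3} of $\psi_k^{(n)}$, and then choose $k$ to balance the error terms.

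First, for any coupling $(A,B)$ of $Y^{(n)}(t)$ and $S^{(n)}$, \eqref{eq: psi Lipschitz estimate 3} together with Cauchy--Schwarz gives
\begin{equation*}
\mathbb{E}\psi_k^{(n)}(Y^{(n)}(t)) - \mathbb{E}\psi_k^{(n)}(S^{(n)}) = \mathbb{E}[\psi_k^{(n)}(A) - \psi_k^{(n)}(B)] \leq \mathbb{E}\norm{A-B}_2 \leq (\mathbb{E}\norm{A-B}_2^2)^{1/2}.
\end{equation*}
Taking the infimum over couplings shows that the left-hand side is at most $W_2(Y^{(n)}(t),S^{(n)})$. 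This holds for every $k$. Taking $\liminf_{n\to\infty}$ and applying \eqref{eq: final difference of expectations} then yields, for every $k \in \N$,
\begin{equation*}
\liminf_{n\to\infty} W_2(Y^{(n)}(t),S^{(n)}) \geq \sqrt{m+1}\left(e^{-(t_0+t)/2} - 4e^{-t/2}k^{3/4}t_0^{1/4} - \frac{13}{\sqrt{k}}\right) - 1 - \sqrt{2+\log(m+1)}.
\end{equation*}

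Next, choose $k = m+1$. The middle error term becomes $\sqrt{m+1}\cdot 4e^{-t/2}(m+1)^{3/4}t_0^{1/4}$. The last error term becomes $\sqrt{m+1}\cdot 13/\sqrt{m+1} = 13$, which combines with the $-1$ to give $-14$. This matches the constant in \eqref{eq: Wasserstein lower bound}.

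The main obstacle is the exponent in the middle term. The choice $k = m+1$ produces $4(m+1)^{5/4}e^{-t/2}t_0^{1/4}$, whereas \eqref{eq: Wasserstein lower bound} states $4(m+1)^{3/4}e^{-t/2}t_0^{1/4}$. So I would first recheck the bookkeeping in the lower-bound lemma, in particular the choice $L=k^2$ versus $L=k^3$ and how $\sqrt{m+1}$ was factored out of the $t_0$-term. If the intended factor $(m+1)^{3/4}$ is meant to be multiplied by $\sqrt{m+1}$, the argument is exactly as above. Otherwise the stated bound should read $(m+1)^{5/4}$.

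Either discrepancy is harmless downstream, because $t_0 \leq (m+1)^{-3}$ makes this term $O\bigl((m+1)^{1/2}e^{-t/2}\bigr)$ times a small factor. Beyond fixing this exponent, no further work is needed: the statement holds for all $t \geq 0$, and one can substitute $m+1$ directly.
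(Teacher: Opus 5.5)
Your argument is the paper's argument. You use that $\psi_k^{(n)}$ is $1$-Lipschitz for $\norm{\cdot}_2$, pass from an arbitrary coupling to $W_2$, and substitute $k=m+1$ into \eqref{eq: final difference of expectations}. The paper's proof says ``$2$-Lipschitz'', but the inequality it writes uses constant $1$, as you do. Your exponent check is also correct. That substitution gives $4(m+1)^{5/4}e^{-t/2}t_0^{1/4}$, not $4(m+1)^{3/4}e^{-t/2}t_0^{1/4}$. No single $k$ yields both the stated middle term and the constant $14$: matching the middle term forces $k=(m+1)^{1/3}$, which turns $13$ into $13(m+1)^{1/3}$. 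So the displayed statement contains a slip, and the paper's proof has the same slip.

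Your claim that the discrepancy is harmless downstream is wrong. Under $t_0\le(m+1)^{-3}$ you only get $(m+1)^{3/4}t_0^{1/4}\le 1$. So the error term $4(m+1)^{5/4}e^{-t/2}t_0^{1/4}$ can be as large as $4\sqrt{m+1}\,e^{-t/2}$, which is four times the main term $\sqrt{m+1}\,e^{-(t_0+t)/2}$. At $t_0=(m+1)^{-3}$ the corrected right-hand side is negative.

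The paper's final estimate depends on the erroneous exponent. There the term becomes $2^{1/2}(m+1)^{1/2}t_0^{1/4}\le 2$. So the hypothesis $t_0\le(m+1)^{-3}$ in Theorem \ref{thm: main Fisher}, and \eqref{eq: main condition on t} in Theorem \ref{thm: main entropy}, must be shrunk. For example, $t_0\le 2^{-12}(m+1)^{-3}$ gives $4(m+1)^{3/4}t_0^{1/4}\le 1/2$, at the cost of worse constants in \eqref{eq: main condition on m}.

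The recheck you propose makes things worse, not better. With $L=k^2$, the term $4\sqrt{kLt_0^{1/2}}$ from Lemma \ref{lem: main example smoothed compression} equals $4k^{3/2}t_0^{1/4}$, not the $4k^{3/4}t_0^{1/4}$ written in \eqref{eq: lower bound on expectation of psi}. So the honest version of this proposition carries $4(m+1)^{2}e^{-t/2}t_0^{1/4}$, and the conclusion then needs something like $t_0\le 2^{-12}(m+1)^{-6}$. The qualitative results survive, since $t_0$ can always be taken smaller, but the stated quantitative thresholds do not.
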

	
	\begin{proof}
		Recall that $\psi$ is $2$-Lipschitz.  Let $\tilde{Y} \sim Y^{(n)}(t)$ and $\tilde{S} \sim S^{(n)}$ on the same probability space.  Then
		\begin{equation}
		\mathbb{E} \psi_k^{(n)}(\tilde{Y}) - \mathbb{E} \psi_k^{(n)}(\tilde{S}) \leq \mathbb{E} \norm{\tilde{Y} - \tilde{S}}_2.
		\end{equation}
		Optimizing over the coupling,
		\begin{equation}
		\mathbb{E} \psi_k^{(n)}(Y^{(n)}(t)) - \mathbb{E} \psi_k^{(n)}(S^{(n)}) \leq W_2(Y^{(n)}(t),S^{(n)}).
		\end{equation}
		We then apply the lower bound \eqref{eq: final difference of expectations} for the difference of expectations and substitute $k = m+1$.
	\end{proof}
	
	\section{Conclusion of the main proofs} \label{sec: conclusion}
	
	Finally, we put together our upper estimate for free Fisher information with our results in the classical setting.  Throughout the section, Assumptions \ref{ass: main example setup}, \ref{ass: setup for Y}, \ref{ass: matrix models for Y} are in force.
	
	\begin{proof}[Proof of Theorem \ref{thm: main Fisher}]
	First, we rewrite \eqref{eq: example free Fisher upper bound} for $X(t)$ with $a = e^{-t/2}$ and $b = (1 - e^{-t})^{1/2}$:
	\begin{equation} \label{eq: example free Fisher upper bound 2}
		\Phi_g^*(X(t)) \leq (1 - e^{-t})^{-1} \left[ \frac{2^8 e^{-4t} (m^{1/2} + 1)^4}{(1 - e^{-t})^2 - 2^4 e^{-2t}(m^{1/2} + 1)^2} \right] \text{ when } 4e^{-t}(1 + m^{1/2}) \leq 1 - e^{-t}.
	\end{equation}
	Consequently, applying this to $Y(t) \sim X(t_0 + t)$,
	\begin{equation} \label{eq: example free Fisher upper bound 3}
		\Phi_g^*(Y(t)) \leq (1 - e^{-(t+t_0)})^{-1} \left[ \frac{2^8 e^{-4(t+t_0)} (m^{1/2} + 1)^4}{(1 - e^{-(t+t_0)})^2 - 2^4 e^{-2(t+t_0)}(m^{1/2} + 1)^2} \right].
	\end{equation}
	whenever $4e^{-(t+t_0)}(1 + m^{1/2}) \leq 1 - e^{-(t+t_0)}$ or $5 + 4m^{1/2} \leq e^{t+t_0}$.  In particular, suppose that
	\begin{equation} \label{eq: choice of t}
		t = 3 \log 2 + \log(1 + m^{1/2}) + t', \text{ where } t' \geq 0.
	\end{equation}
	Then using monotonicity and algebraic manipulations
	\begin{equation}
		\Phi_g^*(Y(t)) \leq \frac{8}{7} \left[ \frac{2^{-4} e^{-4t'}}{(1 - 2^{-3} e^{-t'})^2 - 2^{-2} e^{-2t'}} \right] \leq \frac{8}{7} \left[ \frac{2^{-4} e^{-4t'}}{1 - 2^{-2} e^{-t'} + 2^{-6} e^{-2t'} - 2^{-2} e^{-2t'}} \right] \leq \frac{1}{7} e^{-4t'}.
	\end{equation}
	Next, we obtain a lower bound on the limit of classical Fisher information for the matrix models $Y^{(n)}(t)$.  By the information-cost inequality \eqref{eq: normalized info cost},
	\begin{align}
	\liminf_{n \to \infty} \mathcal{I}_g^{(n)}(Y^{(n)}(t))^{1/2} &\geq \liminf_{n \to \infty} W_2(Y^{(n)}(t), S^{(n)}) \\
	&\geq \sqrt{m+1} e^{-(t_0+t)/2} - 4(m+1)^{3/4} e^{-t/2} t_0^{1/4} - 14 - \sqrt{2 + \log(m+1)}. \label{eq: lower Wasserstein bound conclusion}
	\end{align}
	Now we take
	\begin{equation} \label{eq: choice of t0}
		0 < t_0 \leq \frac{1}{(m+1)^3} \leq \frac{1}{8}
	\end{equation}
	Plugging in \eqref{eq: choice of t} and \eqref{eq: choice of t0} and $\sqrt{m+1} \geq (1 + \sqrt{m})/\sqrt{2}$, the right-hand side becomes
	\begin{align}
		\eqref{eq: lower Wasserstein bound conclusion} &\geq (m+1)^{1/4} 2^{-7/4} e^{-t_0/2} e^{-t'/2} - 2^{1/2} (m+1)^{1/2} t_0^{1/4} - 14 - \sqrt{2 + \log(m+1)} \\
		&\geq \frac{1}{4} (m+1)^{1/4} e^{-t'/2} - 16 - \sqrt{2 + \log(m+1)}.
	\end{align}
	Finally, we observe that
	\begin{equation}
		\Phi_g^*(Y(t))^{1/2} \leq \frac{1}{\sqrt{7}} e^{-2t'} < \frac{1}{4} (m+1)^{1/4} e^{-t'/2} - 16 - \sqrt{2 + \log(m+1)},
	\end{equation}
	provided that
	\begin{equation}
		t' \leq \frac{1}{4} \log (m+1), \qquad \frac{1}{4} (m+1)^{1/8} \geq 17 + \sqrt{2 + \log(m+1)}.
	\end{equation}
	Hence, for the asserted range of $t$ in Theorem \ref{thm: main Fisher}, we have $\Phi_g^*(Y(t))^{1/2} < \liminf_{n \to \infty} \mathcal{I}^{(n)}(Y^{(n)}(t))^{1/2}$.  By Observation \ref{obs: subtract converging second moment}, this yields also $\Phi^*(Y(t)) < \liminf_{n \to \infty} \mathcal{I}^{(n)}(Y^{(n)}(t))$.  This yields the asserted separation \eqref{eq: main Fisher}.
	\end{proof}
	
	\begin{proof}[Proof of Theorem \ref{thm: main entropy}]
	Note that $\chi(X(t_0)) > -\infty$ by \cite[Theorem 3.9]{Voiculescu1998}.  The conclusion of Theorem \ref{thm: main Fisher} shows a separation between the classical Fisher information of $Y^{(n)}(t)$ and the free Fisher information of $Y(t)$ on a set of positive measure.  By the same time-substitution as in Observation \ref{obs: entropy conversion}, this means that $Y(0) + t^{1/2}Z$ and $Y^{(n)}(0) + t^{1/2} Z^{(n)}$ exhibit a separation of Fisher information for $t$ in a set of positive measure.  Applying Proposition \ref{prop: classical versus free entropy strict} to $Y(0) = X(t_0)$ completes the proof.
	\end{proof}
	
	\newpage
	
	\bibliographystyle{plain}
	\bibliography{free-entropy}
	
\end{document}